\newtheorem{theorem}{Theorem}[section]
\newtheorem{lemma}[theorem]{Lemma}
\newtheorem{proposition}[theorem]{Proposition}
\newtheorem{corollary}[theorem]{Corollary}
\theoremstyle{definition}
\newtheorem{example}[theorem]{Example}
\newtheorem{remark}[theorem]{Remark}
\newtheorem{setting}[theorem]{Setting}
\newtheorem{algorithm}[theorem]{Algorithm}
\theoremstyle{remark}
\def\aa{\ensuremath{\mathfrak{a}}}
\def\bb{\ensuremath{\mathfrak{b}}}
\def\TT{\mathbb{T}}
\def\KK{\mathbb{K}}
\def\ZZ{\mathbb{Z}}
\def\PP{\mathbb{P}}
\def\QQ{\mathbb{Q}}
\def\<{\langle}
\def\>{\rangle}
\def\cox{\mathcal{R}}
\newcommand{\thickhline}{%
    \noalign {\ifnum 0=`}\fi \hrule height 1pt
    \futurelet \reserved@a \@xhline
}
\renewcommand{\phi}{\varphi}
\def\quot{/\!\!/}
\def\rq#1{\widehat{#1}}
\def\t#1{\widetilde{#1}}
\def\b#1{\overline{#1}}
\def\bangle#1{\langle #1 \rangle}
\def\KK{{\mathbb K}}
\def\TT{{\mathbb T}}
\def\ZZ{{\mathbb Z}}
\def\QQ{{\mathbb Q}}
\def\PP{{\mathbb P}}
\def\XX{{\mathbb X}}
\def\FF{{\mathbb F}}
\def\Cox{\cox}
\def\id{{\rm id}}
\def\Mov{{\rm Mov}}
\def\Cl{\operatorname{Cl}}
\def\Pic{\operatorname{Pic}}
\def\Spec{{\rm Spec}}
\def\Proj{{\rm Proj}}
\def\lin{{\rm lin}}
\def\rank{\operatorname{rank}}
\def\Fa{\FF_a}
\def\ZZZ{\ZZ_{\geq 0}}
\def\KT#1{\KK[T_1,\ldots,T_{#1}]}
\def\laurant#1{\KK[T_1^{\pm 1},\ldots,T_{#1}^{\pm 1}]}
 \author[J.~Hausen, S.~Keicher and A.~Laface]{J\"urgen~Hausen, Simon~Keicher and Antonio~Laface}
 \address{Mathematisches Institut, Universit\"at T\"ubingen,
Auf der Morgenstelle 10, 72076 T\"ubingen, Germany}
\email{juergen.hausen@uni-tuebingen.de}
\address{Mathematisches Institut, Universit\"at T\"ubingen,
Auf der Morgenstelle 10, 72076 T\"ubingen, Germany}
\email{keicher@mail.mathematik.uni-tuebingen.de}
\address{Departamento de Matem\'atica, Universidad de Concepci\'on,
Casilla 160-C, Concepci\'on, Chile}
\email{alaface@udec.cl}
\title[Computing Cox rings]{Computing Cox rings}
\subjclass[2000]{
14L24, 14L30, 14C20, 14Q10, 14Q15, 13A30, 52B55\\
The second author was partially supported 
by the DFG Priority Program SPP 1489.
The third author was partially supported 
by Proyecto FONDECYT Regular N. 1110096.
}
\begin{document}

\begin{abstract}
We consider modifications, for example blow ups, 
of Mori dream spaces and provide algorithms 
for investigating the effect on the Cox ring,
for example verifying finite generation or 
computing an explicit presentation in terms of 
generators and relations.
As a first application, we compute the Cox rings 
of all Gorenstein log del Pezzo surfaces of Picard 
number one.
Moreover, we show computationally that all smooth 
rational surfaces of Picard number at most six 
are Mori dream surfaces and we provide 
explicit presentations of the Cox ring for those
not admitting a torus action.
Finally, we provide the Cox rings of projective 
spaces blown up at certain special point 
configurations.
\end{abstract}

\maketitle

\section{Introduction}

Generalizing the well known construction of the 
homogeneous coordinate ring of a toric variety~\cite{cox}, 
one associates to any normal complete variety $X$ 
defined over an algebraically closed field $\KK$ 
of characteristic zero and having a finitely generated 
divisor class group $\Cl(X)$ its {\em Cox ring\/}
$$ 
\mathcal{R}(X)
\ = \ 
\bigoplus_{\Cl(X)} \Gamma(X,\mathcal{O}_X(D)).
$$
A characteristic feature of the Cox ring is its 
divisibility theory: it allows unique factorization 
in the multiplicative monoid of homogeneous 
elements~\cite{Ar,BeHa0,Ha2}.
Projective varieties with finitely generated 
Cox ring are called {\em Mori dream spaces\/}~\cite{HuKe}.
Such a Mori dream space is determined by its 
Cox ring up to a finite choice of  
possible  Mori chambers, which in turn 
correspond to GIT quotients of the action 
of $H = \Spec\, \KK[\Cl(X)]$ 
on the total coordinate space $\Spec\, \mathcal{R}(X)$.
In the surface case, the Cox ring $\mathcal{R}(X)$ 
even completely encodes~$X$. 
Once the Cox ring of a variety $X$ is 
known in terms of generators and relations, 
this opens an approach to the explicit study of $X$.
For example, in~\cite{BreBrDe}, Manin's conjecture 
was proven for the $E_6$-singular surface using 
such a presentation for its Cox ring.
Explicit Cox ring computations are often based on 
a detailed knowledge of the geometry of the 
underlying variety;
the pioneer work in this direction concerns
(generalized) smooth del Pezzo surfaces and is 
due to Batyrev/Popov~\cite{BaPo} and 
Hassett/Tschinkel~\cite{HaTs}.

The aim of the present paper is to provide 
computational methods for Cox rings not 
depending on a detailed geometric understanding 
of the underlying varieties.
We consider modifications $X_2 \to X_1$ of projective 
varieties, where one of the associated Cox rings 
$R_1$ and $R_2$ is explicitly given in terms of generators 
and relations,
for example, $X_1$ might be a projective space and 
$X_2 \to X_1$ a sequence of blow ups.
Whereas $R_1$ can be directly determined from $R_2$, 
see Proposition~\ref{prop:coximage}, the problem of computing 
$R_2$ from $R_1$ is in general delicate; even finite generation 
can get lost.

Our approach uses the technique of toric ambient 
modifications developed in~\cite{BaHaKe,Ha2} 
and upgraded in Section~\ref{sec:ambmod} according 
to our computational needs.
The idea is to realize $X_2 \to X_1$ via a 
modification of toric varieties $Z_2 \to Z_1$, where 
$X_1 \subseteq Z_1$ is embedded in a compatible way,
which means in particular that $R_1$ is the 
factor ring of the Cox ring of $Z_1$ by the 
$\Cl(X_1)$-homogeneous ideal describing $X_1$.
Prospective homogeneous generators $f_j$ for 
$R_2$ correspond either to exceptional divisors 
or can be encoded as prime divisors on $X_1$.
The general basic Algorithms~\ref{algo:stretchcemds} 
and~\ref{algo:modifycemds} check via the algebraic 
criteria~\ref{thm:ambientblow} and~\ref{rem:K1primcrit} 
if a given guess of $f_j$ indeed generates the Cox ring
$R_2$ and, in the affirmative case, compute the defining 
ideal of relations for $R_2$. 
The main computational issues are a saturation
process to compute the $\Cl(X_2)$-homogeneous 
ideal $I_2$ of the proper transform $X_2$ in the 
Cox ring $\mathcal{R}(Z_2)$ and $\Cl(X_2)$-primality tests 
of the $f_j \in \mathcal{R}(Z_2)/I_2$ to verify that $\mathcal{R}(Z_2)/I_2$ 
is the Cox ring $R_2$ of $X_2$ according to~\ref{thm:ambientblow} 
and~\ref{rem:K1primcrit}.

As a first application, we consider in 
Section~\ref{section:picnr1} the
Gorenstein log del Pezzo surfaces~$X$
of Picard number one; see~\cite[Theorem~8.3]{AlNi}
for a classification in terms of the singularity 
type.
The toric ones correspond to the reflexive lattice 
triangles, see for example~\cite{Koe}, and their 
Cox rings are directly obtained by~\cite{cox}.
The Cox rings of the nontoric $X$ allowing 
still a $\KK^*$-action have been determined 
in~\cite{HaSu}.
In Theorem~\ref{thm:gorensteinlogpezzos},
we provide the Cox rings for the remaining cases;
that means for the $X$ admitting no nontrivial 
torus action.
The approach is via a presentation 
$\PP_2 \leftarrow \t{X} \to X$, where $\t{X}$ is smooth.
From~\cite{AGL,Der,HaTs} we infer enough information 
on the generators 
of the Cox ring of $\t{X}$ for computing the Cox ring 
of $X$ by means of our algorithms; 
we note that an explicit computation of the Cox ring
of $\t{X}$ is not needed (and in fact was not always 
feasible on our systems).

The ``lattice ideal method'' presented in
Section~\ref{section:latticeideal}
produces systematically generators for 
the Cox ring of the blow up $X_2$ of a given 
Mori dream space $X_1$ at a subvariety
$C \subseteq X_1$ contained in the smooth 
locus of $X_1$. 
The theoretical basis for this is 
Proposition~\ref{prop:reesalg} where we 
describe the Cox ring $R_2$ of $X_2$ 
as a saturated Rees algebra defined by 
the $\Cl(X_1)$-homogeneous ideal of the 
center $C \subseteq X_1$ in the Cox ring
$R_1$ of $X_1$. 
Building on this, Algorithm~\ref{algo:latticeideal} 
verifies a given set of prospective 
generators for $R_2$ and, in the affirmative
case computes the ideal of relations of $R_2$; 
a major computational advantage compared 
to the more generally applicable 
Algorithm~\ref{algo:modifycemds} is that the 
involved primality checks are now replaced with 
essentially less complex dimension computations.
The $\ZZ$-grading of $R_2$ given by the Rees 
algebra structure allows to produce systematically 
generators of $R_2$ by computing stepwise 
generators for the $\ZZ$-homogeneous components.
This is implemented in Algorithm~\ref{algo:latticeideal2},
which basically requires the Cox ring
$R_1$ of $X_1$ in terms of generators and relations
and $\Cl(X_2)$-homogeneous generators of the 
ideal of the center of $X_2 \to X_1$. 
It then terminates if and only if the blow up 
$X_2$ is a Mori dream space and in this case, 
it provides the Cox ring $R_2$ of $X_2$.
A sample computation is performed in 
Example~\ref{ex:wpp345}

An application of the lattice ideal method
is given in Section~\ref{sec:smoothrat},
where we investigate smooth rational surfaces $X$ 
of Picard number $\varrho(X) \le 6$.
Using our algorithms, we show in 
Theorem~\ref{thm:pic6} that they 
are all Mori dream surfaces and we provide
the Cox rings for those $X$ that do not admit a 
nontrivial torus action; for the toric $X$ 
one obtains the Cox ring directly 
by~\cite{cox} and for the $X$ with a 
$\KK^*$-action, the methods of~\cite{HaSu}
apply.
Certain blow ups of the projective plane 
have also been considered earlier: 
general point configurations
lead to the smooth del Pezzo surfaces
and almost general ones lead 
to so-called weak del Pezzo surfaces, 
see~\cite{BaPo,Der,Der2,HaTs,StiTeVe}. 
Wheras the remaining blow ups of the plane 
can be settled by our methods in a purely
computational way, the blow ups of Hirzebruch 
surfaces require besides the algorithmic also 
a theoretical treatment.

In Section~\ref{section:lineargen}, we 
consider blow ups of point configurations 
in the projective space. 
Algorithm~\ref{algo:lineargen} tests whether 
the Cox ring is generated by proper transforms 
of hyperplanes and, if so, computes the 
Cox ring.
In Example~\ref{ex:almostfanoplane} we treat
the blow up of the projective plane at a symmetric 
configuration of seven points.
Then we leave the surface case and study blow ups 
of the projective space $\PP_3$ at configurations 
of six distinct points.
Recall that in the case of general position, 
Castravet/Tevelev~\cite{CaTe} determined 
generators of the Cox ring and 
Sturmfels/Xu~\cite{StuXu} the relations,
see also~\cite{StVe}.
Moreover, in~\cite{StuXu} special 
configurations are considered and 
a certain subring of the Cox ring 
is described, compare~\cite[p.~456]{StuXu}.
We obtain in Theorem~\ref{thm:PP3} that 
the blow up of six points not contained 
in a hyperplane is always a Mori dream space 
and we list the Cox rings for the 
\textit{edge-special} configurations, 
i.e.~four points are general 
and at least one of the six lies in two 
hyperplanes spanned by the others.

All our algorithms are stated explicitly and 
will be made available within a software package.
In our computations, we made intensive use of the software 
systems Macaulay2~\cite{M2}, Magma~\cite{magma}, 
Maple~\cite{maple} and Singular~\cite{singular}.
We would like to thank the developers for providing
such helpful tools.
Moreover, we are grateful to Cinzia Casagrande 
for her comments and 
discussions about Section~\ref{section:lineargen}.
Finally, we want to express our sincere thanks to the
referees for many 
valuable hints and helpful suggestions for improving 
our manuscript.

\tableofcontents

\section{Toric ambient modifications}
\label{sec:ambmod}

As indicated before, the ground field
$\KK$ is algebraically closed and of 
characteristic zero throughout the article.
In this section, we provide the necessary 
background on Cox rings, Mori dream spaces 
and their modifications.
Let us begin with recalling notation and basics 
from~\cite{coxrings}.
To any normal complete variety~$X$ with 
finitely generated divisor class group $\Cl(X)$, 
one can associate a {\em Cox sheaf\/} and a 
{\em Cox ring\/}
$$ 
\mathcal{R}
\ := \ 
\bigoplus_{\Cl(X)} \mathcal{O}_X(D),
\qquad\qquad
\mathcal{R}(X)
\ := \ 
\bigoplus_{\Cl(X)} \Gamma(X,\mathcal{O}_X(D)).
$$
The Cox ring $\mathcal{R}(X)$ is {\em factorially 
$\Cl(X)$-graded\/} in the sense that it is integral 
and every nonzero homogeneous nonunit is 
a product of $\Cl(X)$-primes.
Here, a nonzero homogeneous nonunit 
$f \in \mathcal{R}(X)$ is called {\em $\Cl(X)$-prime\/}
if for any two homogeneous $g,h \in \mathcal{R}(X)$
we have that $f \mid gh$ implies $f \mid g$ or 
$f \mid h$.
If the divisor class group $\Cl(X)$ is torsion free, 
then the Cox ring $\mathcal{R}(X)$ is even a UFD in 
the usual sense.

If $\mathcal{R}$ is locally of finite type, 
e.g.~$X$ is $\QQ$-factorial or $X$ is a 
{\em Mori dream space\/}, 
i.e.~a normal projective variety with
$\mathcal{R}(X)$ finitely generated~\cite{HuKe}, 
then one has the relative spectrum 
$\rq{X} := \Spec_X \mathcal{R}$.
The {\em characteristic quasitorus\/}  
$H := \Spec \, \KK[\Cl(X)]$ 
acts on $\rq{X}$ and the canonical map 
$p \colon \rq{X} \to X$ is a good quotient 
for this action.
We call $p \colon \rq{X} \to X$ a 
{\em characteristic space\/} over $X$.
If $X$ is a Mori dream space,
then one has the {\em total coordinate space\/}
$\b{X} :=  \Spec \, \mathcal{R}(X)$ 
and a canonical $H$-equivariant open 
embedding $\rq{X} \subseteq \b{X}$.
Note that the characteristic 
space $p \colon \rq{X} \to X$ coincides with 
the universal torsor introduced by Colliot-Th{\'e}l{\`e}ne
and Sansuc~\cite{CT1,CT2} if and only if $X$ is locally 
factorial in the sense that for every closed 
point $x \in X$ the local ring~$\mathcal{O}_{X,x}$ 
is a UFD.

Given a Mori dream space $X$ and a system 
$\mathfrak{F} = (f_1,\ldots,f_r)$ of pairwise 
non-associated $\Cl(X)$-prime generators of 
the Cox ring $\mathcal{R}(X)$, we can construct
an embedding into a toric variety.
First, with $\b{Z} := \KK^r$, we have a
closed embedding
$$ 
\b{X} \ \to \ \b{Z},
\qquad\qquad
\b{x} \ \mapsto \ (f_1(\b{x}), \ldots, f_r(\b{x})).
$$
This embedding is $H_X$-equivariant, where the 
characteristic quasitorus $H_X$ acts diagonally 
on $\b{Z}$ via the weights 
$\deg(f_i) \in \Cl(X) = \XX(H_X)$.
For any ample class $w \in \Cl(X)$ on $X$,
we obtain a set of $w$-semistable points
$$
\rq{Z} 
\ := \ 
\{\b{z} \in \b{Z}; \; 
f(\b{z}) \ne 0 \text{ for some } f \in \KK[T_1,\ldots,T_r]_{nw}, \, n > 0\}.
$$
The intersection $\rq{X} := \b{X} \cap \b{Z}$ is 
the set of $w$-semistable points for the action of 
$H_X$ on $\b{X}$.
Altogether, this gives rise to a commutative diagram
$$ 
\xymatrix{
{\b{X}}
\ar@{}[r]|\subseteq
&
{\KK^r}
\\
{\rq{X}}
\ar@{}[r]|\subseteq
\ar[d]_{p}
\ar[u]
&
{\rq{Z}}
\ar[d]^{p}
\ar[u]
\\
X
\ar@{}[r]|\subseteq
&
Z
}
$$
where $p \colon \rq{Z} \to Z$ is the toric 
characteristic space~\cite{cox} and we have 
an induced embedding $X \subseteq Z$ of quotients.
Then $\Cl(X) = \Cl(Z)$ holds and $X$ inherits
many geometric properties from $Z$, 
see~\cite[Sec.~III.2.5]{coxrings} for details.
We call $X \subseteq Z$ in this situation a 
{\em compatibly embedded Mori dream space (CEMDS)}.

\begin{remark}
\label{rem:semistab}
Consider a CEMDS $X \subseteq Z$.
Let $Q \colon \ZZ^r \to K := \Cl(Z) = \Cl(X)$ denote
the degree map of the Cox ring 
$\mathcal{R}(Z) = \KK[T_1, \ldots, T_r]$
of the ambient projective toric variety $Z$,
sending the $i$-th canonical basis vector $e_i \in \ZZ^r$ 
to the degree of the $i$-th variable~$T_i$ and 
let $P \colon \ZZ^r \to \ZZ^n$ be the Gale dual
map, i.e.~$P$ is dual to the inclusion 
$\ker(Q) \subseteq \ZZ^r$.
If $w \in \Cl(Z)$ is an ample class of $Z$
and hence for $X$,
then the fans $\rq{\Sigma}$ of $\rq{Z}$ and 
$\Sigma$ of $Z$ are given by 
$$ 
\rq{\Sigma}
\ := \ 
\{\rq{\sigma} \preceq \QQ^{r}_{\ge 0}; \; 
w \in Q(\rq{\sigma}^\perp \cap \QQ^r_{\geq 0})\},
\qquad\qquad
\Sigma^{\max}
\ = \ 
\{P(\rq{\sigma}); \; \rq{\sigma} \in \rq{\Sigma}^{\max} \},
$$
where we write $\preceq$ for the face relation 
of cones and regard $Q$ and $P$ as maps of the 
corresponding rational vector spaces.
If $X \subseteq Z$ is a CEMDS, then the ample 
class $w \in \Cl(Z) = \Cl(X)$ is also an ample 
class for $X$. 
Note that a different choice of the ample class 
$w' \in \Cl(X)$ may lead to another CEMDS 
$X \subseteq Z'$ according to 
the fact that the Mori chamber decomposition of $Z$ 
refines the one of $X$.
\end{remark}

We now consider modifications, that means proper 
birational morphisms $\pi \colon X_2 \to X_1$ of 
normal projective varieties.
A first general statement describes the Cox ring 
of $X_{1}$ in terms of the Cox ring  of~$X_{2}$.
By a morphism of graded algebras $A = \oplus_M A_m$ 
and $B = \oplus_N B_n$ we mean an algebra homomorphism
$\psi \colon A \to B$ together with an accompanying 
homomorphism $\t{\psi} \colon M \to N$ of the grading 
groups such that  $\psi(A_m) \subseteq B_{\t{\psi}(m)}$ 
holds for all $m \in M$.

\begin{proposition}
\label{prop:coximage}
Let $\pi \colon X_2 \to X_1$ be a modification
of normal projective varieties and let 
$C \subseteq X_{1}$ be the center of $\pi$.
Set $K_{i} := \Cl(X_{i})$ and $R_{i} := \mathcal{R}(X_{i})$ 
and identify 
$U := X_2 \setminus \pi^{-1}(C)$ with 
$X_1 \setminus C$.
Then we have canonical surjective push forward maps
$$ 
\t{\pi}_* \colon K_2 \to K_1, \ [D] \mapsto [\pi_*D],
\qquad
\pi_* \colon R_2 \to R_1, 
\ 
(R_2)_{[D]} \ni f \mapsto f_{\vert U} \in (R_1)_{[\t{\pi}_*D]}.
$$
Now suppose that $\mathcal{R}(X_2)$ is finitely 
generated,
let $E_1, \ldots, E_l \subseteq X_2$ denote 
the exceptional prime divisors and 
$f_1, \ldots, f_l \in \mathcal{R}(X_2)$ 
the corresponding canonical sections.
Then we have a commutative diagram
$$ 
\xymatrix{
R_2
\ar[rr]^{\pi_{*}}
\ar@{->>}[dr]_{\lambda}
&&
R_1
\\
&
R_2 / \bangle{f_i-1; \; 1 \le i \le l}
\ar[ur]_{\psi}^{\cong}
&
}
$$
of morphisms of graded algebras, where 
$\lambda$ is the canonical projection 
with the projection 
$\t{\lambda} \colon K_2 \to K_2 / \bangle{\deg(f_1), \ldots, \deg(f_l)}$
as accompanying homomorphism
and the induced morphism $\psi$ is an isomorphism.
\end{proposition}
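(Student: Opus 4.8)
The plan is to obtain both push forward maps from the behaviour of Weil divisors under $\pi$, and then to recognize $R_1$ as the ring $R_2/\bangle{f_i-1}$ obtained from $R_2$ by declaring the canonical sections of the exceptional divisors equal to $1$. The only geometric input needed is that, being a modification of normal varieties, $\pi$ is an isomorphism in codimension one: by the valuative criterion of properness it is even an isomorphism over the generic point of every prime divisor of $X_1$, so the center $C$ has codimension at least two in $X_1$, the open set $U = X_2 \setminus \pi^{-1}(C)$ is identified with $X_1 \setminus C$ via $\pi$, and the codimension one part of $\pi^{-1}(C)$ consists exactly of the exceptional prime divisors $E_1, \ldots, E_l$.

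On divisor class groups I would argue as follows. Push forward of Weil divisors $\pi_* \colon \WDiv(X_2) \to \WDiv(X_1)$ satisfies $\pi_* \div(g) = \div(g)$ for all $g \in \KK(X_2)^* = \KK(X_1)^*$, because $\pi$ is an isomorphism in codimension one, and it kills precisely the divisors supported on $E_1, \ldots, E_l$; hence it descends to $\t{\pi}_* \colon K_2 \to K_1$, which is surjective since every prime divisor of $X_1$ not contained in $C$ is the push forward of its proper transform. If $\sum_j a_j E_j = \div(g)$, then $g$ is a unit on $X_1 \setminus C$, hence on $X_1$ by normality and the codimension bound, hence constant, so the classes $[E_1], \ldots, [E_l]$ are linearly independent. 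Finally, if $\t{\pi}_*[D] = 0$, then after subtracting a principal divisor $\pi_* D = 0$, forcing $D$ to be supported on the $E_j$; combining these facts yields the exact sequence
$$
0 \ \lra \ \bigoplus_{j=1}^{l} \ZZ \, [E_j] \ \lra \ K_2 \ \xrightarrow{\t{\pi}_*} \ K_1 \ \lra \ 0 .
$$

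Next I would construct $\pi_*$ on the Cox rings, working in the framework of \cite{coxrings} so that the shift data needed when $K_i$ has torsion causes no trouble. On a homogeneous piece $(R_2)_{[D]} = \Gamma(X_2, \mathcal{O}_{X_2}(D))$ the map $\pi_*$ is restriction of rational sections to $U$: from $\div(f) + D \ge 0$ one gets $\div(f|_U) + (\t{\pi}_* D)|_U \ge 0$ on $U \cong X_1 \setminus C$, and the codimension bound lets $f|_U$ extend to a section in $\Gamma(X_1, \mathcal{O}_{X_1}(\t{\pi}_* D)) = (R_1)_{\t{\pi}_*([D])}$. This is multiplicative with accompanying homomorphism $\t{\pi}_*$, and it is surjective: given homogeneous $g \in (R_1)_{w_1} = \Gamma(X_1, \mathcal{O}_{X_1}(D_1))$, take $D_2$ to be the proper transform of $D_1$ plus $\sum_j a_j E_j$ with $a_j$ sufficiently large; then $g$, viewed as a rational function on $X_2$, lies in $\Gamma(X_2, \mathcal{O}_{X_2}(D_2)) = (R_2)_{[D_2]}$ with $\t{\pi}_*[D_2] = w_1$ and $\pi_*(g) = g$. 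The canonical section $f_j$ of $E_j$ satisfies $\div(f_j|_U) = E_j|_U = 0$, so $f_j|_U$ is constant, and by the normalization of canonical sections it equals $1$; hence $\pi_*(f_j - 1) = 0$, so $\pi_*$ factors through the canonical projection $\lambda$, whose accompanying homomorphism is the projection $\t{\lambda}$ onto $K_2 / \bangle{[E_1], \ldots, [E_l]}$. By the exact sequence above $\t{\pi}_*$ identifies this quotient with $K_1$, so the induced $\psi$ is a morphism of graded algebras fitting into the asserted triangle.

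It remains to see that $\psi$ is bijective, and this is where the actual content sits. Surjectivity is immediate from surjectivity of $\pi_*$. For injectivity, the key point is that passing to $R_2/\bangle{f_i-1}$ turns the $f_j$ into the unit $1$: each $f_j - 1$ is homogeneous of degree zero for the $K_1$-grading of $R_2$ obtained by coarsening along $\t{\pi}_*$, so $R_2/\bangle{f_i-1}$ is $K_1$-graded, and its degree $w_1$ component is the direct limit of the pieces $(R_2)_{w_2}$ with $\t{\pi}_*(w_2) = w_1$ along the maps given by multiplication with the $f_j$. Writing the $(R_2)_{w_2}$ as spaces of rational sections on $X_2$ and passing to the limit, this component becomes the space of sections of $\mathcal{O}_{X_2}(D')$ over $X_2 \setminus (E_1 \cup \cdots \cup E_l)$, where $D'$ is the proper transform of a divisor of class $w_1$; since this open set differs from $U$ only in codimension at least two and $\mathcal{O}_{X_2}(D')$ restricts under $U \cong X_1 \setminus C$ to $\mathcal{O}_{X_1}(D_1)$, this space equals $(R_1)_{w_1}$, and by construction the resulting isomorphism is $\psi$. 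The main obstacle is exactly this last step — collapsing the $K_2$-grading to the $K_1$-grading and extending sections across the codimension two locus, carried out compatibly with the Cox sheaf formalism and its shift data; everything else is routine divisor calculus for modifications of normal complete varieties.
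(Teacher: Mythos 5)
Your proof is correct, and the construction of the push forward maps together with the observation that each $f_j$ restricts to $1$ on $U$ follows the same lines as the paper (which phrases the restriction-and-extension argument via the divisorial sheaves of \cite[Constr.~4.2.3]{coxrings} so that the shift data for torsion in $\Cl(X_i)$ is handled cleanly). Where you genuinely diverge is in the decisive step, the injectivity of $\psi$. The paper reduces by induction to a single exceptional divisor, proves in Lemma~\ref{lem:K1grad} that $f_1-1$ is $K_1$-prime for the coarsened grading, invokes \cite[Prop.~3.2]{Ha2} to see that $\bangle{f_1-1}$ is radical, and concludes because $\Spec(\psi)$ is an equivariant closed embedding of varieties of the same dimension. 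You instead compute the quotient $R_2/\bangle{f_i-1}$ degree by degree: its $K_1$-degree-$w_1$ piece is the filtered colimit of the spaces $(R_2)_{w_2}$ over the $\ZZ^l$-torsor $\t{\pi}_*^{-1}(w_1)$ (a dehomogenization, i.e.\ the degree-zero part of the localization at $f_1\cdots f_l$), which you identify with $\Gamma(X_2\setminus(E_1\cup\dots\cup E_l),\mathcal{O}_{X_2}(D'))=\Gamma(U,\mathcal{O}_{X_2}(D'))=(R_1)_{w_1}$, using twice that only sets of codimension at least two are removed from a normal variety. Your route requires the linear independence of $[E_1],\dots,[E_l]$ in $K_2$, which you prove and which the paper leaves implicit in asserting that $\t{\lambda}$ identifies $K_2/\bangle{\deg f_1,\ldots,\deg f_l}$ with $K_1$; in exchange it gives a completely explicit description of every graded piece of the quotient and avoids any primality or dimension argument. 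The paper's route isolates the purely algebraic Lemma~\ref{lem:K1grad}, which is of independent use. Both arguments are sound.
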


\begin{lemma}
\label{lem:K1grad}
Let $R$ be a $K_2$-graded domain, 
$f \in R_{w}$ with $w$ of infinite order 
in $K_{2}$ and consider the downgrading 
of $R$ given by $ K_{2} \to K_{1} := K_2/ \bangle{w}$.
Then $f-1$ is $K_1$-prime.
\end{lemma}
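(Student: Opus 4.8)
The plan is to reduce the $K_1$-primality of $f-1$ to an ordinary primality statement in a suitable subring, exploiting that $w$ has infinite order.

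First I would make the standard observation that, since $w$ is of infinite order, the quotient map $K_2 \to K_1 = K_2/\langle w\rangle$ has kernel $\ZZ w \cong \ZZ$, and one can choose a splitting so that $K_2 \cong K_1 \times \ZZ$ as groups, with $w$ corresponding to the generator of the $\ZZ$-factor. Under this identification the $K_2$-grading of $R$ refines the $K_1$-grading, and each $K_1$-homogeneous component $R_u$ (for $u \in K_1$) decomposes as $R_u = \bigoplus_{d \in \ZZ} R_{(u,d)}$, a $\ZZ$-graded piece. In particular $R$ itself becomes a $\ZZ$-graded ring $R = \bigoplus_{d\in\ZZ} R_d$ with $f \in R_1$ (after suitably normalizing the splitting so $\deg_\ZZ f = 1$). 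The element $f-1$ lives in $R_0 \oplus R_1$.

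Next, the key step: I would consider the $\ZZ$-graded localization or, more directly, argue that $R/(f-1)$ is isomorphic as a $K_1$-graded ring to $R_0$ — no, more carefully: the natural map $R_0 \to R/(f-1)$ need not be surjective in general, so instead I would use the "dehomogenization" trick. Pass to the extended ring $R[t^{\pm1}]$ with $t$ in $\ZZ$-degree $-1$ (so $ft$ has $\ZZ$-degree $0$); then $R[f^{-1}]$ is $\ZZ$-graded and its degree-zero part $R[f^{-1}]_0$ satisfies $R[f^{-1}] \cong R[f^{-1}]_0[f^{\pm1}]$, a Laurent polynomial ring over the domain $R[f^{-1}]_0$. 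Hence $R[f^{-1}]/(f-1) \cong R[f^{-1}]_0$, which is a domain; so $(f-1)$ is prime in $R[f^{-1}]$. Since $f$ is a nonzerodivisor (as $R$ is a domain and $f\neq 0$; note $f$ is not a unit either, because it sits in a nontrivial grading degree), primality of $(f-1)$ in $R$ is equivalent to primality in $R[f^{-1}]$ provided $(f-1)$ and $f$ generate the unit ideal locally appropriately — and indeed $f \equiv 1 \bmod (f-1)$, so $f$ becomes a unit in $R/(f-1)$, which gives $R/(f-1) \cong R[f^{-1}]/(f-1) \cong R[f^{-1}]_0$, a domain. Therefore $(f-1)$ is a prime ideal of $R$, i.e. $f-1$ is prime, hence a fortiori $K_1$-prime.

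The main obstacle I anticipate is bookkeeping the grading groups correctly — in particular justifying that the $\ZZ$-grading is well defined with $f$ in degree exactly $1$ (not some higher multiple), which uses that $w$ itself, and not merely a multiple of $w$, is the degree of $f$; and making sure the passage $R/(f-1) \cong R[f^{-1}]_0$ is clean, i.e. that inverting $f$ changes nothing modulo $(f-1)$. Once that is in place, the identification $R[f^{-1}]\cong R[f^{-1}]_0[f^{\pm 1}]$ is the routine fact that a $\ZZ$-graded ring with a unit in degree $1$ is a Laurent polynomial extension of its degree-zero subring, and domain-ness of $R[f^{-1}]_0$ is inherited from $R$. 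I would close by remarking that $K_1$-primality is strictly weaker than primality, so the stronger conclusion we obtained suffices, and this is exactly the input needed to identify $R_2/\langle f_i-1\rangle$ with $R_1$ in Proposition~\ref{prop:coximage}.
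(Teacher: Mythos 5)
Your argument has a genuine gap at its very first step: the short exact sequence $0 \to \ZZ w \to K_2 \to K_1 \to 0$ need \emph{not} split, so the claimed identification $K_2 \cong K_1 \times \ZZ$ and the resulting $\ZZ$-grading of $R$ as a ring (with $f$ in degree $1$) do not exist in general. Take $K_2 = \ZZ$ and $w = 2$: then $K_1 = \ZZ/2\ZZ$ and there is no splitting. Concretely, let $R = \KK[T]$ with $\deg T = 1 \in K_2 = \ZZ$ and $f = T^2 \in R_w$. Your proof would conclude that $R/\bangle{f-1} = \KK[T]/\bangle{T^2-1} \cong \KK \times \KK$ is a domain, i.e.\ that $T^2-1$ is prime — which is false. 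The lemma's actual conclusion, $K_1$-primality, does hold here (one checks that for $\ZZ/2\ZZ$-homogeneous $g$ one has $(T-1) \mid g$ if and only if $(T^2-1)\mid g$), and this example shows that $K_1$-primality is genuinely weaker than primality: your strategy of proving the stronger statement cannot work without additional hypotheses. What is true without a splitting is only that each $K_1$-homogeneous component $R_u$ decomposes as a direct sum of $K_2$-homogeneous pieces whose degrees form a coset of $\ZZ w$ — a $\ZZ$-torsor, with no preferred base point and no multiplicativity of any induced $\ZZ$-degree.

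The paper's proof works precisely with that weaker torsor decomposition: given $(f-1)g = ab$ with $g,a,b$ being $K_1$-homogeneous, it expands each of $g,a,b$ into its $K_2$-homogeneous components (indexed so that consecutive degrees differ by $w$), compares components of the two sides, eliminates the $g_i$ to obtain $\bigl(\sum_i a_i f^{n_1-i}\bigr)\bigl(\sum_j b_j f^{n_2-j}\bigr) = 0$ in the domain $R$, and concludes that $f-1$ divides $a$ or $b$. No splitting is needed because only \emph{differences} of degrees within one $K_1$-class are used. Your localization argument $R/\bangle{f-1} \cong R[f^{-1}]_0$ is correct and clean \emph{when} the sequence splits (for instance when $K_1$ is free), and in that case it even yields ordinary primality; if you want to salvage it you must either add that hypothesis (which would not suffice for the generality in which Proposition~\ref{prop:coximage} uses the lemma) or replace the global $\ZZ$-grading by the componentwise bookkeeping that the paper carries out.
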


\begin{proof}
Let $(f-1)g=ab$, where $g,a,b \in R$ are
$K_1$-homogeneous elements. 
Since $w$ has infinite order, any $K_1$-homogeneous 
element $u \in R$ can be uniquely written as a sum 
$u=u_{0} + \ldots + u_{n}$, 
where each $u_i$ is $K_2$-homogeneous,
both $u_0$ and $u_n$ are non-zero
and $\deg_{K_2}(u_i)=\deg_{K_2}(u_0) + iw$ 
holds for each $i$ such that $u_i$ is non-zero.
According to this observation we write
\begin{eqnarray*}
 (f-1)\sum_{i=0}^ng_i
 & = & 
 \left(\sum_{i=0}^{n_1}a_i\right)
 \left(\sum_{i=0}^{n_2}b_i\right).
\end{eqnarray*}
We have $-g_0=a_0b_0$ since otherwise, 
by equating the $K_2$-homogeneous 
elements of the same degree we would 
obtain either $g_0=0$ or $a_0b_0=0$.
Similarly, we see $fg_n=a_{n_1}b_{n_2}$.
Thus
$$
fg_n  
\ = \ 
a_{n_1}b_{n_2},
\quad
fg_{n-1} -g_{n} 
\ = \ 
a_{n_1}b_{n_2-1}+a_{n_1-1}b_{n_2},
\quad
\ldots
\quad
-g_0  
\ = \ 
a_0b_0.
$$
By an induction argument,
eliminating the $g_{i}$ gives
\begin{eqnarray*} 
 (a_0f^{n_1}+a_1f^{n_1-1}+\dots+a_{n_1}) 
 (b_0f^{n_2}+b_1f^{n_2-1}+\dots+b_{n_2})
&  = &
 0.
\end{eqnarray*}
Since $R$ is integral, one of the
two factors must be zero, say the first one. 
Then $f-1$ divides
$$
a
\ =  \
a_0+\dots+a_{n_1}
\ = \
a_0(1-f^{n_1})+\dots+a_{n_1-1}(1-f).
$$
\end{proof}

\begin{proof}[Proof of Proposition~\ref{prop:coximage}]
Let $x_i \in X_i$ be smooth points with $\pi(x_2) = x_1$ 
such that $x_2$ is not contained in any of the exceptional
divisors. Consider the divisorial sheaf $\mathcal{S}_i$ 
on $X_i$ associated to the subgroup of divisors avoiding 
the point $x_i$, see~\cite[Constr.~4.2.3]{coxrings}.
The open subset $U = X_2 \setminus \pi^{-1}(C) \subseteq X_2$ 
is mapped by $\pi$ isomorphically onto $X_1 \setminus C$.
This leads to 
canonical morphisms of graded algebras
$$ 
\Gamma(X_2, \mathcal{S}_2) 
\ \to \ 
\Gamma(U_2, \mathcal{S}_2) 
\ \to \ 
\Gamma(X_1 \setminus C, \mathcal{S}_1)
\ \to \ 
\Gamma(X_1, \mathcal{S}_1),
$$
where the accompanying homomorphisms of the grading 
groups are the respective push forwards of Weil divisors;
here we use that $C$ is of codimension at least two in~$X_1$
and thus any section of $\mathcal{S}_1$ over $X_1 \setminus C$ 
extends uniquely to a section  of $\mathcal{S}_1$ over~$X_1$.
The homomorphisms are compatible with the relations
of the Cox sheaves $\mathcal{R}_i$, see
again~\cite[Constr.~4.2.3]{coxrings}, and thus induce 
canonical morphisms of graded rings
$$ 
\Gamma(X_2, \mathcal{R}_2) 
\ \to \ 
\Gamma(U_2, \mathcal{R}_2) 
\ \to \ 
\Gamma(X_1 \setminus C, \mathcal{R}_1)
\ \to \ 
\Gamma(X_1, \mathcal{R}_1).
$$
This establishes the surjection  
$\pi_{*} \colon R_2 \to R_1$ with the  
canonical push forward $\t{\pi}_{*}$ of divisor 
class groups as accompanying homomorphism.
Clearly, the canonical sections $f_i$ of the exceptional
divisors are sent to $1 \in R_1$. 

We show that the induced map $\psi$ is an isomorphism.
As we may proceed by induction on $l$, it suffices to 
treat the case $l=1$.
Lemma~\ref{lem:K1grad} tells us that $f_1-1$ is 
$K_1$-prime.
From~\cite[Prop.~3.2]{Ha2} we infer that $\bangle{f_1-1}$ 
is a radical ideal in $R_2$.
Since $\Spec(\psi)$ is a closed embedding of 
varieties of the same dimension and equivariant 
with respect to the action of the 
quasitorus $\Spec \, \KK[K_1]$,
the assertion follows.
\end{proof}

We are grateful to the referee suggesting to us 
the following example as a geometric illustration.

\begin{example}
\label{ex:referee}
Let $X_2$ be the blow-up of 
$\mathbb{P}^2$ at
the three toric fixed points and $p=[1,1,1]$
and let $\pi \colon X_2\to X_1$ be the contraction
of the exceptional curve $E$ over~$p$. The Cox ring 
$R_2$ of $X_2$ is the coordinate ring of the affine 
cone $\b{X}_2$ 
over $G(2,5)$, that means that $R_2$ is $\KK[T_1,\ldots,T_{10}]$ 
modulo the ideal  $I_2$ generated 
by the Pl\"ucker relations
$$
 T_7T_8 - T_6T_9 + T_5T_{10}, \quad
 T_4T_6 - T_3T_7 - T_1T_{10}, \quad
 T_4T_8 - T_3T_9 + T_2T_{10}, 
$$
$$
 T_4T_5 - T_2T_7 - T_1T_9, \qquad
 T_3T_5 - T_2T_6 - T_1T_8.
$$
The $\Cl(X_2)$-grading is the finest one  
leaving variables and relations homogeneous.
We assume $E$ to be $V(T_{10})$ in Cox coordinates. 
According to Proposition~\ref{prop:coximage},
we have an epimorphism 
$R_2 \to R_2/\bangle{T_{10}-1} \cong R_1$ 
onto the Cox ring $R_1$ of $X_1$.
This defines a closed embedding of
$\b{X}_1 = \mathbb{K}^6$ as $\b{X}_2 \cap V(T_{10}-1)$ 
into  $\b{X}_2 \setminus V(T_{10}) \subseteq \KK^{10}$;
this embedding is explicitly given by
$$
(z_1,\dots,z_6)
\ \mapsto \ 
(
 z_2z_3-z_1z_4,\,
 z_1z_6 - z_2z_5,\,
 z_1,\,
 z_2,\,
 z_3z_6 - z_4z_5,\,
 z_3,\,
 z_4,\,
 z_5,\,
 z_6, \,
1).
$$
Observe that $\overline X_2 \setminus V(T_{10})$
is the subset of the affine cone over $G(2,5)$
corresponding to a Schubert cell consisting 
of all lines of $\PP_4$ not meeting a 
certain plane in $\PP_4$.
\end{example}

As an immediate consequence of 
Proposition~\ref{prop:coximage}, we obtain that 
$X_1$ is a Mori dream space provided $X_2$ is one; 
recall that in~\cite{Ok} it is more generally 
proven that for any dominant morphism $X_1 \to X_2$ 
of $\QQ$-factorial projective varieties, $X_2$ 
is a Mori dream space if $X_1$ is.
The converse question is in general delicate.
For a classical counterexample, consider 
points $x_1, \ldots, x_9 \in \PP_2$ that lie 
on precisely one smooth cubic $\Gamma \subseteq \PP_2$ 
and admit a line $\ell \subseteq \PP_2$ such that 
$3 \ell \vert_{\Gamma} -p_1 -\ldots -p_9$ is not a torsion element
of $\Pic^0(\Gamma)$. Then the blow up $X_1$ of 
$\PP_2$ at $x_1, \ldots, x_8$ is a Mori 
dream surface and the blow up $X_2$ of $X_{1}$ at $x_9$ 
is not, see for example~\cite[Prop.~4.3.4.5]{coxrings}.

We now upgrade the technique of toric ambient 
modifications developed in~\cite{Ha2,BaHaKe}
according to our computational purposes. 
In the following setting, $\rq{X}_i \to X_i$ 
is not necessarily a characteristic space and 
$\b{X}_i$ not necessarily a total coordinate space.

\begin{setting}
\label{set:ambmod}
Let $\pi \colon Z_2 \to Z_1$ be a toric modification,
i.e.~$Z_1$, $Z_2$ are complete toric varieties and 
$\pi$ is a proper birational toric morphism.
Moreover, let $X_i \subseteq Z_i$ be closed 
subvarieties, both intersecting the big $n$-torus 
$\TT^n \subseteq Z_i$, such that $\pi(X_2) = X_1$ 
holds. Then we have a commutative diagram
$$
\xymatrix{
{\KK^{r_2}}
\ar@{}[r]|=
&
{\b{Z}_2}
\ar@{}[r]|\supseteq
&
{\b{X}_2}
&
{\b{X}_1}
\ar@{}[r]|\subseteq
&
{\b{Z}_1}
\ar@{}[r]|=
&
{\KK^{r_1}}
\\
&
{\rq{Z}_2}
\ar@{}[r]|\supseteq
\ar@{}[u]|{\rotatebox[origin=c]{90}{$\scriptstyle \subseteq$}}
\ar[d]_{p_2}
&
{\rq{X}_2}
\ar@{}[u]|{\rotatebox[origin=c]{90}{$\scriptstyle \subseteq$}}
\ar[d]_{p_2}
&
{\rq{X}_1}
\ar@{}[u]|{\rotatebox[origin=c]{90}{$\scriptstyle \subseteq$}}
\ar[d]^{p_1}
\ar@{}[r]|\subseteq
&
{\rq{Z}_1}
\ar@{}[u]|{\rotatebox[origin=c]{90}{$\scriptstyle \subseteq$}}
\ar[d]^{p_1}
&
\\
&
Z_2
\ar@{}[r]|\supseteq
\ar@/_2pc/[rrr]_{\pi}
&
X_2
\ar[r]
&
X_1
\ar@{}[r]|\subseteq
&
Z_1
&
\\
&&&&&
}
$$
where the downwards maps $p_i \colon \rq{Z}_i \to Z_i$ 
are toric characteristic spaces and
$\rq{X}_i \subseteq \rq{Z}_i$ 
are the closures of the inverse image 
$p_i^{-1}(X_i \cap \TT^n)$. 
Let $I_i \subseteq \KK[T_1,\ldots,T_{r_i}]$
be the vanishing ideal of the closure 
$\b{X}_i \subseteq \KK^{r_i}$
of $\rq{X}_i \subseteq \rq{Z}_i$
and set $R_i := \KK[T_1,\ldots,T_{r_i}] / I_i$.
Note that $R_i$ is graded by 
$K_i := \Cl(Z_i)$.
\end{setting}

\begin{theorem}
\label{thm:ambientblow}
Consider the Setting~\ref{set:ambmod}.
\begin{enumerate}
\item
If $X_1 \subseteq Z_1$ is a CEMDS, the ring $R_2$ 
is normal and 
$T_{1}, \ldots, T_{r_2}$ define pairwise 
non-associated $K_2$-primes in~$R_2$,
then $X_2 \subseteq Z_2$ is a CEMDS.
In particular, $K_2$ is the divisor class group 
of $X_2$ and $R_2$ is the Cox ring of~$X_2$. 
\item
If $X_2 \subseteq Z_2$ is a CEMDS,
then $X_1 \subseteq Z_1$ is a CEMDS.
In particular, $K_1$ is the divisor class group 
of $X_1$ and $R_1$ is the Cox ring of~$X_1$. 
\end{enumerate}
\end{theorem}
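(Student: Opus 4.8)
The plan is to reduce both parts to the characterisation of CEMDS in terms of the toric data recalled in Remark~\ref{rem:semistab}, and then to transport the relevant algebraic properties along the modification $\pi$. The key point is that being a CEMDS is a condition on the pair $(X_i \subseteq Z_i)$: one needs $\rq X_i \to X_i$ to be a characteristic space (equivalently $R_i$ to be the Cox ring, with $K_i = \Cl(X_i)$), which by the factoriality criterion amounts to $R_i$ being normal, integral, and $K_i$-factorially graded with the $T_j$ pairwise non-associated $K_i$-primes. So in both directions the task is to verify these ring-theoretic hypotheses on one side given that they hold on the other, using that $Z_2 \to Z_1$ is a toric modification and $X_2 = \pi^{-1}(X_1)$ in the sense fixed by Setting~\ref{set:ambmod} (closures of preimages of torus parts).

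\textbf{Part (ii).} Here I would argue that a toric modification $\pi\colon Z_2\to Z_1$ restricts, over the common big torus, to an isomorphism, and more precisely that the exceptional locus is a union of toric prime divisors. Accordingly there is a subset $S$ of the variables $T_1,\dots,T_{r_2}$ (those not corresponding to exceptional divisors, i.e.\ those whose rays survive in $\Sigma_1$) such that $R_1$ is obtained from $R_2$ by setting the exceptional coordinates to $1$ and relabelling the grading group via the pushforward $\t\pi_*\colon K_2\to K_1$. This is exactly the content of Proposition~\ref{prop:coximage} in the abstract (non-embedded) setting, and the embedded version should match it on the nose because $\rq X_i \to X_i$ are constructed compatibly with the toric characteristic spaces. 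Granting this, Proposition~\ref{prop:coximage} gives that $R_1$ is the Cox ring of $X_1$ and $K_1 = \Cl(X_1)$; what remains is that the surviving $T_j$ are pairwise non-associated $K_1$-primes in $R_1$, which follows because $\Cl$-primality of a canonical section is equivalent to the corresponding divisor being prime on $X_1$, and the divisors in question are the proper transforms of the prime divisors cut out by the $T_j$ on $X_2$ (a modification maps prime divisors not contained in the exceptional locus to prime divisors, and distinct ones to distinct ones since $\pi$ is birational). Hence $X_1 \subseteq Z_1$ is a CEMDS.

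\textbf{Part (i).} This is the substantive direction. The hypotheses hand us that $R_2$ is normal and that the $T_j$ are pairwise non-associated $K_2$-primes. What we must still produce is that $R_2$ is \emph{factorially} $K_2$-graded, i.e.\ the Cox ring of $X_2$, and that $K_2 = \Cl(X_2)$. I would first use normality together with the toric embedding $\rq X_2 \subseteq \rq Z_2$ to identify $\rq X_2 \to X_2$ as a good quotient for the action of $H_2 = \Spec\KK[K_2]$ and to see that the complement of $\rq X_2$ in $\b X_2 = \Spec R_2$ has codimension at least two (this uses that $\rq\Sigma_2$ is obtained from the ambient orthant by removing faces of codimension $\ge 2$, as in Remark~\ref{rem:semistab}, intersected with $\b X_2$, together with the fact that no $T_j$ vanishes identically on $\b X_2$, which is where the non-association / primality of the $T_j$ is used). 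Then I would invoke the characterisation of characteristic spaces (for instance via~\cite{coxrings}, Constr.~I.6.1.3 and the surrounding results, or the criterion quoted via~\cite[Prop.~3.2]{Ha2}): a good quotient $q\colon \t W \to W$ of a normal affine variety with $W$ projective, $\Cl(W)$ generated by the classes of the divisors $\div(T_j)$, and these divisors giving pairwise non-associated $\Cl$-primes, is a characteristic space, so $R_2 = \KK[T_j]/I_2$ is the Cox ring and $K_2 = \Cl(X_2)$. Finally, given the CEMDS property of $X_1$ and the resulting identification $R_1 = R_2/\langle T_j - 1 : j \text{ exceptional}\rangle$, one concludes that the embedding $X_2\subseteq Z_2$ is compatible in the required sense (i.e.\ $I_2$ is $K_2$-homogeneous and $\rq X_2 = \b X_2 \cap \rq Z_2$), which is essentially bookkeeping once the grading group has been correctly identified.

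\textbf{Main obstacle.} I expect the crux to be the codimension-two claim in part (i): showing that passing from $\b X_2$ to $\rq X_2$ removes only a set of codimension $\ge 2$, so that sections over $\rq X_2$ extend over $\b X_2$ and $R_2$ genuinely is the Cox ring rather than a proper subring or overring. This is precisely where the hypotheses ``$R_2$ normal'' and ``$T_j$ pairwise non-associated $K_2$-primes'' must be combined — normality supplies Hartogs-type extension, and the primality/non-association of the $T_j$ guarantees that the toric boundary strata do not collapse or become thin when intersected with $\b X_2$, so that the fan-theoretic codimension count of Remark~\ref{rem:semistab} survives the restriction to $X_2$. Everything else is either the abstract input Proposition~\ref{prop:coximage} or standard CEMDS/Cox-ring formalism from~\cite{coxrings,Ha2}.
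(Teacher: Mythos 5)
There is a genuine gap in your part~(i). The hypotheses hand you normality of $R_2$ and pairwise non-associated $K_2$-prime generators, but neither of these, nor the two combined, yields that $R_2$ is \emph{factorially $K_2$-graded} — and that is the property one must establish before any characterisation of characteristic spaces applies. A normal integral ring generated by pairwise non-associated $K$-primes need not be $K$-factorial (primality of a generating set says nothing about the height-one primes not meeting it), so the criterion you invoke (``good quotient, $\Cl$ generated by the classes of the $\div(T_j)$, these pairwise non-associated $\Cl$-primes $\Rightarrow$ characteristic space'') is not a valid theorem as stated; moreover it presupposes $K_2\cong\Cl(X_2)$ and that the classes of the $T_j$ generate it, which is part of the conclusion. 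The paper's actual mechanism is to \emph{transfer} the factorial grading from $R_1$ to $R_2$: first one shows $R_2$ is integral — this is itself a nontrivial step you skip, since $p_2^{-1}(X_2\cap\TT^n)$ could a priori be reducible ($H_2$ is only a quasitorus); the paper gets irreducibility of $\b{X}_2\cap\TT^{r_2}$ by writing it as $\mu(\alpha^{-1}(\b{X}_1\cap\TT^{r_1}))$ with $\alpha$ having connected kernel. Then completeness of $X_2$ and pointedness of the weight cone give that $R_2$ has only constant units, and \cite[Thm.~3.2]{BaHaKe} delivers the factorial $K_2$-grading from the CEMDS hypothesis on $X_1$. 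Your proposed crux, the codimension-two claim for $\b{X}_2\setminus\rq{X}_2$, is comparatively routine (it is inherited from the ambient toric fan) and does not substitute for this transfer argument.

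Your part~(ii) is essentially the paper's route: reduce to Proposition~\ref{prop:coximage} by setting the exceptional coordinates to $1$, then check that the surviving variables cut out pairwise distinct prime divisors on $X_1$. The one point you wave away — that ``the embedded version should match it on the nose'' — is exactly what the paper verifies by observing that for $f\in I_2$ the Laurent polynomials $\mu^*(f)$ and $\alpha^*(f(t_1,\ldots,t_{r_1},1,\ldots,1))$ agree up to a monomial factor, so that the ideal $I_1$ of Setting~\ref{set:ambmod} really is the one produced by Proposition~\ref{prop:coximage}; this check should not be omitted, but it is a minor repair compared with the missing factoriality argument in part~(i).
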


\begin{proof}
First consider the lattice homomorphisms $P_{i} \colon \ZZ^{r_{i}} \to \ZZ^{n}$
associated to the toric morphisms $p_{i} \colon \rq{Z}_{i} \to Z_{i}$.
Viewing the $P_{i}$ as matrices, we may assume that $P_{2} = [P_{1},B]$
with a matrix $B$ of size $n \times (r_{2}-r_{1})$. 
We have a commutative diagram of lattice homomorphisms and 
the corresponding diagram of homomorphisms of tori:
\begin{align}
\label{eq:ambientblowup}
 \xymatrix{
&
{\ZZ^{r_2}}
\ar[dl]_{\genfrac{}{}{0pt}{}{e_i \mapsto e_i}{e_j \mapsto m_je_j}}
\ar[dr]^{[E_{r_1},A]}
&
\\
{\ZZ^{r_2}}
\ar[d]_{P_2 = [P_1,B]}
&&
{\ZZ^{r_1}}
\ar[d]^{P_1}
\\
{\ZZ^{n}}
\ar[rr]_{E_n}
&&
{\ZZ^{n}}
}
\qquad\qquad\qquad
\xymatrix{
&
{\TT^{r_2}}
\ar[dl]_{\mu}
\ar[dr]^{\alpha}
&
\\
{\TT^{r_2}}
\ar[d]_{p_2}
&&
{\TT^{r_1}}
\ar[d]^{p_1}
\\
{\TT^{n}}
\ar[rr]_{\id}
&&
{\TT^{n}}
}
\end{align}
where in the left diagram, the $e_i$ are the first $r_1$, 
the $e_j$ the last $r_2-r_1$ canonical basis vectors 
of $\ZZ^{r_2}$, the $m_{j}$ are positive 
integers and $E_n, E_{r_1}$ denote the unit matrices
of size $n,r_1$ respectively and $A$ is an integral
$r_1 \times (r_2-r_1)$ matrix.

We prove~(i). 
We first show that $R_{2}$ is integral.
By construction, it suffices to show that 
$p_{2}^{-1}(X_{1} \cap \TT^{n})$ is irreducible.
By assumption, $\b{X}_{1} \cap \TT^{r_{1}}$ is 
irreducible.
Since $\alpha$ has connected kernel, also 
$\alpha^{-1}(\b{X}_{1} \cap \TT^{r_{1}})$
is irreducible. 
We conclude that 
$\b{X}_{2} \cap \TT^{r_{2}} = 
\mu(\alpha^{-1}(\b{X}_{1} \cap \TT^{r_{1}}))$
is irreducible.
Moreover, since $X_{2}$ is complete and the $K_{2}$-grading
of $R_{2}$ has a pointed weight cone, we obtain that 
$R_{2}$ has only constant units.
Thus, \cite[Thm.~3.2]{BaHaKe} yields that 
$R_{2}$ is factorially $K_{2}$-graded.
Since the $T_{i}$ are pairwise non-associated
$K_{2}$-primes and $R_2$ is normal, we conclude
that $R_2$ is the Cox ring of $X_2$ and 
$X_{2} \subseteq Z_{2}$ is a CEMDS.

We turn to~(ii). Observe that for every
$f \in I_{2}$, the Laurent polynomials 
$\mu^{*}(f)$ and $\alpha^{*}(f(t_{1},\ldots,t_{r_{1}},1,\ldots,1))$
differ by a monomial factor. We conclude
$$ 
\KK[T_{1}^{\pm 1}, \ldots, T_{r_{2}}^{\pm 1}] \cdot I_{2}
\ = \ 
\bangle{\alpha^*(f(t_{1},\ldots,t_{r_{1}},1,\ldots,1)); f \in I_{2}}
\ \subseteq \
\KK[T_{1}^{\pm 1}, \ldots, T_{r_{2}}^{\pm 1}].
$$
Now Proposition~\ref{prop:coximage} tells us that 
$R_{1}$ is the Cox ring of $X_{1}$.
Since $T_{1}, \ldots, T_{r_{1}}$ define pairwise 
different prime divisors in $X_{1}$, we conclude that 
$X_{1} \subseteq Z_{1}$ is a CEMDS.
\end{proof}

The verification of normality as well as the primality 
tests needed for Theorem~\ref{thm:ambientblow}~(ii) 
are computationally involved. The following observation 
considerably reduces the effort in many cases.

\begin{remark}
\label{rem:K1primcrit}
See~\cite[Prop.~3.3]{BaHaKe}.
Consider the Setting~\ref{set:ambmod}
and assume that the canonical map 
$K_2 \to K_1$ admits a section (e.g. $K_1$ is free).
\begin{enumerate}
\item 
If $R_1$ is normal and $T_{r_1+1},\ldots, T_{r_2}$ 
define primes in $R_2$ (e.g. they are $K_2$-prime 
and $K_2$ is free), then $R_2$ is normal.
\item
Let $T_1,\ldots, T_{r_1}$ define $K_1$-primes in $R_1$
and $T_{r_1+1},\ldots, T_{r_2}$ define $K_2$-primes in $R_2$.
If no $T_j$ with $j \ge r_1+1$ divides a $T_i$ with 
$i \le r_1$ in $R_2$, then also $T_1,\ldots, T_{r_1}$ define 
$K_2$-primes in $R_2$.
\end{enumerate}
\end{remark}

As a consequence of Theorem~\ref{thm:ambientblow}, 
we obtain that the modifications preserving finite 
generation are exactly those arising from toric 
modifications as discussed.
More precisely, let $Z_{2} \to Z_{1}$ be a toric 
modification mapping $X_{2} \subseteq Z_{2}$ onto 
$X_{1} \subseteq Z_{1}$. We call $Z_{2} \to Z_{1}$ 
a {\em good\/} toric ambient modification if it 
is as in Theorem~\ref{thm:ambientblow}~(i).

\begin{corollary}
Let $X_{2} \to X_{1}$ be a birational morphism of 
normal projective varieties such that 
the Cox ring $\mathcal{R}(X_{1})$ is finitely 
generated.
Then the following statements are equivalent.
\begin{enumerate}
\item
The Cox ring $\mathcal{R}(X_{2})$ is finitely generated. 
\item
The morphism $X_{2} \to X_{1}$ arises from 
a good toric ambient modification.
\end{enumerate}
\end{corollary}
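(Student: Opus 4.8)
The plan is to establish the two implications separately. The implication (ii)$\Rightarrow$(i) is immediate: if $X_2 \to X_1$ arises from a good toric ambient modification $Z_2 \to Z_1$, then by definition we are in the situation of Theorem~\ref{thm:ambientblow}~(i), so $X_2 \subseteq Z_2$ is a CEMDS and hence $\mathcal{R}(X_2) = R_2 = \KK[T_1,\dots,T_{r_2}]/I_2$ is finitely generated. The content is the converse.

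So assume $\mathcal{R}(X_2)$ is finitely generated, i.e.\ $X_2$ is a Mori dream space; the goal is to realize $X_2 \to X_1$ by the data of Setting~\ref{set:ambmod} with the properties of Theorem~\ref{thm:ambientblow}~(i). First I would record that the Cox ring of a Mori dream space admits a finite system of generators consisting of canonical sections of prime divisors: by factoriality of the $\Cl$-grading, every homogeneous element equals a nonzero constant times the product of the canonical sections of the components of its divisor of zeros, so one may replace any finite generating system by the finitely many canonical sections occurring in it. Using this, I would pick prime divisors on $X_2$ whose canonical sections generate $\mathcal{R}(X_2)$ and which include the exceptional prime divisors $E_1,\dots,E_l$ of $\pi \colon X_2 \to X_1$; discarding repetitions produces a system $\mathfrak{F} = (f_1,\dots,f_{r_1},f_{r_1+1},\dots,f_{r_2})$ with $r_2 = r_1+l$, where $f_{r_1+k}$ is the canonical section of $E_k$ and $f_1,\dots,f_{r_1}$ are canonical sections of pairwise distinct non-exceptional prime divisors $D_1,\dots,D_{r_1}$. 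As canonical sections of prime divisors are $\Cl$-prime and the members of $\mathfrak{F}$ are pairwise non-associated, $\mathfrak{F}$ gives a CEMDS $X_2 \subseteq Z_2$ for a suitable ample class, with degree map $Q_2 \colon \ZZ^{r_2} \to K_2 = \Cl(X_2)$ and Gale dual $P_2 \colon \ZZ^{r_2} \to \ZZ^n$.

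Next I would push the picture down to $X_1$. By Proposition~\ref{prop:coximage} the push forward $\pi_*$ is surjective, it sends $f_{r_1+k}$ to $1$, and it sends $f_i$ to the canonical section $s_{\pi(D_i)}$ for $i \le r_1$; since $\pi$ is an isomorphism over $X_1 \setminus C$, the $\pi(D_i)$ are pairwise distinct prime divisors of $X_1$, so the $s_{\pi(D_i)}$ form a system of pairwise non-associated $\Cl(X_1)$-prime generators of $\mathcal{R}(X_1)$ and yield a CEMDS $X_1 \subseteq Z_1$ with degree map $Q_1 \colon \ZZ^{r_1} \to K_1 = \Cl(X_1)$ and Gale dual $P_1$. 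Choosing the ample classes compatibly, so that the relevant semistable set for $Z_2$ lies over the one for $Z_1$ (cf.\ Remark~\ref{rem:semistab}), the linear data satisfy, after the standard coordinate change, the relation $P_2 = [P_1,B]$ of diagram~\eqref{eq:ambientblowup} and the fan of $Z_2$ refines that of $Z_1$; thus $Z_2 \to Z_1$ is a toric modification. The toric divisor $V(T_{r_1+k})$ of $Z_2$ restricts on $X_2$ to $E_k$, and by the construction of toric ambient modifications in~\cite{Ha2,BaHaKe} the induced birational morphism $X_2 \to X_1$ is the given $\pi$; hence all data fit into Setting~\ref{set:ambmod}. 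Now $X_1 \subseteq Z_1$ is a CEMDS by construction, $R_2 = \mathcal{R}(X_2)$ is normal, and $T_1,\dots,T_{r_2}$ define pairwise non-associated $K_2$-primes in $R_2$ by the choice of $\mathfrak{F}$, so the hypotheses of Theorem~\ref{thm:ambientblow}~(i) are met: $Z_2 \to Z_1$ is a good toric ambient modification realizing $\pi$.

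The verifications at the end are routine; the hard part will be the middle step, namely passing from the two CEMDS $X_i \subseteq Z_i$ to an honest toric modification $Z_2 \to Z_1$ that fits Setting~\ref{set:ambmod} and induces the prescribed $\pi$. Concretely, one must pick the ample classes so that the fan $\Sigma_2$ genuinely refines $\Sigma_1$, equivalently so that the maximal cones of $\Sigma_1$ are recovered by merging those of $\Sigma_2$ after deleting the last $l$ rays, and one must check that the resulting contraction restricts on $X_2$ to $\pi$ and not to some other birational contraction. This is exactly what the toric ambient modification technique of~\cite{Ha2,BaHaKe} provides; granting it, everything else reduces to Proposition~\ref{prop:coximage}, Theorem~\ref{thm:ambientblow}, and the observation that Cox rings are generated by canonical sections of prime divisors.
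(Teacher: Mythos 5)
Your proposal is correct and follows essentially the same route as the paper: (ii)$\Rightarrow$(i) is Theorem~\ref{thm:ambientblow}, and for the converse one chooses pairwise non-associated $K_2$-prime generators of $R_2$ whose push-forwards under Proposition~\ref{prop:coximage} generate $R_1$, builds the two compatible embeddings, and invokes Theorem~\ref{thm:ambientblow}~(i). The only difference is that the step you defer to the literature --- producing an honest toric modification $Z_2 \to Z_1$ inducing $\pi$ --- is carried out explicitly in the paper by a short variation-of-GIT argument: pull back an ample class $w_1 \in K_1$ to a semiample class $w_2' \in K_2$, choose an ample $w_2$ whose chamber has $w_2'$ in its closure, and compare the resulting semistable loci $\rq{Z}_2 \subseteq \rq{Z}_2'$ to get $Z_2 \to \rq{Z}_2' \quot H_2 \cong Z_1$.
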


\begin{proof}
The implication ``(ii)$\Rightarrow$(i)'' is 
Theorem~\ref{thm:ambientblow}.
For the reverse direction, set $K_i := \Cl(X_i)$
and $R_i := \mathcal{R}(X_i)$.
Let $f_1,\ldots,f_{r_2}$ be pairwise nonassociated 
$K_2$-prime generators for $R_2$.
According to Proposition~\ref{prop:coximage},
we may assume, after suitably numbering,  that 
$f_1,\ldots,f_{r_1}$ define generators of $R_1$,
where $r_1 \le r_2$. 
Now take an ample class $w_1 \in K_1$.
Then the pullback $w_2' \in K_2$ of $w_1$
under $X_2 \to X_1$ is semiample on $X_2$.
Choose $w_2 \in K_2$ such that $w_2$ is ample 
on $X_2$ and the toric ambient variety $Z_2$ 
of $X_2$ defined by $w_2$ has an ample cone 
containing $w_2'$ in its closure.
Then, with the sets of semistable points 
$\rq{Z}_2,  \rq{Z}_2' \subseteq \KK^{r_2}$ 
defined by $w_2$, $w_2'$ respectively
and $\rq{Z}_1 \subseteq \KK^{r_1}$ the one
defined by $w_1$. Then we obtain morphisms
$$ 
Z_2 
\ = \ 
{\rq{Z}_2 \quot H_2}
\ \to \ 
{\rq{Z}_2' \quot H_2}
\ \cong \ 
{\rq{Z}_1 \quot H_2}
\ = \ 
Z_1,
$$
where $H_i := \Spec \, \KK[K_i]$ denotes the 
characteristic quasitorus of $Z_i$; observe 
that $\rq{Z}_2' \to \rq{Z}_2' \quot H_2$ is 
in general not a toric characteristic space.
Thus, we arrive at Setting~\ref{set:ambmod} 
and $Z_2 \to Z_1$ is the desired good toric 
ambient modification inducing the morphism
$X_2 \to X_1$.
\end{proof}

For a flexible use of Theorem~\ref{thm:ambientblow} we will 
have to adjust given embeddings of a Mori dream 
space, e.g.~bring general 
points of a CEMDS into a more special position,
or remove linear relations from a redundant 
presentation of the Cox ring.
The formal framework is the following.

\begin{setting}
\label{set:stretchcompress}
Let $Z_1$ be a projective toric variety with 
toric total coordinate space 
$\b{Z}_1 = \KK^{r_1}$,
toric characteristic space 
$p_1 \colon \rq{Z}_1 \to Z_1$
and ample class $w \in K_1 := \Cl(Z_1)$. 
Consider $K_1$-homogeneous polynomials 
$h_1, \ldots, h_l \in \KK[T_1,\ldots,T_{r_1}]$ 
and, with $r_1' := r_1+l$, the 
(in general non-toric) embedding 
$$ 
\bar{\imath} \colon \KK^{r_1} \ \to \ \KK^{r_1'},
\qquad\qquad
(z_1, \ldots, z_{r_1}) 
\ \mapsto \ 
(z_1, \ldots, z_{r_1}, h_1(z), \ldots, h_l(z)).
$$
Note that $\KK[T_1,\ldots,T_{r_1'}]$ is graded by 
$K_1' := K_1$ via attaching to $T_1,\ldots, T_{r_1}$ 
their former $K_1$-degrees and to $T_{r_1+i}$ 
the degree of $h_i$.
The class $w \in K_1'$ defines a toric variety
$Z_1'$ with toric total coordinate space 
$\b{Z}_1' = \KK^{r_1'}$ toric characteristic space 
$p_1 \colon \rq{Z}_1' \to Z_1'$.
Any closed subvariety $X_1 \subseteq Z_1$ 
and its image $X_1' := \imath(X_1)$ lead to 
a commutative diagram
$$
\xymatrix{
{\KK^{r_1}}
\ar@{}[r]|=
&
{\b{Z}_1}
\ar@/^2pc/[rrrrr]^{\b{\imath}}
\ar@{}[r]|\supseteq
&
{\rq{Z}_1}
\ar@{}[r]|\supseteq
\ar[d]_{p_1}
&
{\rq{X}_1}
\ar[d]_{p_1}
&
{\rq{X}_1'}
\ar[d]^{p_1'}
\ar@{}[r]|\subseteq
&
{\rq{Z}_1'}
\ar@{}[r]|\subseteq
\ar[d]^{p_1'}
&
{\b{Z}'_1}
\ar@{}[r]|=
&
{\KK^{r_1'}}
\\
&
&
Z_1
\ar@{}[r]|\supseteq
\ar@/_2pc/[rrr]_{\imath}
&
X_1
\ar[r]
&
X_1'
\ar@{}[r]|\subseteq
&
Z_1'
&
&
\\
&&&&&&&
}
$$
where 
$\rq{X}_1 \subseteq \rq{Z}_1$ 
and 
$\rq{X}_1' \subseteq \rq{Z}_1'$ 
are the closures of the inverse image 
$p_1^{-1}(X_1 \cap \TT^n)$ 
and $(p_1')^{-1}(X_1' \cap \TT^{n'})$ . 
Denote by $I_1$ and $I_1'$ 
the respective vanishing ideals
of the closures
$\b{X}_1 \subseteq \KK^{r_1}$
of $\rq{X}_1 \subseteq \rq{Z}_1$
and
$\b{X}_1' \subseteq \KK^{r_1'}$
of $\rq{X}_1' \subseteq \rq{Z}_1'$.
Set $R_1 := \KK[T_1,\ldots,T_{r_1}] / I_1$
and  $R_1' := \KK[T_1,\ldots,T_{r_1'}] / I_1'$.
\end{setting}

\begin{proposition}
\label{prop:stretchcompress}
Consider the Setting~\ref{set:stretchcompress}.
\begin{enumerate}
\item
If $X_1 \subseteq Z_1$ is a CEMDS and 
$T_1,\ldots, T_{r_1}, h_{1}, \ldots, h_{l}$ 
define pairwise non-associated 
$K_1$-primes in~$R_1$ then $X_1' \subseteq Z_1'$ 
is a CEMDS.
\item
If $R_1'$ is normal
the localization $(R_1')_{T_{1} \cdots T_{r_1}}$ 
is factorially $K_1'$-graded and $T_1,\ldots,T_{r_1}$ 
define pairwise non-associated $K_1$-primes in $R_1$
such that $K_1$ is generated by any $r_1-1$ of their 
degrees, then $X_1 \subseteq Z_1$ is a CEMDS.
\item
If $X_1' \subseteq Z_1'$ is a CEMDS, 
then $X_1 \subseteq Z_1$ is a CEMDS.
\end{enumerate}
\end{proposition}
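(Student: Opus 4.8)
The plan is to treat the three statements by relating the "stretched" embedding $X_1' \subseteq Z_1'$ to the original $X_1 \subseteq Z_1$ through the toric ambient modification machinery already established in Theorem~\ref{thm:ambientblow}, supplemented by Proposition~\ref{prop:coximage}. The key geometric observation is that the graph-type embedding $\b\imath$ realizes $\b X_1'$ inside $\KK^{r_1'} = \KK^{r_1} \times \KK^l$ as the vanishing locus of $I_1$ together with the $l$ equations $T_{r_1+i} - h_i(T_1,\ldots,T_{r_1})$; hence the projection $\KK^{r_1'} \to \KK^{r_1}$ onto the first $r_1$ coordinates restricts to an isomorphism $\b X_1' \to \b X_1$, and this isomorphism is equivariant for the quasitorus $H_1 = \Spec\,\KK[K_1] = \Spec\,\KK[K_1']$ because the $K_1'$-degree of $T_{r_1+i}$ is by construction $\deg(h_i)$. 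One checks that this projection descends to the morphism $X_1' \to X_1$ of the diagram, that it is the modification contracting the divisors $V(T_{r_1+i})$ one at a time, and in fact that $I_1'$ is obtained from the ideal $I_1 + \bangle{T_{r_1+i}-h_i}$ after passing to the closure of the preimage of the torus; so we are precisely in the shape of Setting~\ref{set:ambmod} with the matrix $B$ recording the monomials of the $h_i$.

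For (iii): assuming $X_1' \subseteq Z_1'$ is a CEMDS, I would first argue that the variables $T_{r_1+1},\ldots,T_{r_1'}$ together with $T_1,\ldots,T_{r_1}$ are pairwise non-associated $K_1'$-primes in $R_1'$ (this is part of being a CEMDS), then invoke Theorem~\ref{thm:ambientblow}~(ii) — or, what is cleaner here, Proposition~\ref{prop:coximage} applied to the modification $X_1' \to X_1$ with exceptional divisors $V(T_{r_1+i})$ — to conclude that $R_1 \cong R_1'/\bangle{T_{r_1+i}-h_i}$ is the Cox ring of $X_1$ and that $T_1,\ldots,T_{r_1}$ define pairwise distinct prime divisors on $X_1$; since $X_1 \subseteq Z_1$ sits inside the toric $Z_1$ cut out by the image ideal, it is then a CEMDS. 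For (i): here one runs Theorem~\ref{thm:ambientblow}~(i) in the other direction. Given that $X_1 \subseteq Z_1$ is a CEMDS and that $T_1,\ldots,T_{r_1},h_1,\ldots,h_l$ are pairwise non-associated $K_1$-primes in $R_1$, I want to conclude that $T_1,\ldots,T_{r_1'}$ are pairwise non-associated $K_1'$-primes in $R_1'$ and that $R_1'$ is normal. Normality of $R_1'$ is immediate because $R_1' \cong R_1$ as rings (the extra variables are eliminated by the relations $T_{r_1+i}=h_i$), and $R_1$ is a Cox ring, hence normal. For the primality of the $T_{r_1+i}$ in $R_1'$: under the isomorphism $R_1' \cong R_1$ the class of $T_{r_1+i}$ goes to $h_i$, which is $K_1$-prime by hypothesis, and the grading groups match; the hypothesis that the $T_1,\ldots,T_{r_1},h_1,\ldots,h_l$ are pairwise non-associated transfers verbatim. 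Then Theorem~\ref{thm:ambientblow}~(i) gives that $X_1' \subseteq Z_1'$ is a CEMDS.

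For (ii), which I expect to be the main obstacle: the hypotheses are weaker — $R_1'$ normal, $(R_1')_{T_1\cdots T_{r_1}}$ factorially $K_1'$-graded, and $T_1,\ldots,T_{r_1}$ pairwise non-associated $K_1$-primes in $R_1$ with $K_1$ generated by any $r_1-1$ of their degrees — and one is \emph{not} assuming a priori that $X_1' \subseteq Z_1'$ is a CEMDS, so Theorem~\ref{thm:ambientblow}~(ii) does not apply directly. The strategy here is to show that $R_1$ is factorially $K_1$-graded and hence is the Cox ring of $X_1$. One passes to the localization at $T_1\cdots T_{r_1}$: since $\b\imath$ induces an isomorphism $R_1' \cong R_1$ compatible with the gradings, $(R_1)_{T_1\cdots T_{r_1}}$ inherits the factorial $K_1'=K_1$-grading from $(R_1')_{T_1\cdots T_{r_1}}$. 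To promote factoriality of the localization to factoriality of $R_1$ itself one uses the standard criterion (as in~\cite{BaHaKe,coxrings}): a $K$-graded normal domain with only constant graded units whose localization at $T_1,\ldots,T_{r_1}$ is factorially $K$-graded is already factorially $K$-graded provided each $T_i$ is $K$-prime and removing any of them still leaves $K$ generated — this is exactly where the hypothesis "$K_1$ generated by any $r_1-1$ of the $\deg T_i$" and the $K_1$-primality of the $T_i$ enter, guaranteeing the divisor class group is not enlarged and no spurious units appear. Pointedness of the weight cone (hence only constant graded units) comes from completeness of $X_1 \subseteq Z_1$. Once $R_1$ is known to be factorially $K_1$-graded with the $T_i$ pairwise non-associated $K_1$-primes, $R_1$ is the Cox ring of $X_1$, $K_1 = \Cl(X_1)$, and $X_1 \subseteq Z_1$ is a CEMDS. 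The delicate point to get right is the bookkeeping in the localization-to-global factoriality step and verifying the hypotheses of the cited criterion are met with exactly the assumptions listed; everything else is a transcription through the ring isomorphism $R_1' \cong R_1$.
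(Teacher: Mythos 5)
Your parts (i) and (ii) are substantively the paper's argument: everything rests on the observation that $I_1'=I_1+\bangle{T_{r_1+1}-h_1,\ldots,T_{r_1'}-h_l}$, so elimination gives a graded isomorphism $R_1'\to R_1$ sending $T_{r_1+i}$ to $h_i$; (i) follows by transferring normality, factoriality and the non-associated-primality of the generators through this isomorphism, and (ii) is exactly the paper's localization argument (the paper cites \cite[Thm.~1.2]{Be} for the passage from factoriality of $(R_1)_{T_1\cdots T_{r_1}}$ to factoriality of $R_1$). However, your framing of the stretch as an instance of Setting~\ref{set:ambmod} is incorrect. There $Z_2\to Z_1$ is a proper birational toric morphism between complete toric varieties of the \emph{same} dimension and $P_2=[P_1,B]$ has the same target lattice $\ZZ^n$; here $P_1'$ maps to $\ZZ^{n'}$ with $n'=n+l$, the ambient toric varieties have different dimensions, $X_1'\to X_1$ is an isomorphism rather than a modification, and $V(T_{r_1+i})\cap X_1'$ is the non-exceptional divisor $V(h_i)$ on $X_1$. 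Consequently neither Theorem~\ref{thm:ambientblow} nor Proposition~\ref{prop:coximage} applies (the latter's conclusion concerns $R_2/\bangle{f_i-1}$ for canonical sections of exceptional divisors; your $R_1'/\bangle{T_{r_1+i}-h_i}$ is just $R_1'$ again, since these relations already lie in $I_1'$). For (i) this is harmless, since the direct transfer suffices; for (iii) it hides a real gap.

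The gap in (iii) is this. From $X_1'\subseteq Z_1'$ being a CEMDS and the isomorphism $R_1'\cong R_1$ you do get that $R_1$ is normal, that $(R_1')_{T_1\cdots T_{r_1}}$ is factorially graded and that $T_1,\ldots,T_{r_1}$ are pairwise non-associated $K_1$-primes --- i.e.\ every hypothesis of (ii) except one: that any $r_1-1$ of the degrees $\deg(T_1),\ldots,\deg(T_{r_1})$ generate $K_1$. This is not automatic and is the entire content of the paper's proof of (iii); it is what guarantees that after dropping the coordinates $T_{r_1+1},\ldots,T_{r_1'}$ the $K_1$-grading of $\KK[T_1,\ldots,T_{r_1}]$ is still almost free, so that $Q_1$ is the degree map of a genuine toric characteristic space $\rq{Z}_1\to Z_1$ with $\Cl(Z_1)=K_1$. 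The paper's argument: since $T_1,\ldots,T_{r_1}$ generate $R_1$ and $T_{r_1+j}$ (i.e.\ $h_j$) is not a multiple of any single $T_i$, each $h_j$ contains, for every $i\le r_1$, a monomial not involving $T_i$; hence $\deg(T_{r_1+j})$ lies in the subgroup generated by any $r_1-1$ of the first $r_1$ degrees, and combined with the fact that any $r_1'-1$ of all the degrees generate $K_1'=K_1$ this yields the claim. Your closing sentence for (iii), that $X_1\subseteq Z_1$ "is then a CEMDS" because it sits inside $Z_1$, skips precisely this verification.
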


\begin{proof}
First, observe that the ideal $I_1'$ equals 
$I_1 + \bangle{T_{r_1+1}-h_1, \ldots, T_{r_1'}-h_l}$.
Consequently, we have a canonical graded isomorphism 
$R_1' \to R_1$ sending $T_{r_1+i}$ to $h_i$.
Assertion~(i) follows directly.

We prove~(ii).
Since $(R_1')_{T_{1} \cdots T_{r_1}}$ is factorially 
$K_1'$-graded,
we obtain that $(R_1)_{T_{1} \cdots T_{r_1}}$ is 
factorially $K_1$-graded.
Since $T_1,\ldots,T_{r_1}$ define $K_1$-primes
in $R_1$, we can apply~\cite[Thm.~1.2]{Be} to 
see that $R_1$ is factorially $K_1$-graded.
Since $T_1,\ldots,T_{r_1}$ are pairwise 
non-associated we conclude that 
$X_1 \subseteq Z_1$ is a CEMDS.

We turn to~(iii). According to~(ii), we only 
have to show that any $r_1-1$ of the degrees 
of $T_1,\ldots,T_{r_1}$ generate $K_1$.
For this, it suffices to show that each
$\deg(T_j)$ for $j = r_1+1, \ldots, r_1+l$ is 
a linear combination of any $r_1-1$ of the 
first $r_1$ degrees. 
Since $T_1,\ldots,T_{r_1}$ generate
$R_1$ and $T_j$ is not a mutiple of any 
$T_i$, we see that for any $i = 1, \ldots, r_1$,
there is a monomial in $h_j$ not depending on 
$T_i$. The assertion follows.
\end{proof}

\section{Basic algorithms}
\label{sec:algos}

Here we provide the general algorithmic framework.
In order to encode a compatibly embedded
Mori dream space $X_i \subseteq Z_i$ and 
its Cox ring $R_i$, we use the triple 
$(P_i,\Sigma_i,G_i)$, where $P_i$ and $\Sigma_i$ 
are as in Remark~\ref{rem:semistab} 
and $G_i = (g_1,\ldots,g_s)$ 
is a system of generators of the defining ideal 
$I_i$ of the Cox ring $R_i$.
We call such a triple $(P_i,\Sigma_i,G_i)$ as well a
CEMDS.

Given a CEMDS $(P_i,\Sigma_i,G_i)$, the total
coordinate space $\b{X}_i$ is the common zero 
set of the functions in $G_i$ and the degree 
map $Q_i \colon \ZZ^{r_i} \to K_i$  and $P_i$ 
are Gale dual to each other in the sense that 
$Q_i$ is surjective and $P_i$ is the dual of the 
inclusion $\ker(Q_i) \subseteq \ZZ^{r_i}$.
Moreover, $p_i \colon \rq{X}_i \to X_i$ is the 
restriction of the toric morphism defined by~$P_i$.
The following two algorithms implement
Proposition~\ref{prop:stretchcompress}.

\begin{algorithm}[StretchCEMDS]
\label{algo:stretchcemds}
{\em Input: } a CEMDS $(P_1,\Sigma_1,G_1)$, a list 
$(f_1,\ldots,f_l)$ of polynomials $f_i \in \KK[T_1,\ldots,T_{r_1}]$ 
defining pairwise non-associated $K_1$-primes in~$R_1$.
\begin{itemize}
\item
Compute the Gale dual $Q_1 \colon \ZZ^{r_1} \to K_1$ 
of $P_1$.
\item 
Let $Q_1' \colon \ZZ^{r_1+l} \to K_1$ be the 
extension of $Q_1$ by the degrees of $f_1,\ldots,f_l$.
\item
Compute the Gale dual $P_1' \colon \ZZ^{r_1+l} \to \ZZ^{n'}$ 
of $Q_1'$ and the fan $\Sigma'$ in $\ZZ^{n'}$ defined by $P_1'$
and the ample class $w \in K_1' = K_1$ of $Z_1$.
\item
Set $G'_1 := (g_1,\ldots,g_s,T_{r_1+1}-f_1,\ldots,T_{r_1+l}-f_l)$,
where $G_1 = (g_1,\ldots,g_s)$.
\end{itemize}
{\em Output: } the CEMDS $(P_1',\Sigma_1',G_1')$.
\end{algorithm}

The input of the second algorithm is more generally an 
{\em embedded space\/} $X_1 \subseteq Z_1$ that means 
just a closed normal subvariety intersecting the 
big torus.
In particular, we do not care for the moment if $R_1$ 
is the Cox ring of $X_1$. 
We encode $X_1 \subseteq Z_1$ as well by a triple 
$(P_1,\Sigma_1,G_1)$ and name it for short an ES. 
For notational reasons we write $(P_1',\Sigma_1',G_1')$ 
for the input.

\begin{algorithm}[CompressCEMDS]
\label{algo:compresscemds}
{\em Input: } an ES $(P_1',\Sigma_1',G_1')$
such that 
$R_1'$ is normal,
the localization 
$(R_1')_{T_{1} \cdots T_{r_1}}$ 
is factorially $K_1'$-graded and the last $l$ relations 
in $G_1'$ are fake, i.e.~of the form 
$f_i = T_i - h_i$ with $h_i$ 
not depending on $T_i$.
{\em Option: } \texttt{verify}.
\begin{itemize}
\item
Successively substitute $T_i = h_i$ in $G_1'$.
Set $G_1 := (f_1,\ldots, f_{r_1})$,
where $G_1' = (f_1,\ldots, f_{r_1'})$ and $r_1 := r_1' - l$.
\item
Set $K_1 := K_1'$ and let $Q_1 \colon \ZZ^{r_1} \to K_1$ 
be the map sending $e_i$ to $\deg(T_i)$ for 
$1 \le i \le r_1$.
\item
Compute a Gale dual $P_1 \colon \ZZ^{r_1} \to \ZZ^n$
of $Q_1$ and the fan $\Sigma_1$ in $\ZZ^n$ defined 
by $P_1$ and the ample class $w \in K_1 = K_1'$ of $Z_1'$.
\item 
If \texttt{verify} 
was asked then
\begin{itemize}
\item
check if any $r_1-1$ of the degrees of 
$T_1, \ldots, T_{r_1}$ generate $K_1$,
\item
check if $\dim(I_1) - \dim(I_1 + \bangle{T_i,T_j})\geq 2$ 
for all $i \ne j$,
\item
check if $T_1, \ldots, T_{r_1}$ 
define 
$K_1$-primes in $R_1$.
\end{itemize}
\end{itemize}
{\em Output: } the ES $(P_1,\Sigma_1,G_1)$. If 
$(P_1',\Sigma_1',G_1')$ is a CEMDS or all verifications
were positive, then $(P_1,\Sigma_1,G_1)$ is a CEMDS.
In particular, then $R_1$ is the Cox ring of 
the corresponding subvariety $X_1 \subseteq Z_1$.
\end{algorithm}

We turn to the algorithmic version of 
Theorem~\ref{thm:ambientblow}.
We will work with the {\em saturation\/} 
of an ideal $\aa \subseteq \KK[T_1,\ldots,T_r]$
with respect to an ideal $\bb \subseteq \KK[T_1,\ldots,T_r]$;
recall that this is the ideal 
$$
\aa : \bb^\infty 
\ := \ 
\{g \in \KK[T_1,\ldots,T_r]; \; 
g\,\bb^k \subseteq \aa\text{ for some } k \in \ZZ_{\ge 0}\}
\ \subseteq \
\KK[T_1,\ldots,T_r].
$$ 
In case of a principal ideal $\bb = \<f\>$,
we write $\aa:f^\infty$ instead of $\aa:\bb^\infty$.
We say that an ideal $\aa \subseteq \KK[T_1,\ldots,T_r]$ is
$f$-\textit{saturated} if $\aa =\aa : f^\infty$ holds.
We will only consider saturations with respect to 
$f = T_1 \cdots T_r \in \KK[T_1,\ldots,T_r]$;
we refer to~\cite[Chap.~12]{Stu} for the computational aspect.
Let us recall the basic properties, see also~\cite{Jen}.

\begin{lemma}
\label{lem:satprops}
Consider  
$\KK[T,U^{\pm 1}]$ with 
$T = (T_1,\ldots, T_{r_1})$
and
$U = (U_1,\ldots, U_{r_2-r_1})$.
For $f := U_1 \cdots U_{r_2-r_1} \in \KK[U]$,
one has mutually inverse bijections
 \begin{eqnarray*}
\left\{
\begin{array}{c}
\text{ideals in}\ 
\KK[T, U^{\pm 1}]
\end{array}
\right\}
&  \ \longleftrightarrow \ &
\left\{
\begin{array}{c}
\text{$f$-saturated ideals in}\ 
\KK[T, U]
\end{array}
\right\}
\\
\aa
& \mapsto &
\aa\, \cap\, \KK[T, U]
\\
\bangle{\bb}_{\KK[T, U^{\pm 1}]}
&  \mapsfrom &
\bb.
\end{eqnarray*}
Under these maps, the prime ideals of 
$\KK[T, U^{\pm 1}]$
correspond to the $f$-saturated prime 
ideals of $\KK[T, U]$.
\end{lemma}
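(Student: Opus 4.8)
The plan is to establish the bijection between ideals of the localized ring $\KK[T,U^{\pm 1}]$ and $f$-saturated ideals of $\KK[T,U]$ by exhibiting the two maps explicitly and checking they are mutually inverse. First I would recall that $\KK[T,U^{\pm 1}]$ is the localization $\KK[T,U]_f$ at the multiplicative set generated by $f = U_1 \cdots U_{r_2-r_1}$, and that localization at a multiplicative set $S$ gives an order-preserving bijection between ideals of $S^{-1}A$ and $S$-saturated ideals of $A$, where an ideal $\bb \subseteq A$ is $S$-saturated if $sg \in \bb$ with $s \in S$ implies $g \in \bb$. The key observation is that for $S$ the multiplicative set generated by the single element $f$, being $S$-saturated is precisely the condition $\bb = \bb : f^\infty$, i.e.~$f$-saturated in the sense defined just above the lemma. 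So the lemma is essentially a special case of the standard localization correspondence, and the bulk of the proof is unwinding definitions.

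Concretely, I would define the two maps as in the statement: $\aa \mapsto \aa \cap \KK[T,U]$ going from ideals of the localization to ideals of $\KK[T,U]$, and $\bb \mapsto \bangle{\bb}_{\KK[T,U^{\pm 1}]} = \bb \cdot \KK[T,U^{\pm 1}]$ going the other way. Then I would verify three things. (a) The contraction $\aa \cap \KK[T,U]$ is always $f$-saturated: if $f^k g \in \aa \cap \KK[T,U]$, then since $f$ is a unit in $\KK[T,U^{\pm 1}]$ we get $g \in \aa$, and $g \in \KK[T,U]$ by hypothesis, so $g \in \aa \cap \KK[T,U]$. (b) For an $f$-saturated ideal $\bb$ we have $(\bb \cdot \KK[T,U^{\pm 1}]) \cap \KK[T,U] = \bb$: the inclusion $\supseteq$ is clear, and for $\supseteq$-reverse, an element of the extension lying in $\KK[T,U]$ can be written as $g = h/f^k$ with $h \in \bb$, hence $f^k g \in \bb$, hence $g \in \bb$ by $f$-saturation. (c) For an ideal $\aa$ of the localization, $(\aa \cap \KK[T,U]) \cdot \KK[T,U^{\pm 1}] = \aa$: again $\subseteq$ is clear, and for an arbitrary element of $\aa$, clear denominators to write it as $g/f^k$ with $g \in \aa \cap \KK[T,U]$, so it lies in the extension. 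These three checks give that the maps are mutually inverse bijections.

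For the final sentence about prime ideals, I would note that extension and contraction along a localization map always restrict to a bijection between prime ideals of $S^{-1}A$ and those primes of $A$ disjoint from $S$; here a prime $\bb \subseteq \KK[T,U]$ is disjoint from the multiplicative set generated by $f$ if and only if $f \notin \bb$ (since $\bb$ prime), which for a prime ideal is equivalent to $\bb$ being $f$-saturated. Indeed, if $\bb$ is prime and $f \notin \bb$, then $f^k g \in \bb$ forces $g \in \bb$; conversely if $f \in \bb$ then $\bb$ is not $f$-saturated unless $\bb = \KK[T,U]$, which is excluded for proper primes. So the restriction of the bijection to primes is exactly the stated correspondence.

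The proof presents no real obstacle; it is a routine application of the localization–saturation dictionary, and the only mild care needed is to match the paper's definition of $f$-saturated (via $\aa = \aa : f^\infty$) with the multiplicative-set notion of saturation, and to make sure the "clearing denominators" arguments in checks (b) and (c) are phrased correctly. If anything, the main thing to be careful about is that $f$ here is a product of several variables $U_i$ rather than a single variable, but since inverting $f$ is the same as inverting all the $U_i$ simultaneously, this changes nothing.
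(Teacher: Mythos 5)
Your proof is correct: it is the standard localization dictionary for the multiplicative set generated by $f$, with the observation that $S$-saturation for this set is exactly the condition $\bb = \bb : f^\infty$, and all three verifications (a)--(c) as well as the prime-ideal statement are carried out properly (modulo one typo where you write ``$\supseteq$-reverse'' for what should be $\subseteq$). The paper itself states this lemma without proof, as a recalled basic fact with references to Sturmfels and Jensen, and your argument is precisely the one those sources supply.
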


For transferring polynomials from $\KK[T_1,\ldots,T_{r_1}]$ 
to $\KK[T_1,\ldots,T_{r_2}]$ and vice versa, we 
use the following operations, compare also~\cite{GiMa}.
Consider a homomorphism $\pi \colon \TT^r \to \TT^n$ 
of tori and its kernel $H \subseteq \TT^r$.
\begin{itemize}
\item
By a {\em $\sharp$-pull back\/} of 
$g \in \KK[S_1^{\pm 1},\ldots,S_n^{\pm 1}]$ 
we mean a polynomial $\pi^\sharp g \in \KK[T_1,\ldots,T_r]$
with coprime monomials such that 
$\pi^*g$ and $\pi^\sharp g$ are associated in 
$\KK[T_1^{\pm 1},\ldots,T_r^{\pm 1}]$.
\item
By a {\em $\sharp$-push forward\/} of an $H$-homogeneous 
$h \in\KK[T_1^{\pm 1},\ldots,T_r^{\pm 1}]$ 
we mean a polynomial $\pi_\sharp h \in \KK[S_1,\ldots,S_n]$
with coprime monomials such that 
$h$ and $\pi^*\pi_\sharp h$ are associated in 
$\KK[T_1^{\pm 1},\ldots,T_r^{\pm 1}]$.
\end{itemize}
Note that $\sharp$-pull backs and $\sharp$-push forwards 
always exist and are unique up to constants.
The $\sharp$-pull back $\pi^\sharp g$ of a Laurent polynomial
is its usual pull back $\pi^* g$ scaled with a suitable 
monomial.
To compute the $\sharp$-push forward, factorize the 
describing $m \times n$
matrix of $\pi$ as $P = W \cdot D \cdot V$, where $W,V$ are 
invertible and $D$ is in Smith normal form. 
Then push forward with respect to the homomorphisms 
corresponding to the factors.

\goodbreak

\begin{lemma}
\label{lem:pushpullprops}
Consider 
a monomial dominant morphism 
$\pi\colon \KK^{n_1}\times \TT^{n_2} \to \KK^m$.
Write $T = (T_1,\ldots,T_{n_1})$
and $U = (U_1,\ldots,U_{n_2})$.
\begin{enumerate}
\item
If $\aa \subseteq \KK[T,U^{\pm 1}]$ is a prime ideal,
then $\bangle{\pi_\sharp \aa} \subseteq \KK[S^{\pm 1}]$ is a prime ideal.
\item 
If $\bb \subseteq \KK[S^{\pm 1}]$ is a radical ideal,
then $\bangle{\pi^\sharp \bb} \subseteq \KK[T,U^{\pm 1}]$ 
is a radical ideal.
\end{enumerate}
\end{lemma}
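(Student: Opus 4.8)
The plan is to analyze the $\sharp$-pull back and $\sharp$-push forward via the Smith normal form factorization $P = W \cdot D \cdot V$ of the matrix describing $\pi$, reducing each statement to the case where $\pi$ is built from three elementary types of monomial maps: isomorphisms of tori coming from $W$ and $V$, and a ``diagonal'' monomial map coming from $D$. For each type one checks the two claims directly. The key observation I would use throughout is that $\sharp$-operations differ from the honest $\pi^*$ and $\pi_*$ only by invertible monomial factors, so they do not affect primality or radicality of ideals in the Laurent polynomial rings, where the monomials in the $U$- and $S$-variables are units; this is essentially the content of Lemma~\ref{lem:satprops}, which identifies ideals in $\KK[T,U^{\pm 1}]$ with $f$-saturated ideals in $\KK[T,U]$ and matches up prime ideals on both sides.

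For part~(i), I would argue as follows. Since $\pi$ is a monomial dominant morphism, the comorphism $\pi^* \colon \KK[S^{\pm 1}] \to \KK[T^{\pm 1}, U^{\pm 1}]$ is an injective ring homomorphism onto a Laurent subalgebra, and dominance gives that $\pi^*$ is injective. Given a prime ideal $\aa \subseteq \KK[T, U^{\pm 1}]$, its preimage under the composite $\KK[S^{\pm 1}] \hookrightarrow \KK[T^{\pm 1}, U^{\pm 1}]$, followed by localization, relates to $\bangle{\pi_\sharp\aa}$; the precise claim is that $\bangle{\pi_\sharp\aa}$ is, up to the unit-monomial discrepancy, the contraction of the extension of $\aa$ along the dual lattice maps, hence prime because contractions of primes are prime and extensions along the relevant faithfully flat or localization maps preserve primeness. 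Concretely, after the $W$-$D$-$V$ reduction, the isomorphism factors are trivial to handle, and the $D$-factor reduces to checking that pushing a prime ideal forward along a map like $(t_1, \dots) \mapsto (t_1^{d_1}, \dots)$ of tori, i.e.\ a finite surjective homomorphism of tori, sends primes to primes after taking the appropriate coprime-monomial representative.

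For part~(ii), I would similarly reduce to the elementary factors and use that the $\sharp$-pull back $\pi^\sharp g$ is just $\pi^* g$ rescaled by a monomial; since monomials are units in $\KK[T, U^{\pm 1}]$ when... well, more carefully: $\pi^\sharp\bb$ consists of $\sharp$-pull backs of generators of $\bb$, and $\bangle{\pi^\sharp\bb}$ equals the extension $\bangle{\pi^*\bb}$ of $\bb$ in $\KK[T, U^{\pm 1}]$ up to the monomial discrepancy (which does not change the ideal, as the discrepancy monomials are units there only in the $U$-variables---so here one must be slightly careful and use that coprime-monomial normalization together with saturation handles the $T$-variables). The radical is then preserved because extension of ideals along the flat ring map induced by the torus homomorphism, composed with the faithfully flat localization $\KK[T,U] \to \KK[T, U^{\pm 1}]$, commutes with taking radicals for the maps in question; for the elementary $D$-factor this is the statement that pulling back a radical ideal along a finite map of tori in characteristic zero stays radical, which follows from separability.

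The main obstacle I anticipate is bookkeeping the monomial discrepancies correctly: the $\sharp$-normalization (coprime monomials) is exactly what makes the statement true on the nose, and one has to verify that after the Smith normal form reduction the discrepancy monomials that appear are genuinely units in the relevant ring (units in the $U$- or $S$-variables), so that they may be discarded without affecting the ideal. Put differently, the honest difficulty is not the primality/radicality transfer---which is standard once one is in the torus setting---but rather confirming that the $\sharp$-operations are compatible with the factorization $P = W D V$, i.e.\ that $\sharp$-push forward through a composite agrees (up to units) with the composite of $\sharp$-push forwards, and likewise for $\sharp$-pull back. Once that compatibility is in place, the proof is a short check on the three elementary building blocks.
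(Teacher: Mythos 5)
Your part~(i) is, underneath the Smith normal form scaffolding, the same one-line argument the paper uses: $\bangle{\pi_\sharp \aa} = (\pi^*)^{-1}(\aa)$, and contractions of prime ideals are prime. The factorization $P = W\cdot D\cdot V$ is not needed there.

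For part~(ii) you take a genuinely different route, and it has a gap exactly at the point you yourself flag as the main obstacle. Your flatness/separability argument (a dominant homomorphism of tori is faithfully flat with geometrically reduced fibres in characteristic zero) shows that the extension of $\bb$ to the \emph{full} Laurent ring $\KK[T^{\pm 1},U^{\pm 1}]$ is radical. To conclude that $\bangle{\pi^\sharp \bb}$ is radical in $\KK[T,U^{\pm 1}]$, where the $T_i$ are not units, you would still need that $\bangle{\pi^\sharp \bb}$ equals the contraction of that extension, i.e.\ that the ideal generated by the coprime-normalized pullbacks is already saturated with respect to $T_1\cdots T_{n_1}$; the phrase ``coprime-monomial normalization together with saturation handles the $T$-variables'' is an assertion, not an argument, and it is essentially the content of the lemma. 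The paper sidesteps all of this with a short equivariance argument: $\sqrt{\bangle{\pi^\sharp\bb}} = I(\pi^{-1}(V(\bb)))$ is invariant under $H := \ker(\pi|_{\TT^{n_1+n_2}})$, so an element $f$ of the radical may be assumed $H$-homogeneous; twisting by a character $\eta$ of the ambient torus extending its $H$-weight makes $\eta^{-1}f$ an $H$-invariant, hence an element of $\pi^*(I(V(\bb))) = \pi^*(\bb)$ by the Nullstellensatz and radicality of $\bb$, which gives $f \in \bangle{\pi^\sharp\bb}$ directly in $\KK[T,U^{\pm 1}]$. No Smith normal form reduction is used. If you wish to keep your reduction, you must still supply the saturation step for the elementary diagonal factors acting on the $T$-variables; as it stands, that step is missing.
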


\begin{proof}
The first statement follows from 
$\bangle{\pi_\sharp \aa} = (\pi^*)^{-1}(\aa)$.
To prove~(ii), let $f\in \sqrt{\bangle{\pi^\sharp\bb}}$.
Since $\sqrt{\bangle{\pi^\sharp \bb}} = I(\pi^{-1}(V(\bb)))$ 
is invariant under $H := \ker(\pi|_{\TT^{n_1+n_2}})$, 
we may assume that $f$ is $H$-homogeneous,
i.e.~$f(h \cdot z) = \chi(h)f(z)$ holds
with some character $\chi \in \XX(H)$.
Choose $\eta \in \XX(\TT^{n_1+n_2})$ with $\chi = \eta_{|H}$.
Then $\eta^{-1}f$ is $H$-invariant 
and thus belongs to $\pi^*(I(V(\bb)))$.
Hilbert's Nullstellensatz
and the assumption give $\pi^*(I(V(\bb))) = \pi^*(\bb)$.
We conclude $f \in \bangle{\pi^\sharp \bb}$.
\end{proof}

We are ready for the  algorithm
treating the contraction problem.
We enter a {\em weak CEMDS\/} 
$(P_2,\Sigma_2,G_2)$ in the sense that 
$G_2$ provides generators for the 
extension of $I_2$ to 
$\KK[T_1^{\pm 1},\ldots,T_{r_2}^{\pm 1}]$
and a toric contraction $Z_2 \to Z_1$,
encoded by $(P_1,\Sigma_1)$.

\begin{algorithm}
[ContractCEMDS]
\label{algo:contractcemds}
{\em Input: } a weak CEMDS $(P_2,\Sigma_2,G_2)$ and
a pair $(P_1,\Sigma_1)$, where $P_2 = [P_1,B]$ 
and $\Sigma_1$ is a coarsening of $\Sigma_2$ 
removing the rays through the columns of $B$.
\begin{itemize}
\item
Set $h_i := g_i(T_1, \ldots, T_{r_1},1, \ldots,1) \in \KK[T_1,\ldots,T_{r_1}]$,
where $G_2 = (g_1, \ldots, g_s)$.
\item
Compute a system of generators $G_1'$ for
$I_1' := \bangle{h_1,\ldots,h_s}:(T_1\cdots T_{r_1})^\infty$. 
\item
Set $(P_1',\Sigma_1',G_1') := (P_1,\Sigma_1,G_1')$
and reorder the variables such that the last $l$ 
relations of $G_1'$ are as in
Algorithm~\ref{algo:compresscemds}.
\item
Apply Algorithm~\ref{algo:compresscemds}
to $(P_1',\Sigma_1',G_1')$ and
write $(P_1,\Sigma_1,G_1)$ for the output.
\end{itemize}
{\em Output: } $(P_1,\Sigma_1,G_1)$. 
This is a CEMDS.
In particular, $R_1$ is the Cox ring of 
the image $X_1 \subseteq Z_1$ of $X_2 \subseteq Z_2$
under $Z_2 \to Z_1$.
\end{algorithm}

\begin{proof}
First we claim that in $\KK[T_1^{\pm 1},\ldots,T_{r_1}^{\pm 1}]$,
the ideal generated by $h_1,\ldots,h_s$ coincides 
with the ideal generated by 
$p_1^\sharp(p_2)_\sharp g_1, \ldots, p_1^\sharp(p_2)_\sharp g_r$.
To see this, consider $p_i \colon \TT^{r_i} \to \TT^n$
and let $S_1, \ldots, S_n$ be the variables on $\TT^n$.
Then the claim follows from
$(P_2)_{ij} = (P_1)_{ij}$ for $j \le r_1$ and 
$$ 
p_2^*(S_i) \ = \ T_1^{(P_2)_{i1}} \cdots T_{r_2}^{(P_2)_{ir_2}} ,
\qquad\qquad
p_1^*(S_i) \ = \ T_1^{(P_1)_{i1}} \cdots T_{r_1}^{(P_1)_{ir_1}}.
$$

As a consequence of the claim, we may apply 
Lemma~\ref{lem:pushpullprops} and obtain that 
$G_1'$ defines a radical ideal in 
$\KK[T_1^{\pm},\ldots, T_{r_1}^{\pm}]$.
Moreover, from Theorem~\ref{thm:ambientblow} we 
infer that $\rq{X}_1'$, defined as in 
Setting~\ref{set:ambmod}, is irreducible.
Since $G_1'$ has $\rq{X}_1' \cap \TT^{r_1}$ 
as its zero set, it defines a prime ideal 
in $\KK[T_1^{\pm},\ldots, T_{r_1}^{\pm}]$.
Lemma~\ref{lem:satprops} then shows that 
$I_1' \subseteq \KK[T_1,\ldots, T_{r_1}]$
is a prime ideal.
Using Theorem~\ref{thm:ambientblow} again, 
we see that  $(P_1',\Sigma_1',G_1')$ as defined 
in the third step of the algorithm is a CEMDS.
Thus, we may enter Algorithm~\ref{algo:compresscemds}
and end up with a CEMDS.
\end{proof}

We turn to the modification problem.
Given a Mori dream space $X_1$ with Cox ring $R_1$
and a modification $X_2 \to X_1$, we want to know 
if $X_2$ is a Mori dream space, and if so, we 
ask for the Cox ring $R_2$ of $X_2$.
Our algorithm verifies a guess of prospective
generators for $R_2$ and, if successful, computes the 
relations. In practice, the generators are added 
via Algorithm~\ref{algo:stretchcemds}.

\begin{algorithm}[ModifyCEMDS]
\label{algo:modifycemds}
{\em Input:} a weak CEMDS $(P_1,\Sigma_1,G_1)$, a pair 
$(P_2,\Sigma_2)$ with a matrix $P_2 = [P_1,B]$ 
and a fan $\Sigma_2$ having the columns of $P_2$ 
as its primitive generators and refining $\Sigma_1$.
{\em Options: } \texttt{verify}.
\begin{itemize}
\item
Compute $G_2' := (h_1,\ldots,h_s)$,
where $h_i = p_2^\sharp(p_1)_\sharp(g_i)$
and $G_1 = (g_1,\ldots,g_s)$.
\item 
Compute a system
of generators $G_2$ of
$I_2 := \bangle{h_1,\ldots,h_s}:(T_{r_1+1}\cdots T_{r_2})^\infty$. 
\item
If \texttt{verify} was asked then
\begin{itemize}
\item 
compute a Gale dual $Q_2 \colon \ZZ^{r_2} \to K_2$ 
of $P_2$,
\item
check if $\dim(I_2) - \dim(I_2 + \bangle{T_i,T_j})\geq 2$ 
for all $i\not= j$,
\item
check if $T_1,\ldots, T_{r_2}$ define $K_2$-primes
in $R_2$.
\item 
check if $R_2$ is normal, e.g.~using Remark~\ref{rem:K1primcrit}.
\end{itemize}
\end{itemize}
{\em Output: } $(P_2,\Sigma_2,G_2)$, if the 
\texttt{verify}-checks were all positive, 
this is a CEMDS.
In particular, then $R_2$ is the Cox ring of 
the strict transform $X_2 \subseteq Z_2$ of
$X_1 \subseteq Z_1$ with respect to $Z_2 \to Z_1$.
\end{algorithm}

\begin{proof}
We write shortly
$\KK[T,U^{\pm 1}]$ with the tuples
$T = (T_1,\ldots, T_{r_1})$
and
$U = (U_1,\ldots, U_{r_2-r_1})$
of variables.
Lemma~\ref{lem:pushpullprops} ensures that 
$G_2$ generates a radical ideal in 
$\KK[T, U^\pm]$.
To see that the zero set 
$V(G_2) \subseteq \KK^{r_1} \times \TT^{r_2 - r_1}$ 
is irreducible,
consider the situation of equation~\eqref{eq:ambientblowup} 
in the proof of Theorem~\ref{thm:ambientblow}.
There, in the right hand side diagram, we may lift
the homomorphisms of tori to
\begin{align*}
\xymatrix{
&
{\KK^{r_1}\times\TT^{r_2-r_1}}
\ar[dl]_{\mu}
\ar[dr]^{\alpha}
&
\\
{\KK^{r_1}\times\TT^{r_2-r_1}}
\ar[d]_{p_2}
&&
{\KK^{r_1}}
\ar[d]^{p_1}
\\
{\TT^{n}}
\ar[rr]_{\id}
&&
{\TT^{n}}
}
\end{align*}
Observe that we have an isomorphism
$\phi = \alpha \times \id$
given by
\[
\KK^{r_1}\times \TT^{r_2-r_1} \to \KK^{r_1}\times \TT^{r_2-r_1}
,\qquad
 (z,z')\ \mapsto\ 
 \left(
 z_1 (z')^{A_{1*}},
 \ldots,
 z_{r_1} (z')^{A_{r_1*}},
 z'
 \right)
 .
\]
Since $\b{X}_{1}$ is irreducible,
so is
$ \phi^{-1}(\b X_1 \times \TT^{r_2-r_1}) = \alpha^{-1}(\b{X}_{1})$.
Hence, the image 
$\mu(\alpha^{-1}(\b{X}_{1})) 
= 
\b X_2\cap \KK^{r_1}\times \TT^{r_2-r_1} =V(G_2)$ 
is irreducible as well.
Moreover, Lemma~\ref{lem:satprops} implies that 
$G_2$ generates a prime ideal in 
$\KK[T, U]$.
If the \texttt{verify}-checks were all positive,
then Theorem~\ref{thm:ambientblow} tells us that 
$(P_2,\Sigma_2,G_2)$ is a CEMDS.
\end{proof}

\begin{remark}
Compare Remark~\ref{rem:K1primcrit}.
If the canonical map $K_{2} \to K_{1}$ admits a section,
e.g.~if $K_{1}$ is free, then it suffices to check the 
variables $T_{r_{1}+1}, \ldots, T_{r_{2}}$ for 
$K_{2}$-primeness in $R_{2}$ in the verification step 
of Algorithm~\ref{algo:modifycemds}.
\end{remark}

\begin{remark}[Verify $K$-primality]
 Let $\KK[T_1,\ldots,T_{r}]$ be graded by a finitely generated
 abelian group $K$ and $I$ a $K$-homogeneous ideal.
 By~\cite[Prop. 3.2]{Ha2}, $T_k$ being $K$-prime
 in $\KT{r}/I$ is equivalent to the divisor of 
$T_k$ in $V(I)\subseteq \KK^r$ being $H:=\Spec\, \KK[K]$-prime. 
In computational terms, this means computing the prime
components of $I +\<T_k\>$ and testing
whether $H$ permutes them transitively.
See~\cite{absfact,Ber,GrePfi} for the
computational background.
There are also recent methods from
 numerical algebraic geometry~\cite{SoVeWa}.
\end{remark}

\section{Gorenstein log del Pezzo surfaces}
\label{section:picnr1}
As an application of Algorithm~\ref{algo:contractcemds},
we compute Cox rings of Gorenstein log 
terminal del Pezzo surfaces $X$ of Picard 
number one.
Here, ``del Pezzo'' means that $X$ is a normal 
projective surface with ample anticanonical 
divisor $-K_X$ and the conditions
``Gorenstein'' and ``log terminal'' 
together are equivalent 
to saying that $X$ has at most canonical 
singularities which in turn are 
precisely the rational double points 
(also called ADE or du Val singularities),
see for example~\cite[Thm.~4-6-7]{Mat}.

A classification of Gorenstein log terminal 
del Pezzo surfaces $X$ of Picard number one
according to the singularity type, i.e.~the 
configuration ${\rm S}(X)$ of singularities,
has been given in~\cite[Theorem~8.3]{AlNi}.
Besides $\PP_2$, there are four toric 
surfaces, namely the singularity types $A_1$, 
$A_1A_2$, $2A_1A_3$ and $3A_2$. Moreover, 
there are thirteen (deformation types of) 
$\KK^*$-surfaces; they represent 
the singularity types $A_4$, $D_5$, $E_6$, 
$A_12A_3$, $3A_1D_4$, $A_1D_6$, $A_2A_5$, $E_7$, 
$A_1E_7$, $A_2E_6$, $E_8$, $2D_4$ and
their Cox rings have been determined 
in~\cite[Theorem~5.6]{HaSu}.

We now compute the Cox rings of the remaining 
ones using Algorithm~\ref{algo:stretchcemds}
and the knowledge of generators of their 
resolutions~\cite{Der,AGL}; note that the 
relations for Cox rings of the resolutions
are still unknown in some of the cases.
In the sequel, we will write a Cox ring 
as a quotient $\KT{r}/I$ and specify
generators for the ideal $I$.
The $\Cl(X)$-grading is encoded 
by a {\em degree matrix\/}, i.e.~a
matrix with $\deg(T_1), \ldots, \deg(T_r) \in \Cl(X)$ 
as columns.

\begin{theorem}
\label{thm:gorensteinlogpezzos}
The following table lists the Cox rings of 
the Gorenstein log terminal del Pezzo surfaces 
$X$ of Picard number one that do not allow  
a non-trivial $\KK^*$-action.
\begingroup
\footnotesize
\begin{longtable}{p{.7cm}p{4.8cm}l}
\hline
${\rm S}(X)$  
&  
Cox ring $\mathcal{R}(X)$ 
& 
$\Cl(X)$ and degree matrix 
\\* 
\hline
\\
$2A_4$ 
&
$
\begin{array}{l}
\KK[T_1,\ldots,T_6]/I \text{ with $I$ generated by }
\\[1ex]
\scriptstyle
-T_{2}T_{5}+T_{3}T_{4}+T_{6}^{2},\
-T_{2}T_{4}+T_{3}^{2}+T_{5}T_{6},
\\
\scriptstyle
T_{1}T_{6}-T_{3}T_{5}+T_{4}^{2}, \
T_{1}T_{3}-T_{4}T_{6}+T_{5}^{2}, 
\\
\scriptstyle
T_{1}T_{2}-T_{3}T_{6}+T_{4}T_{5}
\end{array}
$
&
$
\begin{array}{l}
\ZZ\oplus \ZZ/5\ZZ
\\[1ex]
\mbox{\tiny$
\left[
\begin{array}{rrrrrr}
1 & 1 & 1 & 1 & 1 & 1
\\
\bar 2 & \bar 2 & \bar 3 & \bar 4 & \bar 0 & \bar 1 
\end{array}
\right]
$
}
\end{array}
$
\\
\\
\hline
\\
$D_8$ 
& 
$
\begin{array}{l}
\KK[T_1,\ldots,T_4]/I \text{ with $I$ generated by }
\\[1ex]
\scriptstyle
T_{1}^{2}-T_{4}^2T_{2}T_{3}+T_{4}^{4}+T_{3}^{4}
\end{array}
$
& 
$
\begin{array}{l}
\ZZ\oplus \ZZ/2\ZZ
\\[1ex]
\mbox{\tiny$
\left[
\begin{array}{rrrr}
2 & 1 & 1 & 1 \\
\bar 1 & \bar 1 & \bar 1 & \bar 0 
\end{array}
\right]
$
}
\end{array}
$ 
\\
\\
\hline
\\
$D_5A_3$ 
& 
$
\begin{array}{l}
\KK[T_1,\ldots,T_5] / I \text{ with $I$ generated by }
\\[1ex]
\scriptstyle
T_{1}T_{3}-T_{4}^{2}-T_{5}^{2}, \
T_{1}T_{2}-T_{3}^{2}+T_{4}T_{5}
\end{array}
$
&
$
\begin{array}{l}
\ZZ\oplus \ZZ/4\ZZ
\\[1ex]
\mbox{\tiny$
\left[
\begin{array}{rrrrr}
1 & 1 & 1 & 1 & 1\\
\bar 2 & \bar 2 & \bar 0 & \bar 3 & \bar 1\\
\end{array}
\right]
$
}
\end{array}
$
\\
\\
\hline
\\
$D_62A_1$ 
& 
$
\begin{array}{l}
\KK[T_1,\ldots,T_5] / I \text{ with $I$ generated by }
\\[1ex]
\scriptstyle
 T_{5}T_{2}-T_{5}^{2}+T_{3}^{2}+T_{4}^{2},\ 
-T_{2}^{2}+T_{5}T_{2}+T_{1}^{2}-T_{4}^{2}
\end{array}
$
& 
$
\begin{array}{l}
\ZZ \oplus \ZZ/2\ZZ \oplus \ZZ/2\ZZ
\\[1ex]
\mbox{\tiny $
     \left[
    \begin{array}{rrrrr}
    1 & 1 & 1 & 1 & 1 \\
    \bar 1 & \bar 0 & \bar 0 & \bar 1 & \bar 0 \\
    \bar 0 & \bar 1 & \bar 0 & \bar 1 & \bar 1
    \end{array}
    \right]
$
}
\end{array}
$
\\
\\
\hline
\\
$E_6A_2$
& 
$
\begin{array}{l}
\KK[T_1,\ldots,T_4] / I \text{ with $I$ generated by }
\\[1ex]
\scriptstyle
-T_{1}T_{4}^{2}+T_{2}^{3}+T_{2}T_{3}T_{4}+T_{3}^{3}
\end{array}
$
& 
$
\begin{array}{l}
\ZZ \oplus \ZZ/3\ZZ 
\\[1ex]
\mbox{\tiny $
\left[ 
\begin{array}{rrrrrr}
 1 & 1 & 1 & 1\\
\bar 1 & \bar 2 & \bar 0 & \bar 1
\end{array}
 \right]
$
}
\end{array}
$
\\
\\
\hline
\\
$E_7A_1$
& 
$
\begin{array}{l}
\KK[T_1,\ldots,T_4] / I \text{ with $I$ generated by }
\\[1ex]
\scriptstyle
-T_{1}T_{3}^{3}-T_{2}^{2}+T_{2}T_{3}T_{4}+T_{4}^{4}
\end{array}
$
& 
$
\begin{array}{l}
\ZZ \oplus \ZZ/2\ZZ
\\[1ex]
\mbox{\tiny$
\left[ 
\begin{array}{rrrrrr}
1 & 2 & 1 & 1\\
\bar 1 & \bar 1 & \bar 1 & \bar 0
\end{array}
 \right]
$
}
\end{array}
$
\\
\\
\hline
\\
$E_8$
& 
$
\begin{array}{l}
\KK[T_1,\ldots,T_4] / I \text{ with $I$ generated by }
\\[1ex]
\scriptstyle
  T_{1}^{3}+T_{1}^{2}T_{4}^{2}+T_{2}^{2}-T_{3}T_{4}^{5}
\end{array}
$
& 
$
\begin{array}{l}
\ZZ  
\\[1ex]
\mbox{\tiny$
\left[ 
\begin{array}{rrrrrr}
2 & 3 & 1 & 1
\end{array}
 \right]
$
}
\end{array}
$
\\
\\
\hline
\\
$A_7$ 
& 
$
\begin{array}{l}
\KK[T_1,\ldots,T_4] / I \text{ with $I$ generated by }
\\[1ex]
\scriptstyle
T_{1}^{2}-T_{4}T_{2}T_{3}+T_{4}^{4}+T_{3}^{4}
\end{array}
$
& 
$
\begin{array}{l}
\ZZ \oplus \ZZ/2\ZZ
\\[1ex]
\mbox{\tiny$
\left[ 
\begin{array}{rrrr}
2 & 2 & 1 & 1\\
\bar 1 & \bar 1 & \bar 1 & \bar 0
\end{array}
 \right]
$
}
\end{array}
$
\\
\\
\hline
 \\
$A_8$ 
& 
$
\begin{array}{l}
\KK[T_1,\ldots,T_4] / I \text{ with $I$ generated by }
\\[1ex]
\scriptstyle
-T_{1}T_{2}T_{3}+T_{2}^{3}+T_{3}^{3}+T_{4}^{3}
\end{array}
$
& 
$
\begin{array}{l}
\ZZ \oplus \ZZ/3\ZZ
\\[1ex]
\mbox{\tiny$
     \left[
    \begin{array}{rrrr}
    1 & 1 & 1 & 1 \\
    \bar 1 & \bar 2 & \bar 0 & \bar 1
    \end{array}
    \right]
$
}
\end{array}
$
\\
\\
\hline
\\
$A_7A_1$ 
& 
$
\begin{array}{l}
\KK[T_1,\ldots,T_5] / I \text{ with $I$ generated by }
\\[1ex]
\scriptstyle
-T_{2}T_{3}+T_{4}^{2}-T_{5}^{2},\ 
T_{1}^{2}-T_{3}^{2}+T_{4}T_{5}
\end{array}
$
& 
$
\begin{array}{l}
\ZZ \oplus \ZZ/4\ZZ
\\[1ex]
\mbox{\tiny$
     \left[
    \begin{array}{rrrrr}
    1 & 1 & 1 & 1 & 1\\
    \bar 2 & \bar 2 & \bar 0 & \bar 3 & \bar 1
    \end{array}
    \right]
$
}
\end{array}
$
\\
\\
\hline
\\
$A_5A_2A_1$
& 
$
\begin{array}{l}
\KK[T_1,\ldots,T_7]/I \text{ with $I$ generated by }
\\[1ex]
\scriptstyle
T_{5}^2+T_{6}^2-T_{7}T_{1}, \
T_{4}T_{5}+T_{6}T_{1}-T_{2}T_{6}-T_{7}^2,
\\
\scriptstyle
-T_{3}T_{6}-T_{5}T_{7}+T_{1}T_{4},\
T_{3}^2-T_{6}T_{1}+T_{7}^2,
\\
\scriptstyle
T_{1}T_{5}-T_{2}T_{5}-T_{4}T_{6}+T_{7}T_{3}, \
\\
\scriptstyle
T_{3}T_{4}-T_{6}^2+T_{7}T_{1}-T_{2}T_{7},
\\[1ex]
\text{The class group and degree matrix are}\\[1ex]
\ZZ\oplus \ZZ/6\ZZ
\end{array}
$
& 
$
\begin{array}{l}
\scriptstyle
T_{1}T_{3}-T_{2}T_{3}+T_{6}T_{5}-T_{7}T_{4},\
\\
\scriptstyle
T_{1}T_{2}-T_{2}^2-T_{4}^2+T_{3}T_{5},
\\
\scriptstyle
T_{1}^2-T_{2}^2-T_{4}^2+2T_{3}T_{5}-T_{7}T_{6}
\\[6ex]
\mbox{\tiny$
\left[
\begin{array}{rrrrrrr}
1 & 1 & 1 & 1 & 1 & 1 & 1 \\
\bar 2 & \bar 2 & \bar 3 & \bar 5 & \bar 1 & \bar 4 & \bar 0 
\end{array}
\right]
$
}
\end{array}
$
\\
\\
\hline
\\
$2A_32A_1$ 
& 
$
\begin{array}{l}
\KK[T_1,\ldots,T_9]/I \text{ with $I$ generated by }
\\[1ex]
\scriptstyle
-\frac{1}{2}T_{4}^{2}+T_{5}^{2}+\frac{1}{2}T_{7}T_{9},
\\ 
\scriptstyle
    -\frac{1}{2}T_{3}T_{8}-\frac{1}{2}T_{4}T_{5}+T_{6}^{2},
\\
\scriptstyle
    -\frac{1}{2}T_{3}T_{4}+T_{5}T_{8}+\frac{1}{2}T_{6}T_{9},
\\ 
\scriptstyle
    T_{2}T_{6}-T_{7}T_{9}-{4}T_{8}^{2},
\\
\scriptstyle
    T_{5}T_{2}-{2}T_{3}T_{7}+T_{8}T_{9},
\\ 
\scriptstyle
    -\frac{1}{4}T_{2}T_{4}+\frac{1}{4}T_{3}T_{9}+T_{7}T_{8},
\\
\scriptstyle
    T_{1}T_{7}+T_{2}T_{7}-{4}T_{3}T_{4}+{2}T_{6}T_{9},
\\ 
\scriptstyle
    T_{1}T_{6}-{2}T_{4}^{2}+T_{7}T_{9},
\\
\scriptstyle
    \frac{1}{2}T_{1}T_{6}-\frac{1}{2}T_{2}T_{6}+T_{3}^{2}-T_{4}^{2},
\\ 
\scriptstyle
    T_{1}T_{8}-{2}T_{4}T_{7}+T_{5}T_{9},
 \\[1ex]
 \text{The class group and degree matrix are}\\[1ex]
 \ZZ \oplus \ZZ/2\ZZ \oplus \ZZ/4\ZZ
\end{array}
$
&
$
 \begin{array}{l}
\scriptstyle
    T_{1}^{2}-{16}T_{4}T_{5}+{8}T_{7}^{2}-T_{9}^{2},
\\     
\scriptstyle
    T_{2}^{2}-{16}T_{3}T_{8}+{8}T_{7}^{2}-T_{9}^{2},
\\
\scriptstyle
    T_{2}T_{3}-T_{4}T_{9}+{4}T_{5}T_{7}-{8}T_{6}T_{8},
\\ 
\scriptstyle
    T_{1}T_{2}-{8}T_{7}^{2}-T_{9}^{2},
 \\
 \scriptstyle
    T_{1}T_{5}+{2}T_{3}T_{7}-{4}T_{4}T_{6}+T_{8}T_{9},
\\ 
\scriptstyle
    T_{1}T_{3}-T_{4}T_{9}-{4}T_{5}T_{7},
\\
\scriptstyle
    -\frac{1}{8}T_{4}T_{1}+\frac{1}{8}T_{2}T_{4}+T_{5}T_{6}-T_{7}T_{8},
\\
\scriptstyle
    -\frac{1}{16}T_{9}T_{1}+\frac{1}{16}T_{2}T_{9}-T_{4}T_{8}+T_{6}T_{7},  
\\
\scriptstyle
    -\frac{1}{8}T_{9}T_{1}+\frac{1}{8}T_{2}T_{9}+T_{3}T_{5}-T_{4}T_{8},
\\
\scriptstyle
    \frac{1}{4}T_{1}T_{8}-\frac{1}{4}T_{2}T_{8}+T_{6}T_{3}-T_{4}T_{7},
 \\[4ex]
    \mbox{\tiny$
      \left[
    \begin{array}{rrrrrrrrr}
    1 & 1 & 1 & 1 & 1 & 1 & 1 & 1 & 1 \\
    \bar 1 & \bar 1 & \bar 0 & \bar 1 & \bar 1 & \bar 1 & \bar 0 & \bar 0 & \bar 0 \\
    \bar 3 & \bar 3 & \bar 2 & \bar 0 & \bar 2 & \bar 1 & \bar 3 & \bar 0 & \bar 1
    \end{array}
    \right]
    $}
 \end{array}
$
\\
\\
\hline
\\
$4A_2$ 
& 
$
\begin{array}{l}
\KK[T_1,\ldots,T_{10}] / I \text{ with $I$ generated by }\\[1ex]
\scriptstyle
    {3}T_{3}T_{6}+{3}T_{4}T_{7}\zeta +(-{3}\zeta -{3})T_{5}T_{8},\\ \scriptstyle 
    (\zeta -{1})T_{2}T_{8}+{3}T_{3}^{2}+(-\zeta -{2})T_{6}T_{9},\\ \scriptstyle 
    {3}T_{2}T_{7}\zeta +{3}T_{6}T_{10}+(-{3}\zeta -{3})T_{8}T_{9},\\ \scriptstyle 
    (-\zeta +{1})T_{2}T_{5}+(\zeta -{1})T_{4}T_{9}+{3}T_{6}T_{8},\\ \scriptstyle 
    -\zeta T_{1}T_{10}+T_{2}T_{10}\zeta +{3}T_{4}T_{7}-{3}T_{5}T_{8},\\ \scriptstyle 
    (\zeta +{1})T_{1}T_{10}-T_{2}T_{10}\zeta +{3}T_{5}T_{8}-T_{9}^{2},\\  \scriptstyle 
    -T_{1}T_{9}\zeta -T_{2}T_{9}+{3}T_{3}T_{7}+(\zeta +{1})T_{10}^{2},\\ \scriptstyle 
    -T_{1}T_{9}\zeta +T_{2}T_{9}\zeta +{3}T_{3}T_{7}-{3}T_{8}T_{4},\\  \scriptstyle 
    (-\zeta +{1})T_{1}T_{8}+(\zeta -{1})T_{2}T_{8}+{3}T_{3}^{2}-{3}T_{4}T_{5},\\ \scriptstyle 
    (\zeta +{2})T_{1}T_{7}+(-\zeta -{2})T_{2}T_{7}+{3}T_{4}T_{3}-{3}T_{5}^{2},\\
    \scriptstyle 
    T_{2}T_{1}+(-\zeta -{1})T_{2}^{2}+{3}T_{8}T_{3}+T_{9}T_{10}\zeta,\\ \scriptstyle 
    -\zeta  T_{1}T_{2}+{3}T_{4}T_{6}+T_{9}T_{10}\zeta ,\\ \scriptstyle 
    T_{1}^{2}+(-\zeta -{1})T_{1}T_{2}+{3}T_{5}T_{7}+T_{9}T_{10}\zeta
    \\[1ex]
    \text{The class group and degree matrix are}\\[1ex]
    \ZZ \oplus \ZZ/3\ZZ \oplus \ZZ/3\ZZ
    \end{array}
$
& 
$
 \begin{array}{l}
 \scriptstyle 
    (-\zeta -{1})T_{1}T_{9}+(\zeta +{1})T_{2}T_{9}-{3}T_{8}T_{4}+{3}T_{5}T_{6},\\
    \scriptstyle 
    (-{2}\zeta -{1})T_{1}T_{8}+(\zeta -{1})T_{2}T_{8}+{3}T_{3}^{2}+(\zeta -{1})T_{7}T_{10},
    \\
    \scriptstyle 
    -{3}T_{1}T_{6}+({3}\zeta +{3})T_{2}T_{6}+(-{3}\zeta -{3})T_{7}T_{9}+{3}T_{8}T_{10},
    \\
    \scriptstyle 
    (-\zeta -{2})T_{1}T_{7}+({2}\zeta +{1})T_{2}T_{7}+{3}T_{5}^{2}+(-\zeta -{2})T_{8}T_{9},
    \\ 
    \scriptstyle
    (-\zeta -{2})T_{1}T_{6}+({2}\zeta +{1})T_{2}T_{6}+{3}T_{3}T_{5}+(-\zeta -{2})T_{7}T_{9},\\ \scriptstyle 
    ({2}\zeta +{1})T_{1}T_{6}+(-{2}\zeta -{1})T_{2}T_{6}-{3}T_{3}T_{5}+{3}T_{4}^{2},\\ \scriptstyle 
    (-\zeta +{1})T_{1}T_{5}+(-\zeta -{2})T_{2}T_{5}+({2}\zeta +{1})T_{3}T_{10}+{3}T_{7}^{2},\\ \scriptstyle 
    (-\zeta +{1})T_{1}T_{5}+(\zeta -{1})T_{2}T_{5}-{3}T_{6}T_{8}+{3}T_{7}^{2},\\ \scriptstyle 
    -{3}T_{1}T_{4}+({3}\zeta +{3})T_{2}T_{4}+(-{3}\zeta -{3})T_{3}T_{9}+{3}T_{5}T_{10},\\ \scriptstyle 
    (-{2}\zeta -{1})T_{1}T_{4}+({2}\zeta +{1})T_{5}T_{10}+{3}T_{6}^{2},\\ \scriptstyle 
    (\zeta +{2})T_{1}T_{4}+(-{2}\zeta -{1})T_{2}T_{4}+(\zeta -{1})T_{3}T_{9}+{3}T_{7}T_{8},\\ \scriptstyle 
    (-\zeta +{1})T_{1}T_{3}+(\zeta -{1})T_{5}T_{9}+{3}T_{6}T_{7},\\ \scriptstyle 
    {3}\zeta  T_{1}T_{3}+{3}T_{4}T_{10}+(-{3}\zeta -{3})T_{5}T_{9},\\ \scriptstyle 
    (\zeta +{2})T_{1}T_{3}+(-{2}\zeta -{1})T_{2}T_{3}+(\zeta -{1})T_{5}T_{9}+{3}T_{8}^{2},\\[1ex]
    \text{where $\zeta$ is a primitive third root of unity.}\\[2ex]
    \mbox{\tiny$
     \left[
    \begin{array}{rrrrrrrrrr}
    1 & 1 & 1 & 1 & 1 & 1 & 1 & 1 & 1 & 1 \\
    \bar  2 &\bar  2 &\bar  1 &\bar  0 &\bar  2 &\bar  1 &\bar  2 &\bar  0 &\bar  1 &\bar  0 \\
    \bar  1 &\bar  1 &\bar  2 &\bar  2 &\bar  2 &\bar  0 &\bar  0 &\bar  0 &\bar  1 &\bar  1
    \end{array}
    \right].
$
}
\end{array}
$
\\
\\
\hline
\end{longtable}
\endgroup
\end{theorem}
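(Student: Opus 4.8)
The plan is to treat each singularity type ${\rm S}(X)$ occurring in \cite[Thm.~8.3]{AlNi} by means of a diagram $\PP_2 \leftarrow \t X \to X$, where $\rho \colon \t X \to \PP_2$ exhibits $\t X$ as the minimal resolution of $X$, obtained from the plane by a sequence of blow-ups, and $\pi \colon \t X \to X$ contracts the curves lying over the du Val points. Since $X$ has only canonical, hence rational, surface singularities, it is $\QQ$-factorial and its Cox sheaf is locally of finite type, and since the resolution $\pi$ is crepant, $\t X$ is a weak del Pezzo surface, in particular a Mori dream space; by Proposition~\ref{prop:coximage} it follows that $\mathcal{R}(X)$ is finitely generated, being the image of $\mathcal{R}(\t X)$ under the map that sets the sections of the contracted $(-2)$-curves equal to $1$. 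The determination of $\mathcal{R}(X)$ will then be a single application of Algorithm~\ref{algo:contractcemds} to a weak CEMDS for $\t X$ and to the toric datum describing $\pi$.

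First I would fix the plane model in each case: the configuration of the (possibly infinitely near) points blown up by $\rho$, the dual graph of negative curves of $\t X$, and the splitting of these curves into $(-1)$- and $(-2)$-curves are read off from \cite{AlNi} and from Derenthal's analysis \cite{Der}, the $(-2)$-curves being precisely the ones contracted by $\pi$ and spanning the lattice ${\rm S}(X)$. Next I would assemble a \emph{weak} CEMDS $(P_2,\Sigma_2,G_2)$ for $\t X$ whose coordinates are the generators of $\mathcal{R}(\t X)$ recorded in \cite{AGL,Der,HaTs} — for the degree-one resolutions the classes of the finitely many negative curves already generate, while in the other cases one adds the class of a suitable conic; their $\Cl(\t X)$-degrees give $P_2$ and, with an ample class, the fan $\Sigma_2$. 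For the equations $G_2$ one only needs a generating set of the extension of the relation ideal to $\KK[T_1^{\pm 1},\ldots,T_{r_2}^{\pm 1}]$, which I would produce by starting from the trivial CEMDS $\mathcal{R}(\PP_2) = \KK[T_1,T_2,T_3]$, recording the relevant proper transforms of plane curves as further coordinates with Algorithm~\ref{algo:stretchcemds}, and performing the blow-ups of $\rho$ with Algorithm~\ref{algo:modifycemds}. Here the \texttt{verify} option is deliberately not used: the proof of Algorithm~\ref{algo:modifycemds} shows that the resulting $G_2$ defines a prime ideal in the Laurent polynomial ring regardless, so $(P_2,\Sigma_2,G_2)$ is a legitimate weak CEMDS for $\t X$, and one never has to decide whether it is already the full Cox ring of $\t X$ — in particular the global relations of $\mathcal{R}(\t X)$, which are unknown in several of the cases, are not needed.

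Then I would run Algorithm~\ref{algo:contractcemds} on $(P_2,\Sigma_2,G_2)$ and the pair $(P_1,\Sigma_1)$, where $P_1$ is obtained from $P_2 = [P_1,B]$ by deleting the columns of the contracted $(-2)$-curves and $\Sigma_1$ is the coarsening of $\Sigma_2$ that removes the corresponding rays; these describe the toric variety into which $X$ is compatibly embedded. Substituting $1$ for the contracted coordinates, saturating by the product of the remaining variables, and applying Algorithm~\ref{algo:compresscemds} with the \texttt{verify} option yields a CEMDS $(P_1,\Sigma_1,G_1)$; by the correctness of Algorithm~\ref{algo:contractcemds}, which rests on Theorem~\ref{thm:ambientblow} and Proposition~\ref{prop:coximage}, the ring $\KK[T_1,\ldots,T_{r_1}]/I_1$ is the Cox ring of $X$. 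Reading off the surviving coordinates, the generators $G_1$ of $I_1$, and the degree matrix from a Gale dual $Q_1$ of $P_1$ then produces the rows of the table, the torsion of $\Cl(X)$ appearing as $\Cl(\t X)$ modulo the classes of the contracted curves and accounting for the finite groups listed. Completeness of the list is guaranteed by \cite[Thm.~8.3]{AlNi}, together with \cite[Thm.~5.6]{HaSu} and \cite{cox} which cover the $\KK^*$-surface and toric cases excluded here.

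The hard part will be the cases where $\t X$ carries many negative curves, so that $r_2$ is large: the saturation $\langle h_1,\ldots,h_s\rangle : (T_1\cdots T_{r_1})^\infty$ inside Algorithm~\ref{algo:contractcemds}, and within the subsequent \texttt{verify} the transitive-permutation test certifying that the surviving coordinates define $\Cl(X)$-primes, are the computationally expensive points. Keeping them feasible requires an economical choice of plane model and of generating set — retaining only those negative curves that are actually needed after the contraction, and exploiting the symmetry of the point configuration, which is also what accounts for the visibly symmetric presentations obtained, for instance, for $2A_4$, $4A_2$ and $2A_32A_1$.
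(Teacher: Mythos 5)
Your proposal follows essentially the same route as the paper: present the minimal resolution $\t{X}$ as a \emph{weak} CEMDS using the known generators from \cite{AGL,Der,HaTs} (thereby avoiding the full, partly unknown, relation ideal of $\mathcal{R}(\t{X})$), and then obtain $\mathcal{R}(X)$ by Algorithm~\ref{algo:contractcemds}; the only cosmetic difference is that the paper produces the torus equations $G_2'$ directly as $p_2^\sharp(p_1)_\sharp f_i$ rather than running the saturation step of Algorithm~\ref{algo:modifycemds}. Your justification that the \texttt{verify} option can be skipped because the weak CEMDS property suffices is exactly the paper's reasoning.
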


\begin{proof}
First recall from~\cite[Theorem~8.3]{AlNi}
that the isomorphy classes of Gorenstein log del Pezzo 
surfaces of Picard number one that do not allow a 
$\KK^*§$-action correspond bijectively to singularity types 
we listed.

Each surface $X$ of the list is obtained by contracting curves 
of a smooth surface $X_2$ arising as a blow up of $\PP_2$ 
with generators for the Cox ring known by~\cite{Der,AGL}.
A direct application of Algorithms~\ref{algo:modifycemds}
and~\ref{algo:contractcemds} is not always feasible.
However, we have enough information
to present the blow ups of $\PP_2$ as a weak CEMDS.
As an example, we treat the $D_5A_3$-case.
By~\cite{AGL}, with $X_2 := X_{141}$, additional 
generators for $\Cox(X_2)$ correspond in $\Cox(\PP_2)$ to
\[
 f_1 := T_1-T_2,\qquad\qquad
 f_2 := T_1T_2-T_2^2+T_1T_3.
\]
Using Algorithm~\ref{algo:stretchcemds} with input the CEMDS $\PP_2$
and $(f_1,f_2)$, we obtain a CEMDS $X_1$.
Again by~\cite[Sec.~6]{AGL}, we know the degree matrix $Q_2$
of $X_2$.
Write $Q_2 = [D,C]$ with submatrices
$D$ and $C$ consisting of the first $r_1$ and
the last $r_2-r_1$ columns respectively. 
We compute a Gale dual matrix $P_2$ 
of the form $P_2 = [P_1, B]$ by solving 
 $CB^t = -DP_1^t$.
Let
$p_1\colon \TT^5\to \TT^4$ and
$p_2\colon \TT^{14}\to \TT^4$ 
 be the maps of tori 
corresponding to $P_1$ and $P_2$.
Instead of using Algorithm~\ref{algo:modifycemds},
we directly produce the equations $G_2'$
for $X_2$ on the torus:
\begin{align*}
 p_2^\sharp (p_1)_\sharp \,f_1 \ &=\  
 T_1T_6T_7T_8T_{14}-T_2T_{10}T_{11}^2-T_3T_{12}T_{13}^2,\\
 p_2^\sharp (p_1)_\sharp \,f_2\  &=\ 
 T_1T_4T_{14}^2+T_2T_3T_9T_{11}T_{13}-T_5T_6^2T_7.
\end{align*}
Note that by~\cite{AGL}, the variables define pairwise non-associated 
$\Cl(X_2)$-prime generators for $\Cox(X_2)$.
This makes $X_2$ a weak CEMDS with data
$(P_2,\Sigma_2,G_2')$, where $\Sigma_2$ is the stellar subdivision
of the fan $\Sigma_1$ of the CEMDS $X_1$ at the columns of $B$.
We now use Algorithm~\ref{algo:contractcemds} to
contract on $X_2$ the curves corresponding to the variables
$T_i$ with $i\in\{ 2,3,5,7,8,9,10,12,14\}$.
The resulting ring is the one listed in the table of 
the theorem.
\end{proof}

\begin{remark}
The minimal resolutions $\t{X}$ of the 
surfaces $X$ listed in 
Theorem~\ref{thm:gorensteinlogpezzos}
arise from the plane $\PP_2$ by blowing
up $9-d$ points in almost general position,
where $d$ is the \emph{degree} of the weak
del Pezzo surface $\t{X}$.
In the case of singularity type $A_7$
we obtain degree~2 and in all other cases 
we have degree~1.
\end{remark}

\begin{remark}
The surfaces with singularity type $E_6A_2$, $E_7A_1$ 
and $E_8$ are the only ones in the list 
Theorem~\ref{thm:gorensteinlogpezzos}
for which the minimal resolution has a 
hypersurface as Cox ring, see~\cite[Sect.~3, Table~9]{Der}.
Moreover, these surfaces 
admit small degenerations into $\KK^*$-surfaces. 
In fact, multiplying the monomials $T_2T_3T_4$
and $T_1^2T_4^2$ in the respective Cox rings with a 
parameter $\alpha \in \KK$ gives rise to a flat family 
of Cox rings over $\KK$. 
The induced flat family of surfaces over $\KK$ has
a $\KK^*$-surface as zero fiber, compare also
the corresponding Cox rings listed in~\cite[Theorem~5.6]{HaSu}.
\end{remark}

\begin{remark}
The surfaces of singularity type $A_7$, $A_8$ and $D_8$ 
have hypersurface Cox rings but their resolutions do 
not, see again~\cite[Sect.~3, Table~9]{Der}. For example, the Cox ring of the minimal resolution
of the surface with a $D_8$ singularity is 
$\KK[T_1,\ldots,T_{14}]/I$, where $I$ is generated by
\begingroup
\footnotesize
\begin{gather*}
T_{1}T_{2}T_{11}^4T_{12}^4T_{13}^3T_{14}^2 + T_{3}^2T_{10} - T_{4}T_{7}^2T_{8},
\qquad
T_{1}T_{4} + T_{2}^3T_{7}^2T_{8}^3T_{9}^4T_{10}^2T_{13}T_{14}^2 - T_{6}T_{11},
\\
T_{1}^2T_{2}T_{11}^3T_{12}^4T_{13}^3T_{14}^2 + T_{5}T_{10} - T_{6}T_{7}^2T_{8},
\qquad
T_{1}T_{3}^2 + T_{2}^3T_{7}^4T_{8}^4T_{9}^4T_{10}T_{13}T_{14}^2 - T_{5}T_{11},
\\
T_{1}T_{2}^4T_{7}^2T_{8}^3T_{9}^4T_{10}T_{11}^3T_{12}^4T_{13}^4T_{14}^4
+ T_{3}^2T_{6} - T_{4}T_{5}
\end{gather*}
\endgroup
and the degree matrix of the $\ZZ^9$-grading is given by
$$
\mbox{\tiny $
\left[
\begin{array}{rrrrrrrrrrrrrr}
-1 & -1 &  0 & 0 & -1 & -1 & 0 & 0 & 0 & 0 & 0 & 0 & 0 &  1
\\
-1 & 0 & 0 & 0 & -1 & -1 & 0 & 0 & 0 & 0 & 0 & 0 & 1 & -1
\\
-1 & 0 & 0 & 0 & -1 & -1 & 0 & 0 & 0 & 0 & 0 & 1 & -1 &  0
\\ 
0 & 0 & 0 & 0 & -1 & -1 & 0 & 0 & 0 & 0 & 1 & -1 & 0 &  0
\\
1 & 1 & 1 & 2 & 3 & 3 & 0 & 0 & 0 & 0 & 0 & 0 & 0 &  0
\\
0 & -1 & -1 & -1 & -2 & -1 & 0 & 0 & 0 & 1 & 0 & 0 & 0 &  0
\\ 
0 & -1 & 0 & -1 & 0 & -1 & 0 & 0 & 1 & -1 & 0 & 0 & 0 &  0
\\
0 & 0 & 0 & -1 & 0 & -1 & 0 & 1 & -1 & 0 & 0 & 0 & 0 & 0
\\
0 & 0 & 0 & -1 & 0 & -1 & 1 & -1 & 0 & 0 & 0 & 0 & 0 &  0
\end{array}
\right]
$}.
$$
\noindent
We remark here that the computation of the Cox rings for
the minimal resolutions of the surfaces with singularity 
type $A_7$ and $A_8$ were not feasible on our systems.
\end{remark}

\begin{remark}
\label{rem:anticanmod}
Given a CEMDS $X \subseteq Z$ with Cox ring $R$
and a divisor class $w \in \Cl(X)$, consider 
the graded ring 
$$
R(w) 
\ = \ 
\bigoplus_{m \in \ZZ_\ge 0} R_{mw}
\ \subseteq  \ 
R.
$$
Given monomial generators $f_1, \ldots, f_s$ 
of a Veronese subalgebra $R(dw) \subseteq R(w)$,
where $d > 0$, one obtains homogeneous 
equations for $\Proj(R(w))$ by computing the 
closure of the image of $\b{X}$ under the 
toric morphism 
$$
\b{Z} \ \to \ \KK^s, 
\qquad
z \ \mapsto \ (f_1(z), \ldots, f_s(z)),
$$
where $\b{Z}$ denotes the ambient toric total 
coordinate space.
If the canonical class of $X$ is known, e.g.~if 
$R$ is a complete intersection Cox ring~\cite[Prop.~4.15]{Ha2},
one obtains this way equations for anticanonical 
models. 
For the surfaces $X$ of Theorem~\ref{thm:gorensteinlogpezzos}
with a minimal resolution $\t{X}$ of degree 1,
the anticanonical model is always a hypersurface in 
$\PP(1,1,2,3)$. 
For example, we have the following equations
(listed by singularity type):
$$ 
\begin{array}{ll}
2A_4 \colon
& 
\scriptstyle 
T_{1}^2T_{2}^2T_{3} + T_{1}^2T_{2}T_{4} - 2T_{1}T_{2}T_{3}^2
-T_{1}T_{3}T_{4} + T_{2}T_{3}T_{4} + T_{3}^3 + T_{4}^2
\\[3pt]
D_8 \colon
&
\scriptstyle 
T_{1}^4T_{3} - T_{1}T_{2}T_{3}^2 + T_{3}^3 + T_{4}^2
\\[3pt]
D_5A_3 \colon
&
\scriptstyle 
T_{1}^2T_{2}T_{4} - T_{1}T_{2}T_{3}^2 + T_{1}T_{3}T_{4} -
T_{3}^3 - T_{4}^2
\\[3pt]
D_62A_1 \colon
&
\scriptstyle 
T_{1}^3T_{2}T_{3} - 3T_{1}^2T_{2}^2T_{3} + T_{1}^2T_{3}^2 +
3T_{1}T_{2}^3T_{3} - 3T_{1}T_{2}T_{3}^2 - T_{2}^4T_{3} +
2T_{2}^2T_{3}^2 - T_{3}^3 + T_{4}^2
\\[3pt]
E_6A_2 \colon
&
\scriptstyle 
T_{1}^2T_{2}T_{4} - T_{1}T_{3}T_{4} - T_{3}^3 - T_{4}^2
\\[3pt]
E_7A_1 \colon
&
\scriptstyle 
T_{1}^3T_{2}T_{3} - T_{1}T_{3}T_{4} - T_{3}^3 + T_{4}^2
\\[3pt]
E_8 \colon
&
\scriptstyle 
T_{1}T_{2}^5 - T_{2}^2T_{3}^2 - T_{3}^3 - T_{4}^2
\\[3pt]
A_8 \colon
&
\scriptstyle 
T_{1}^3T_{4} - T_{2}T_{3}T_{4} + T_{3}^3 + T_{4}^2
\\[3pt]
A_7A_1 \colon
&
\scriptstyle 
T_{1}^2T_{2}T_{4} - T_{1}^2T_{3}^2 + T_{2}T_{3}T_{4} - T_{3}^3 + T_{4}^2
\end{array}
$$
\end{remark}

\section{The lattice ideal method}
\label{section:latticeideal}
We consider the blow up of a given Mori dream 
space with known Cox ring and develop a method 
to produce systematically
generators for the new Cox ring.
The key step is a description of the Cox 
ring of a blow up as a saturated Rees algebra.

Let $X_1$ be a Mori dream space and 
$\pi \colon X_2 \to X_1$ the blow up of 
an irreducible subvariety $C \subseteq X_1$ 
contained in the smooth locus of $X_1$.
As before, write $K_i := \Cl(X_i)$ for the divisor 
class groups and $R_i := \mathcal{R}(X_i)$ for 
the Cox rings.
Then we have the canonical pullback maps 
$$ 
\pi^* \colon K_1 \to K_2, \ [D] \mapsto [\pi^*D],
\qquad
\pi^* \colon R_1 \to R_2, 
\ 
(R_1)_{[D]} \ni f \mapsto \pi^*f \in (R_2)_{[\pi^*D]}.
$$
Moreover, identifying $U := X_2 \setminus \pi^{-1}(C)$ with 
$X_1 \setminus C$, we obtain canonical push forward maps
$$ 
\pi_* \colon K_2 \to K_1, \ [D] \mapsto [\pi_*D],
\qquad
\pi_* \colon R_2 \to R_1, 
\ 
(R_2)_{[D]} \ni f \mapsto f_{\vert U} \in (R_1)_{[\pi_*D]}.
$$
Let $J \subseteq R_1$ be the irrelevant ideal,
i.e.~the vanishing ideal of $\b{X}_1 \setminus \rq{X}_1$,
and $I  \subseteq R_1$ the vanishing ideal of 
$p_1^{-1}(C) \subseteq \b{X}_1$
with the  characteristic space
$p_1\colon \rq X_1 \to X_1$.
We define the {\em saturated Rees algebra}
to be the subalgebra
$$ 
R_1[I]^{\rm sat}
\ := \ 
\bigoplus_{d \in \ZZ} (I^{-d}:J^\infty) t^d
\ \subseteq \ 
R_1[t^{\pm 1}],
\qquad
\text{where }
I^k := R_1
\text{ for } 
k \le 0.
$$

\begin{remark}
The usual Rees algebra $R_1[I] = \oplus_{\ZZ} I^ {-d}t^d$ is 
a subalgebra of the saturated Rees algebra $R_{1}[I]^{\rm sat}$.
In the above situation, $I \subseteq R_1$ is a $K_1$-prime
ideal and thus we have $I : J^{\infty} = I$.
Consequently, $R_{1}[I]^{\rm sat}$ equals $R_{1}[I]$
if and only if $R_{1}[I]^{\rm sat}$ is generated in the 
$\ZZ$-degrees $0$ and $\pm 1$.
In this case, $R_{1}[I]^{\rm sat}$ is finitely 
generated because $R_1[I]$ is so.
\end{remark}

Note that the saturated Rees algebra $R_1[I]^{\rm sat}$ is naturally 
graded by $K_1 \times \ZZ$ as $R_{1}$ is $K_{1}$-graded and 
the ideals $I$, $J$ are homogeneous.
Let $E = \pi^{-1}(C)$ denote the exceptional divisor.
Then we have a splitting $K_2 = \pi^*K_1 \times \ZZ \cdot [E]
\cong K_1 \times \ZZ$.

\begin{proposition}
\label{prop:reesalg}
In the above situation, we have the following 
mutually inverse isomorphisms of graded 
algebras
\begin{eqnarray*}
R_2 
& \longleftrightarrow & 
R_1[I]^{\rm sat},
\\
(R_2)_{[\pi^*D]+d[E]} \ni f
& \mapsto & 
\pi_*f \cdot t^d \in (R_1[I]^{\rm sat})_{([D],d)},
\\
(R_2)_{[\pi^*D]+d[E]} \ni \pi^* f \cdot 1_E^d
& \mapsfrom & 
f \cdot t^d \in (R_1[I]^{\rm sat})_{([D],d)}.
\end{eqnarray*}
\end{proposition}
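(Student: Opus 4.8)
The plan is to verify that the two assignments in the statement are well-defined maps of $K_1 \times \mathbb{Z}$-graded algebras, that they are mutually inverse, and that this forces them to be isomorphisms. First I would set up the geometry carefully: on the open set $U = X_2 \setminus E \cong X_1 \setminus C$, the two Cox rings $R_1$ and $R_2$ agree after restriction, and $\pi^*$, $\pi_*$ are inverse to each other on the corresponding pieces; this is essentially the content already recorded in Proposition~\ref{prop:coximage} and the discussion of $\pi^*$, $\pi_*$ preceding the statement. The key point to pin down is the identification of the graded piece $(R_2)_{[\pi^*D] + d[E]}$ for $d \ge 0$. A section $f$ of $\mathcal{O}_{X_2}(\pi^*D + dE)$ vanishes to order at least $d$ along $E$, hence its restriction to $U$, viewed as a rational section of $\mathcal{O}_{X_1}(D)$, extends to a genuine section lying in the ideal power $I^d$ after passing to the characteristic space; conversely, multiplying by the canonical section $1_E^d$ of $\mathcal{O}_{X_2}(dE)$ recovers $f$. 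For $d < 0$ one has $I^{d} = R_1$ by convention and the matching is just the isomorphism on $U$ together with the fact that $\pi^*D - |d| E$ has the same global sections as $\pi^*D$ (sections automatically vanish along $E$ because $C$ has codimension $\ge 2$).

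Concretely, I would argue at the level of characteristic spaces. Write $p_1 \colon \widehat X_1 \to X_1$ and note that $\widehat X_2 = \mathrm{Spec}_{X_2}\mathcal{R}_2$ is obtained from $\widehat X_1$ by blowing up $p_1^{-1}(C)$ and taking the relative spectrum; the exceptional data is encoded by the ideal $I \subseteq R_1$ and the irrelevant ideal $J$ controls the passage from the affine cone to the characteristic (quasi-affine) space. The identity
$$
\Gamma\bigl(X_2, \mathcal{O}_{X_2}(\pi^*D + dE)\bigr)
\ = \
\bigl(I^{-d} : J^\infty\bigr)_{[D]} \, t^d
$$
for every $d \in \mathbb{Z}$ and every class $[D] \in K_1$ is what I must establish; once it is in place, summing over $d$ and $[D]$ gives the graded-algebra isomorphism, and the saturation by $J^\infty$ is exactly what accounts for sections of $\mathcal{R}_2$ being defined on the quasi-affine $\widehat X_2$ rather than on the total coordinate space. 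Multiplicativity of the bijection — that $f \mapsto \pi_* f \cdot t^d$ respects products — is immediate on $U$ since both $\pi_*$ and $t^{\bullet}$ are multiplicative, and the Rees-algebra product $(I^a t^a)(I^b t^b) \subseteq I^{a+b} t^{a+b}$ is the combinatorial shadow of this; compatibility with the grading groups follows from the splitting $K_2 = \pi^* K_1 \oplus \mathbb{Z}[E] \cong K_1 \times \mathbb{Z}$ stated just before the proposition.

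The two maps are visibly inverse to each other on each graded piece by the formulas in the statement: applying $\pi^*(-)\cdot 1_E^d$ after $\pi_*(-)\cdot t^d$ returns $f$ because $\pi^*\pi_* f$ differs from $f$ only along $E$ and the factor $1_E^d$ corrects precisely the order of vanishing, while the reverse composition is the identity on $R_1[I]^{\rm sat}$ since $\pi_* \pi^* = \mathrm{id}$ on $R_1$. I expect the main obstacle to be the careful bookkeeping in the order-of-vanishing argument, i.e. proving that restriction to $U$ followed by extension across the codimension-$\ge 2$ locus $C$ identifies $\Gamma(X_2,\mathcal{O}(\pi^*D+dE))$ with the saturated ideal power $(I^{-d}:J^\infty)_{[D]}$ — this requires knowing that $p_1^{-1}(C) \subseteq \widehat X_1$ is cut out exactly by $I$, that blowing it up commutes with the formation of relative spectra, and that the valuative description of $E$ matches the $I$-adic filtration; one should either cite~\cite{coxrings} for the behaviour of Cox sheaves under such modifications or verify the valuation statement directly using that $C$ lies in the smooth locus, so that the blow-up is the familiar Proj of the Rees algebra and $\mathrm{ord}_E$ is the $I$-adic order function.
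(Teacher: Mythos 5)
Your proposal is correct in substance and follows essentially the same route as the paper: Lemma~\ref{lem:reeslem} there supplies exactly the three ingredients you isolate (that $p_1^{-1}(C)$ is cut out by $I$ because $C$ lies in the smooth locus, that $\pi^*(\mathcal{I}_C^m)=\mathcal{I}_E^m$ by the standard blow-up fact, and that quasiaffineness of $\widehat{X}_1$ converts global sections of $\mathcal{I}_{\widehat{C}}^m$ into $I^m:J^\infty$), and your displayed identity $\Gamma(X_2,\mathcal{O}_{X_2}(\pi^*D+dE))=(I^{-d}:J^\infty)_{[D]}\,t^d$ is precisely what the paper verifies, with the saturation absorbed exactly as in your valuative remark (since $\mathrm{ord}_E(\pi^*h)=0$ for $h\in J\setminus I$). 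The one thing to repair is a sign slip in your prose: with the convention $R_1[I]^{\rm sat}=\bigoplus_d(I^{-d}:J^\infty)t^d$ it is the \emph{negative} $t$-degrees that carry the ideal powers, so it is for $d<0$ that a section of $\pi^*D+dE$ vanishes to order $-d$ along $E$ and pushes forward into $I^{-d}:J^\infty$, while for $d\ge 0$ the graded piece is just $R_1t^d$ --- your statements ``for $d\ge 0$ \dots lying in the ideal power $I^d$'' and ``$\pi^*D-|d|E$ has the same global sections as $\pi^*D$'' have the two cases interchanged, but since your displayed formula is already the correct one this is a bookkeeping error rather than a flaw of strategy.
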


\begin{lemma}
\label{lem:reeslem}
In the above situation, consider the 
characteristic spaces $p_i \colon \rq{X}_i \to X_i$ 
and let 
$\mathcal{I}_C$, $\mathcal{I}_{\rq{C}}$, $\mathcal{I}_E$,
$\mathcal{I}_{\rq{E}}$ be the ideal sheaves of $C$, 
$\rq{C} = p_1^{-1}(C)$, $E$, $\rq{E} = p_2^{-1}(E)$
on $X_1$, $\rq{X}_1$, $X_2$, $\rq{X}_2$ respectively.
Then, for any $m > 0$, we have
$$
p_1^*(\mathcal{I}_{C}^m) \ = \ \mathcal{I}_{\rq{C}}^m,
\qquad
p_2^*(\mathcal{I}_E^m) \ = \ \mathcal{I}_{\rq{E}}^m,
\qquad
\pi^*(\mathcal{I}_{C}^m) \ = \ \mathcal{I}_E^m.
$$
Moreover, with the vanishing ideals $I \subseteq R_1$ 
of the closure of $\rq{C}$ and $J \subseteq R_1$ of
$\Spec \, R_1 \setminus \rq{X}_1$, we have 
$\Gamma(\rq{X}_1,\mathcal{I}_{\rq{C}}^m) = 
I^m : J^\infty$
for any $m > 0$.
\end{lemma}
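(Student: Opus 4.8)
The plan is to deduce the three ideal-sheaf identities from the flatness of the characteristic spaces over the locally factorial locus together with the universal property of blowing up, and then to obtain the description of $\Gamma(\rq{X}_1,\mathcal{I}_{\rq{C}}^m)$ by a direct ``codimension two'' argument on the affine cone $\b{X}_1 = \Spec\, R_1$. Here, for a morphism $f$ and an ideal sheaf $\mathcal{A}$ downstairs, $f^*\mathcal{A}$ is to be read as the inverse image ideal sheaf $f^{-1}(\mathcal{A})\cdot\mathcal{O}$, which agrees with the module pull back wherever $f$ is flat.

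First I would use that, as $C$ lies in the smooth, hence locally factorial, locus of $X_1$, the characteristic space $p_1\colon\rq{X}_1\to X_1$ restricts over that locus to a torsor under $H_1 = \Spec\,\KK[K_1]$ and is therefore faithfully flat, with fibres isomorphic to the smooth group $H_1$, over a neighbourhood of $C$. Consequently the inverse image of the reduced subscheme $C$ is again reduced, so $p_1^*\mathcal{I}_C = \mathcal{I}_{\rq{C}}$; since the inverse image ideal of a power of an ideal sheaf is the power of its inverse image ideal, this yields $p_1^*(\mathcal{I}_C^m) = \mathcal{I}_{\rq{C}}^m$. The identity $p_2^*(\mathcal{I}_E^m) = \mathcal{I}_{\rq{E}}^m$ is obtained in the same way, and is in fact easier: $X_2 = \mathrm{Bl}_C(X_1)$ is smooth along $E$, so $\mathcal{I}_E$ is invertible and the only point left is reducedness of $p_2^{-1}(E)$, which follows from flatness of $p_2$ over a neighbourhood of $E$. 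Finally, $\pi^*(\mathcal{I}_C^m) = \mathcal{I}_E^m$ is just the defining universal property of the blow up: $\pi^{-1}(\mathcal{I}_C)\cdot\mathcal{O}_{X_2}$ is the invertible ideal $\mathcal{I}_E$, and forming the $m$-th power commutes with forming the inverse image ideal.

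For the last identity I would first translate from sheaves on $\rq{X}_1$ to ideals in $R_1$. Since $I$ is the vanishing ideal of the closure $\b{C}\subseteq\b{X}_1$ of $\rq{C}$ and $\rq{C} = \b{C}\cap\rq{X}_1$, restricting the ideal sheaf of $\b{C}$ to the open set $\rq{X}_1$ gives $\mathcal{I}_{\rq{C}} = \widetilde{I}\,|_{\rq{X}_1}$, hence $\mathcal{I}_{\rq{C}}^m = \widetilde{I^m}\,|_{\rq{X}_1}$. Now I invoke two standard properties of the characteristic space: the ring $R_1$ is normal, and the complement $\b{X}_1\setminus\rq{X}_1 = V(J)$ has codimension at least two in $\b{X}_1$ (see~\cite{coxrings}). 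Together they give $\Gamma(\rq{X}_1,\mathcal{O}_{\rq{X}_1}) = R_1$, so that a global section of the subsheaf $\mathcal{I}_{\rq{C}}^m\subseteq\mathcal{O}$ is exactly an element $x\in R_1$ whose germ lies in $I^m R_{1,\mathfrak{p}}$ for every prime $\mathfrak{p}$ with $J\not\subseteq\mathfrak{p}$. This last condition says that the colon ideal $I^m:_{R_1}x$ is contained in no such $\mathfrak{p}$, i.e.~$V(I^m:x)\subseteq V(J)$, i.e.~$\sqrt{J}\subseteq\sqrt{I^m:x}$; as $J$ is finitely generated, this is equivalent to $J^k\subseteq I^m:x$ for some $k$, that is, to $x\in I^m:J^\infty$. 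The reverse inclusion drops out of the same local computation, so $\Gamma(\rq{X}_1,\mathcal{I}_{\rq{C}}^m) = I^m:J^\infty$, as claimed.

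The genuinely non-formal ingredients are the two background facts used above: the flatness of $p_i$ over the locally factorial locus, which comes from the universal torsor description of the characteristic space, and $\codim_{\b{X}_1}V(J)\geq 2$, which comes from the general structure theory of characteristic spaces; the rest is bookkeeping with scheme structures and the dictionary between ideal sheaves and ideals. Accordingly, I expect the only point demanding care to be keeping track of the (non-)reducedness of the various preimages and of exactly where flatness permits replacing the module pull back by the inverse image ideal.
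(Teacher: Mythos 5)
Your proposal is correct and follows essentially the same route as the paper's (much terser) proof: the torsor/flatness argument over the smooth locus is exactly the paper's ``$p_1$ has no multiple fibers near $C$'', the third identity is the same appeal to the defining property of the blow up (the paper cites EGA~II, 8.1.7--8.1.8), and your local computation with $I^m:J^\infty$ is precisely what the paper compresses into ``follows from the fact that $\rq{X}_1$ is quasiaffine''. No substantive difference in method, only in the level of detail.
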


\begin{proof}
For the first equality, we use that $C$
is contained in the smooth locus of $X_1$. 
This implies that $p_1$ has no multiple fibers 
near $C$ and the claim follows.
The second equality is obtained by the same reasoning.
The third one is a standard fact on blowing up~\cite[Prop.~8.1.7 
and Cor.~8.1.8]{EGA2}.
The last statement follows from the fact that $\rq{X}_1$ 
is quasiaffine.
\end{proof}

\begin{proof}[Proof of Proposition~\ref{prop:reesalg}]
We only have to prove that the maps are 
well defined.
For the map from $R_2$ to $R_1[I]^{\rm sat}$
consider
$f \in (R_2)_{([\pi^* D],d)}$.
We have to show that for $d < 0$, the 
push forward $\pi_* f$ belongs to $I^{-d}:J^{\infty}$.
Note that $\pi_* f = \pi_* f'$ holds with 
$f' := f \cdot 1_E^{-d} \in  (R_2)_{[\pi^* D]}$.
Pushing $f'$ locally to $X_2$, then to $X_1$ 
and finally lifting it to $\rq{X}_1$, we see 
using Lemma~\ref{lem:reeslem}
that $\pi_* f'$ is a global section of
the $-d$-th power of 
the ideal sheaf of $p_1^{-1}(C) \subseteq \rq{X}_1$. 
This gives
$\pi_*f' \in I^{-d} : J^{\infty} \subseteq R_1$.

For the map from $R_1[I]^{\rm sat}$ to $R_2$, 
consider $f \cdot t^d \in (R_1[I]^{\rm sat})_{([D],d)}$, 
where $d < 0$.
We need that $1_E^{-d}$ divides $\pi^*f$ in $R_2$. 
By definition, there exist an $h \in J$ and a $k \ge 0$
such that $h \not\in I$ and $fh^k \in I^{-d}$.
Then we have $(\pi^*f) (\pi^*h)^k \in (\pi^* I)^{-d}$.
Using $\bangle{\pi^*I} = \bangle{1_E}$ and
the fact that $1_E$ is a $K_2$-prime not dividing any 
power of $\pi^*h$ we see that $1_E^{-d}$ divides $\pi^*f$.
\end{proof}

For the computation of the Cox ring $R_2$,
we work in the notation of Setting~\ref{set:ambmod};
in particular $X_1$ comes as a CEMDS $X_1 \subseteq Z_1$.
As before, $C \subseteq X_1$ is an irreducible subvariety
contained in the smooth locus of $X_1$
and $\rq{C} \subseteq \rq{X}_1$ denotes its inverse 
image with respect to $p_1 \colon \rq{X}_1 \to X_1$.
The idea is to stretch the given embedding 
$X_1 \subseteq Z_1$ by suitable generators of the 
vanishing ideal $I \subseteq R_1$ of $ \rq{C} \subseteq \b{X}_1$
and then perform an ambient modification.

\begin{algorithm}[BlowUpCEMDS]
\label{algo:latticeideal}
{\em Input: } a CEMDS $(P_1,\Sigma_1,G_1)$, 
a $K_1$-prime ideal $I = \<f_1,\ldots,f_l\> \subseteq R_1$
with pairwise non-associated $K_1$-primes $f_i\in R_1$
defining an irreducible subvariety $C \subseteq X_1$ inside the 
smooth locus and coprime positive integers $d_1,\ldots,d_l$
with $f_i \in I^{d_i}:J^\infty$.
\begin{itemize}
\item
Compute the stretched CEMDS $(P_1',\Sigma_1',G_1')$ 
by applying Algorithm~\ref{algo:stretchcemds} 
to $(P_1,\Sigma_1,G_1)$ and  $(f_1,\ldots,f_l)$.
\item
Define a multiplicity vector $v \in \ZZ^{r_1+l}$ 
by 
$v_i := 0$ for $1\leq i\leq r_1$
and
$v_{i} := d_{i-r_{1}}$ for $r_{1}+1 \leq i \leq r_{1}+l$.
\item
Determine the stellar subdivision $\Sigma_2 \to \Sigma_1'$ 
of the fan $\Sigma_1'$ along the ray through 
$P_1' \cdot v$.
Set $P_2 := [P_1',P_1'\cdot v]$.
\item
Compute $(P_2,\Sigma_2,G_2)$ 
by applying Algorithm~\ref{algo:modifycemds}
to $(P_1',\Sigma_1',G_1')$ and the pair $(P_2,\Sigma_2)$.
\item
Let $T^\nu$ be the product over all $T_i$
with $C\not\subseteq D_i$
where $D_i\subseteq X_1$ is the divisor corresponding
to $T_i$.
Test whether 
$\dim(I_2 + \<T_{r_2}\>) > \dim(I_2 + \<T_{r_2},T^\nu\>)$.
\item
Set $(P_2',\Sigma_2',G_2') := (P_2,\Sigma_2,G_2)$.
Eliminate all fake relations by 
applying Algorithm~\ref{algo:compresscemds}.
Call the output $(P_2,\Sigma_2,G_2)$. 
\end{itemize}
{\em Output: } $(P_2,\Sigma_2,G_2)$.
If the verification in the next to last step was
positive, then $(P_2,\Sigma_2,G_2)$ is a CEMDS
describing the blow up $X_2$ of $X_1$ along~$C$.
In particular then the $K_2$-graded algebra 
$R_2$ is the Cox ring of~$X_2$.
\end{algorithm}

\begin{proof}
Consider the $K_2$-graded ring 
$R_2 = \KK[T_1,\ldots, T_{r_2}]/I_2$ 
associated to the output $(P_2,\Sigma_2,G_2)$.
The first step is to show that $R_2$ is normal;
then $(P_2,\Sigma_2,G_2)$ is a CEMDS and $R_{2}$ is 
the Cox ring of the output variety $X_{2}$.
In a second step we show that $X_2$ equals
the blow up of $X_1$ along~$C$.

Consider the output $(P_2,\Sigma_2,G_2)$ of the fourth item,
i.e.~the situation before eliminating fake relations.
The variables $T_{r_1+1}, \ldots, T_{r_2-1}$ correspond to 
$f_1,\ldots,f_l$ and $T_{r_2}$ to the exceptional 
divisor.
Observe that we have a canonical $K_2$-graded homomorphism
$R_2 \to R_1[I]^{\rm sat}$ induced by 
$$ 
\KK[T_1,\ldots, T_{r_2}]
\ \to \ 
R_1[I]^{\rm sat},
\qquad
T_i 
\ \mapsto \ 
\begin{cases}
T_i, & 1 \le i \le r_1,
\\
f_{i-r_{1}}t^{-v_i}, & r_1 < i < r_2,
\\
t,        & i = r_2.
\end{cases}
$$
Indeed, 
because $C$ is contained in the smooth locus of
$X_1$, the cone generated by the last $l$ columns 
of $P_1'$ is regular and, because in addition $d_1, \ldots, d_l$ 
are coprime, the vector $P_1' \cdot v$ is primitive.
Thus, the ideal $I_2$ of $X_2$ is the saturation 
with respect to $T_{r_2}$ of 
$$
I_1 + \bangle{T_{i}T_{r_2}^{v_i}-f_{i-r_1}; \; r_1 < i < r_2}
\ \subseteq \ 
\KK[T_1,\ldots,T_{r_2}].
$$
Consequently, the above assignment induces a homomorphism 
$R_2 \to R_1[I]^{\rm sat}$.
This homomorphism induces an isomorphism of 
the $K_2$-graded localizations
$$ 
\left( R_2 \right)_{T_{r_2}}
\ = \ 
\bigoplus_{d \in \ZZ} R_1 T_{r_2}^d
\ \cong \ 
\bigoplus_{d \in \ZZ} R_1 t^d
\ = \ 
\left( R_1[I]^{\rm sat} \right)_{t}
$$
and hence is in particular injective. 
As the image $\phi(R_2)$ contains 
generators
$t=\phi(T_{r_2})$ and $\phi(T_{r_1+i}T_{r_2}^{v_i-1}) = f_it^{-1}$
for the Rees algebra $R_1[I]$,
we obtain
$$
R_1[I]
\ \ \subseteq \ \ 
\phi(R_2)
\ \ \subseteq \ \ 
R_1[I]^{\rm sat}.
$$

We show that $\phi(R_2)$ equals $R_1[I]^{\rm sat}$.
Otherwise, the algebras must be different in some degree,
i.e.~we can choose $n\in \ZZ_{\geq 1}$ minimal such that
there is 
$ft^{-n}\in (R_1[I]^{\rm sat})_{-n}\setminus \phi(R_2)_{-n}$.
The minimality implies 
$ft^{-n+1}\in \<t\>_{R_1[I]^{\rm sat}}\cap \phi(R_2)$
with $ft^{-n+1}\not\in \<t\>_{\phi(R_2)}$.
In particular,
$\<t\>_{\phi(R_2)}$
is properly contained in
$\<t\>_{R_1[I]^{\rm sat}}\cap \phi(R_2)$. 

Define $T^\nu$ as the product over all $T_i$
such that $C\not\subseteq D_i$
where $D_i\subseteq X_1$ is the divisor corresponding to $T_i$.
Note that $T^\nu \in J\subseteq R_1$.
In the localized algebras
$$
R_1[I]_{T^\nu}
\ =\ 
\phi(R_2)_{T^\nu}
\ =\  
\left(R_1[I]^{\rm sat}\right)_{T^\nu}
$$
the ideal
$\<t\>_{\phi(R_2)}$
and the $K_2$-prime ideal
$\<t\>_{R_1[I]^{\rm sat}}\cap \phi(R_2)$
coincide, i.e.~$t$ is $K_2$-prime in $\phi(R_2)_{T^\nu}$.
 By the dimension test we know that $t$ and $T^\nu$
are coprime in $\phi(R_2)$.
Consequently, $t$ is $K_2$-prime in $\phi(R_2)$.
Therefore, 
$\<t\>_{R_1[I]^{\rm sat}}\cap \phi(R_2)=
\<t\>_{\phi(R_2)}$
in $\phi(R_2)$, a contradiction.
We conclude $R_2 \cong R_1[I]^{\rm sat}$.

By Proposition~\ref{prop:reesalg}, 
$R_1[I]^{\rm sat}\cong R_2$ is the Cox ring of 
the blow up $X_2'$ of $X_1$ at $C$.
In particular, 
$R_2\cong R_2'$ is normal.
We may now apply 
Algorithm~\ref{algo:compresscemds}.
Note that there is no need to use the \texttt{verify} option
as the variables $T_1,\ldots,T_{r_2}\in R_2$ are $K_2$-prime
and the generators surviving the elimination process are 
$K_2$-prime as well. As for any Cox ring,
the $K_2$-grading is almost-free.

We show that $X_2 \cong X_2'$ holds. 
Let $\lambda \subseteq \Mov(R_2)$ be the chamber 
representing $X_1$. 
Then $\lambda$ is of codimension one in 
$\QQ \otimes K_2$ and lies on the boundary 
of $\Mov(R_2)$. 
Since there are the contraction morphisms
$X_2 \to X_1$ and $X_2' \to X_1$, the chambers
$\lambda_2, \lambda_2'$ corresponding to 
$X_2$, $X_2'$ both have $\lambda$ as a face.
We conclude $\lambda_2 = \lambda_2'$ and thus
$X_2 \cong X_2'$.
\end{proof}

An important special case of  Algorithm~\ref{algo:latticeideal}
is blowing up a smooth point.
The point $x_1 \in X_1$ can be given in Cox coordinates, 
i.e.~as a point $z \in \rq{X}_1 \subseteq \KK^{r_1}$ 
with $x_1 = p_1(z)$.
The steps are as follows.

\begin{remark}[Blow up of a point]
Let $X_1 =(P_1,\Sigma_1,G_1)$ be a CEMDS
and $x \in X_1$ a smooth point given in 
Cox coordinates $z \in \KK^{r_1}$.
Let $i_1,\ldots, i_k$ be the indices 
with $z_{i_j} \ne 0$ and $\nu_1, \ldots, \nu_s \in \ZZ^r$ 
a lattice basis for
${\rm im}(P_1^*)\cap \lin(e_{i_1},\ldots,e_{i_k})$.
The {\em associated ideal\/}
to $P_1$ and $z$ is
\begin{eqnarray*}
I(P_1,z) 
& := &
\bangle{
z^{-\nu_1^+}T^{\nu_1^+} - z^{-\nu_1^-}T^{\nu_1^-}, 
\ldots, 
z^{-\nu_s^+}T^{\nu_s^+} - z^{-\nu_s^-}T^{\nu_s^-}
} : (T_1 \cdots T_r)^\infty
\\
&\hphantom{ := }& 
+\ \bangle{T_{j};\ z_j\,=\,0}
\ \ \subseteq\ \ \KK[T_1, \ldots, T_r],
\end{eqnarray*}
where $\nu_i = \nu_i^+ + \nu_i^-$ is the unique 
representation as a sum of a nonnegative
and a nonpositive vector having disjoint supports.
The ideal is generated by variables and binomials;
for general $z$ it is a lattice ideal see~\cite{MiSt}.
Then $I(P_1,z)\subseteq R_1$ is the vanishing ideal
of the orbit closure $\b{H_1\cdot z}$ in $\b X_1$.
Let $(f_1,\ldots,f_l)$ be a list of pairwise 
non-associated $K_1$-prime generators 
for $I(P_1,z_1)\subseteq R_1$
and $d_1,\ldots,d_l\in \ZZ_{\geq 1}$.
Then the Cox ring of the blow up of $X_1$ in $x$ 
can be computed with Algorithm~\ref{algo:latticeideal}
with input $(f_1,\ldots,f_l)$ and $(d_1,\ldots,d_l)$.
\end{remark}

The following algorithm produces systematically
generators and their multiplicities $d_i\in \ZZ_{\geq 1}$ 
of the Cox ring of a blow up of a Mori dream space
in the sense that adds step by step generator sets 
for the positive Rees algebra components.

\begin{algorithm}[BlowUpCEMDS2]
\label{algo:latticeideal2}
{\em Input: } a CEMDS $(P_1,\Sigma_1,G_1)$, 
a $K_1$-prime ideal $I$
defining an irreducible subvariety $C \subseteq X_1$ inside the 
smooth locus.
\begin{itemize}
\item 
Let $F$ and $D$ be empty lists.
\item 
For each $k=1,2,\ldots\in \ZZ_{\geq 1}$ do
\begin{itemize}
\item
compute a set $G_k$ of generators
for $A_k := I^k : J^\infty\subseteq R_1$.
Let $f_{k1},\ldots,f_{kl_i}$ be 
a maximal subset of pairwise non-associated 
elements of $G_k$ with 
\[
\qquad\qquad
f_{kj}
\ \not\in\ 
A_1A_{k-1} + \ldots + A_{\lfloor \frac{k}{2} \rfloor }A_{\lceil \frac{k}{2}\rceil}
\quad \text{ if }\ k>1.
\]
\item
Determine integers $d_{k1},\ldots,d_{kl_i}\in \ZZ_{\geq k}$
such that
$f_{kj}\in A_{d_{ki}}\setminus A_{d_{ki}+1}$.
\item 
Add the elements of $f_{k1},\ldots,f_{kl_i}$ 
to $F$ that are not associated to any other element of $F$.
Add the respective integers among $d_{k1},\ldots,d_{kl_i}$ to~$D$.
\item
Run Algorithm~\ref{algo:latticeideal}
with input $(P_1,\Sigma_1,G_1)$, $F$ and~$D$.
\item 
If Algorithm~\ref{algo:latticeideal} terminated
with $(P_2,\Sigma_2,G_2)$ and positive verification,
return $(P_2,\Sigma_2,G_2)$.
\end{itemize}
\end{itemize}
{\em Output (if provided): } 
the algorithm terminates if and only if $X_2$ is 
a Mori dream space.
In this case, the CEMDS $(P_2,\Sigma_2,G_2)$ describes the blow 
up $X_2$ of $X_1$ along $C$.
In particular then the $K_2$-graded algebra 
$R_2$ is the Cox ring of~$X_2$.
\end{algorithm}

\begin{proof}
Note that each $f_{ki}$ is a $K_1$-prime element.
Otherwise, $f_{ki}=f_1f_2$ 
with $K_1$-homogeneous elements $f_i\in R_1$.
As $I$ is $K_1$-prime, 
$f_1$ or $f_2$ lies in $A_{k'}$ with 
$k'<k$, i.e.~$f_{ki}\in A_{k'}$.
This contradicts the choice of~$f_{ki}$.

By Proposition~\ref{prop:reesalg},
the Cox ring $R_2$ of the blow up is isomorphic to the 
saturated Rees algebra $R_1[I]^{\rm sat}$.
After the $k$-th step, 
$(F,T_1,\ldots,T_{r_1},t)$
are generators for a subalgebra $B_k\subseteq R_1[I]^{\rm sat}$
such that
\[
\KK\left[
\{t\}
\,\cup\,
R_1
\,\cup\, 
A_1t^{-1}
\,\cup\,
\ldots
\,\cup\,
A_kt^{-k}
\right]
\ \subseteq\ 
B_k
\ \subseteq\ 
\bigoplus_{k\in \ZZ}
A_{k}t^{-k}
\ =\ 
R_1[I]^{\rm sat}.
\]
If the algorithm stops,
 by the correctness of Algorithm~\ref{algo:latticeideal},
the output then is a CEMDS
describing the blow $X_2$ with Cox ring $R_2$.
Vice versa, 
if $X_2$ has finitely generated Cox ring,
there is $k_0 \geq 1$ with
$R_1[I]^{\rm sat} = B_{k_0}$.
Then Algorithm~\ref{algo:latticeideal} is called with
$K_1$-prime non-associated generators
for $\Cox(X_2)\cong R_1[I]^{\rm sat}$
and thus terminates with positive verification.
\end{proof}

\begin{example}
\label{ex:wpp345}
We compute the Cox ring of the blow up
of the weighted projective plane $X_1 := \PP_{3,4,5}$ 
at the general point
with Cox coordinates $z_1 := (1,1,1)\in \KK^3$
by the steps of Algorithm~\ref{algo:latticeideal2}.
The lattice ideal of $z_1$ with respect to $P_1$ is
\begin{center}
\begin{minipage}{5cm}
  \begin{align*}
  I(P_1,z_1)
  \ &=\
  \<
   T_{2}^{2}-T_{1}T_{3},\
    T_{1}^{2}T_{2}-T_{3}^{2},\
    T_{1}^{3}-T_{2}T_{3}\>,
    \end{align*}
\end{minipage}
\qquad
\begin{minipage}{3cm}
\[
P_1 \ :=\
\left[
\mbox{\tiny $
\begin{array}{rrr}
1 & -2 & 1 \\
-2 & -1 & 2
\end{array}
$}
\right].
\]
\end{minipage}
\end{center}
An application of Algorithm~\ref{algo:latticeideal}
with the three generators $(f_1,f_2,f_3)$ of $I := I(P_1,z_1)$
and all $d_i := 1$ fails.
However, adding the additional generator
\begin{eqnarray*}
f_4
&:=&
T_{1}^{5}-{3}T_{1}^{2}T_{2}T_{3}+T_{1}T_{2}^{3}+T_{3}^{3}
\ \ \in\ \ I^2:J^\infty
\end{eqnarray*}
with $d_4 := 2$ to the input, Algorithm~\ref{algo:latticeideal}
returns the $\Cl(X_2)=\ZZ^2$-graded Cox ring $R_2=\Cox(X_2)$ of the
blow up $X_2$ of $X_1$ in $[z_1]$.
All verifications are positive.
The ring is given as $R_2 = \KT{8}/I_2$ with generators for $I_2$
and the degree matrix being
\begin{center}
\begin{minipage}{3cm}
  \begin{align*}
  \mbox{\tiny $
\begin{array}{ll}
    -T_{1}T_{7}+T_{4}T_{5}+T_{6}^{2},\,\ \
    &T_{1}T_{4}^{2}-T_{2}T_{7}+T_{5}T_{6},\,\\
    -T_{1}T_{4}T_{6}-T_{3}T_{7}+T_{5}^{2},\,
    &-T_{1}T_{5}+T_{2}T_{6}+T_{3}T_{4},\,\\
     T_{2}^{2}-T_{1}T_{3}-T_{4}T_{8},\,
    &T_{1}^{3}-T_{2}T_{3}-T_{6}T_{8},\,\\
    T_{1}^{2}T_{4}-T_{2}T_{5}+T_{3}T_{6},\,
    &T_{1}^{2}T_{6}+T_{1}T_{2}T_{4}-T_{3}T_{5}-T_{7}T_{8},\,\\
    T_{1}^{2}T_{2}-T_{3}^{2}-T_{5}T_{8}
 \end{array}
 $}
    \end{align*}
\end{minipage}
\
\begin{minipage}{5cm}
    \[
    \left[
    \mbox{\tiny $
    \begin{array}{rrrrrrrr}
    3 & 4 & 5 & -1 & 1 & 0 & -3 & 9 \\
    0 & 0 & 0 & 1 & 1 & 1 & 2 & -1
    \end{array}
    $}
    \right].
    \]
\end{minipage}
\end{center}
\end{example}

In Algorithm~\ref{algo:latticeideal},
the saturation computation may become infeasible.
In this case, the following variant can be used
to obtain at least finite generation.

\begin{algorithm}[Finite generation]
\label{algo:fg}
{\em Input: } 
a CEMDS $(P_1,\Sigma_1,G_1)$, 
a $K_1$-prime ideal $I = \<f_1,\ldots,f_l\> \subseteq R_1$
with pairwise non-associated $K_1$-primes $f_i$
defining an irreducible subvariety $C \subseteq X_1$ inside the 
smooth locus and coprime positive integers $d_1,\ldots,d_l$
with $f_i \in I^{d_i}:J^\infty$.
\begin{itemize}
\item
Compute the stretched CEMDS $(P_1',\Sigma_1',G_1')$ 
by applying Algorithm~\ref{algo:stretchcemds} 
to $(P_1,\Sigma_1,G_1)$ and  $(f_1,\ldots,f_l)$.
\item
Define a multiplicity vector $v \in \ZZ^{r_1+l}$ 
by $v_i := 0$ if $1\le i \le r_1$ and 
$v_i := d_{i-r_1}$ for $r_1+1 \leq i \leq r_1+l$.
\item
Determine the stellar subdivision $\Sigma_2 \to \Sigma_1'$ 
of the fan $\Sigma_1'$ along the ray through 
$P_1' \cdot v$.
Set $P_2 := [P_1',P_1'\cdot v]$.
\item
Compute $G_2' := (h_1,\ldots,h_s)$,
where $h_i = p_2^\sharp(p_1')_\sharp(g_i)$
and $G_1' = (g_1,\ldots,g_s)$.
\item
Choose a system of generators $G_2$ of
an ideal $I_2\subseteq \KT{r_2}$ with
$\bangle{G_2'}:(T_1\cdots T_{r_2})^\infty\supseteq I_2 \supseteq \bangle{G_2'}$.
\item
Check if $\dim(I_2) - \dim(I_2 + \bangle{T_i,T_j})\geq 2$ 
for all $i\not= j$.
\item
Check if $T_{r_2}$
is prime in $\KK[T_j^{\pm 1}; \; j \ne r_2][T_{r_2}]/I_2$.
\end{itemize}
{\em Output: } 
$(P_2,\Sigma_2,G_2)$.
The ES $(P_2,\Sigma_2,G_2)$ describes the blow 
up $X_2$ of $X_1$ along $C$.
If all verifications in the last steps were 
positive, the Cox ring $\Cox(X_2)$ is finitely generated and 
is given by the $H_2$-equivariant normalization of 
$\KT{r_2} / I_2:(T_1\cdots T_{r_2})^\infty$.
\end{algorithm}

\begin{proof}
By the last verification, the exceptional divisor $D_{r_2}\subseteq X_2$
inherits a local defining equation from the toric ambient variety $Z_2$.
 Thus, the ambient modification is
  neat in the sense of~\cite[Def.~5.4]{Ha2}.
  By~\cite[Prop.~5.5]{Ha2}, $X_2\subseteq Z_2$ is a neat embedding.
  In turn, the dimension checks enable us to use~\cite[Cor.~2.7]{Ha2},
  which completes the proof.
\end{proof}

\section{Smooth rational surfaces}
\label{sec:smoothrat}

We consider smooth rational surfaces $X$ of Picard 
number $\varrho(X) \le 6$. 
Using Algorithm~\ref{algo:latticeideal}, we show 
that they are all Mori dream surfaces and we 
compute their Cox rings.
Recall that every smooth rational surface $X$ of 
Picard number $\varrho(X) = k$ can be obtained by 
blowing up the projective plane $\PP_2$ at $k-1$ 
points or a Hirzebruch surface $\Fa$ at $k-2$ 
points, where, in both cases, some points may be 
infinitely near, i.e.~one also performs iterated blow ups.
Whereas blow ups of the projective plane $\PP_2$ 
can be done in a purely computational manner, 
the treatment of the (infinitely many) Hirzebruch 
surfaces $\Fa$ requires also theoretical work
due to their parameter $a \in \ZZ_{\ge 1}$.

In the following statement,
we concentrate on those surfaces $X$ that do 
not admit a (non-trivial) $\KK^*$-action; for 
the full list of Cox rings in the case 
$\varrho(X) \le 5$, we refer to~\cite{Ke}.
Note that the rational $\KK^*$-surfaces as well
as the toric ones admit a combinatorial description 
which opens a direct approach to their Cox rings,
see~\cite{Ha3, Hug}.
We denote the iterated blow up
of a point $x$ by sequences of $g_i$ and $s_i$ indicating general
and special points on the $i$-th exceptional divisor over $x$. For
example, $s_3g_2g_1$ indicates a fourfold blow up of a point
with a general point $g_1$ on the first exceptional divisor,
a general point $g_2$ on the second and a special point $s_3$ on
the third. The special points will be precisely defined in
each case.

\begin{theorem}
\label{thm:pic6}
 Let $X$ be a smooth rational surface with Picard number 
 $\varrho(X) \leq 6$.
  Then $X$ is a Mori dream space. 
  If $\varrho(X) \leq 5$, then either $X$ admits 
  a $\KK^*$-action or is isomorphic to $\b M_{0,5}$, 
  the blow up of $\PP_2$ in four general points. 
  If $\varrho(X) = 6$, then $X$ admits a $\KK^*$-action
  or its Cox ring is isomorphic to exactly one of the following,
  where $a\in \ZZ_{\geq 3}$.
 \begin{enumerate}
\item
The $\ZZ^6$-graded ring $\KT{10}/I$, where 
generators for $I$ and the degree matrix are given as
\begin{center}
\begin{minipage}{7cm}
\tiny
\begin{gather*}
\begin{array}{l}
T_{3}^{2}T_{4}-T_{1}T_{2}-T_{6}T_{7}T_{8}T_{10},\ 
T_{1}T_{2}^{2}T_{3}T_{4}T_{5}-T_{6}^{2}T_{7}-T_{9}T_{10}
\end{array}
\end{gather*}
\end{minipage}
\\[1ex]
\begin{minipage}{6cm}
\mbox{\tiny$
  \left[
    \mbox{\tiny $
    \begin{array}{rrrrrrrrrr}
    1 & 0 & 0 & 1 & 0 & 0 & 2 & 0 & 3 & -1 \\
    0 & 1 & 0 & 1 & 0 & 0 & 3 & 0 & 5 & -2 \\
    0 & 0 & 1 & -2 & 0 & 0 & -1 & 0 & -2 & 1 \\
    0 & 0 & 0 & 0 & 1 & 0 & 1 & 0 & 2 & -1 \\
    0 & 0 & 0 & 0 & 0 & 1 & -2 & 0 & -1 & 1 \\
    0 & 0 & 0 & 0 & 0 & 0 & 0 & 1 & 1 & -1
    \end{array}
    $}
    \right]  
$}
\end{minipage}
\end{center}
In this case, $X$ is a $5$-fold blow up of  $\PP_2$ in a point 
of type $g_4s_3g_2g_2$, where $s_3$ is the intersection
point of the 3rd and the 2nd exceptional divisor. 
\begin{center}
\begin{minipage}{1cm}
\tiny
\begin{tikzpicture}[scale=.22]
\coordinate (p1) at (-2,0);
\coordinate (p2) at (2,0);
\coordinate (p3) at (0,3);

\fill[color=black!25] (p1)--(p2)--(p3)--cycle;
\draw (p1)--(p2)--(p3)--cycle;
 
\fill (p1) circle (.20cm) node[anchor=east]{$5$};
\end{tikzpicture}
\end{minipage}
\end{center}

\item
 The $\ZZ^6$-graded ring $\KT{10}/I$, where 
generators for $I$ and the degree matrix are given as
\begin{center}
\begin{minipage}{7cm}
\tiny
\begin{gather*}
\begin{array}{l}
T_{3}T_{5}T_{8}-T_{2}T_{6}-T_{9}T_{10},\ 
T_{1}T_{5}+T_{7}T_{8}-T_{2}T_{6}^{2}T_{4}T_{10}
\end{array}
\end{gather*}
\end{minipage}
\\[1ex]
\begin{minipage}{6cm}
\mbox{\tiny$
  \left[
    \mbox{\tiny $
    \begin{array}{rrrrrrrrrr}
    1 & 0 & 0 & 0 & 0 & 0 & 1 & 0 & -1 & 1 \\
    0 & 1 & 0 & 0 & 0 & 0 & -1 & 1 & 2 & -1 \\
    0 & 0 & 1 & 0 & 0 & 0 & 1 & -1 & 0 & 0 \\
    0 & 0 & 0 & 1 & 0 & 0 & 0 & 0 & 1 & -1 \\
    0 & 0 & 0 & 0 & 1 & 0 & 2 & -1 & -1 & 1 \\
    0 & 0 & 0 & 0 & 0 & 1 & -1 & 1 & 3 & -2
    \end{array}
    $}
    \right]  
$}
\end{minipage}
\end{center}
In this case, $X$ is a $3$-fold blow up of $\PP_2$
in $[1,0,0]$ of type $g_1g_1$, 
and single blow ups in
$[0,1,0]$ and $[1,1,0]$.
\begin{center}
\begin{minipage}{1cm}
\tiny
\begin{tikzpicture}[scale=.22]
\coordinate (p1) at (-2,0);
\coordinate (p2) at (2,0);
\coordinate (p3) at (0,3);

\fill[color=black!25] (p1)--(p2)--(p3)--cycle;
\draw (p1)--(p2)--(p3)--cycle;
 
\fill (p1) circle (.20cm) node[anchor=east]{$3$};
\fill (p2) circle (.20cm);
\fill ($(p1)!.5!(p2)$) circle (.20cm);
\end{tikzpicture}
\end{minipage}
\end{center}
\item
The $\ZZ^6$-graded ring $\KT{11}/I$, where 
generators for $I$ and the degree matrix are given as
\begin{center}
\begin{minipage}{7cm}
\tiny
\begin{gather*}
\begin{array}{ll}
T_{3}^{2}T_{4}T_{5}^{2}T_{8}-T_{2}T_{7}-T_{11}T_{10},\, &
    T_{2}^{2}T_{4}T_{6}^{2}T_{11}-T_{5}T_{9}+T_{8}T_{10},\,\\
    T_{1}T_{5}+T_{7}T_{8}-T_{2}T_{4}T_{6}^{2}T_{11}^{2},\,&
    T_{3}^{2}T_{4}T_{5}T_{8}^{2}+T_{1}T_{2}-T_{9}T_{11},\,\\
    T_{3}^{2}T_{4}^{2}T_{5}T_{8}T_{2}T_{6}^{2}T_{11}-T_{7}T_{9}-T_{1}T_{10}
\end{array}
\end{gather*}
\end{minipage}
\\[1ex]
\begin{minipage}{6cm}
\mbox{\tiny$
  \left[
    \mbox{\tiny $
    \begin{array}{rrrrrrrrrrr}
    1 & 0 & 0 & 0 & 1 & 0 & 2 & 0 & 0 & 1 & 1 \\
    0 & 1 & 0 & 0 & 1 & 0 & 1 & 0 & 1 & 2 & 0 \\
    0 & 0 & 1 & 0 & 0 & 0 & 1 & -1 & 0 & 1 & 0 \\
    0 & 0 & 0 & 1 & 1 & 0 & 2 & -1 & 0 & 2 & 0 \\
    0 & 0 & 0 & 0 & 2 & 0 & 3 & -1 & -1 & 2 & 1 \\
    0 & 0 & 0 & 0 & 0 & 1 & 0 & 0 & 1 & 1 & -1
    \end{array}
    $}
    \right]  
$}
\end{minipage}
\end{center}
In this case, $X$ is the $3$-fold blow up of $\PP_2$ in 
$[1,0,0]$ of type $g_2g_1$ 
and single blow ups in 
$[0,1,0]$ and $[1,1,0]$.
\begin{center}
\begin{minipage}{1cm}
\tiny
\begin{tikzpicture}[scale=.22]
\coordinate (p1) at (-2,0);
\coordinate (p2) at (2,0);
\coordinate (p3) at (0,3);

\fill[color=black!25] (p1)--(p2)--(p3)--cycle;
\draw (p1)--(p2)--(p3)--cycle;
 
\fill (p1) circle (.20cm) node[anchor=east]{$3$};
\fill (p2) circle (.20cm);
\fill ($(p1)!.5!(p2)$) circle (.20cm);
\end{tikzpicture}
\end{minipage}
\end{center}
\item
The $\ZZ^6$-graded ring $\KT{10}/I$, where 
generators for $I$ and the degree matrix are given as
\begin{center}
\begin{minipage}{7cm}
\tiny
\begin{gather*}
\begin{array}{l}
T_{1}T_{5}+T_{7}T_{8}-T_{2}T_{4}T_{6}T_{10},\ 
    T_{3}T_{5}T_{7}T_{8}^{2}-T_{2}^{2}T_{4}-T_{9}T_{10}
\end{array}
\end{gather*}
\end{minipage}
\\[1ex]
\begin{minipage}{6cm}
\mbox{\tiny$
  \left[
    \mbox{\tiny $
    \begin{array}{rrrrrrrrrr}
    1 & 0 & 0 & 0 & 0 & 0 & 2 & -1 & -1 & 1 \\
    0 & 1 & 0 & 0 & 0 & 0 & -2 & 2 & 3 & -1 \\
    0 & 0 & 1 & 0 & 0 & 0 & 1 & -1 & 0 & 0 \\
    0 & 0 & 0 & 1 & 0 & 0 & -1 & 1 & 2 & -1 \\
    0 & 0 & 0 & 0 & 1 & 0 & 3 & -2 & -1 & 1 \\
    0 & 0 & 0 & 0 & 0 & 1 & 0 & 0 & 1 & -1
    \end{array}
    $}
    \right]  
$}
\end{minipage}
\end{center}
In this case, $X$ is the $3$-fold blow up of $\PP_2$ in 
$[1,0,0]$ of type $g_1s_1$ with the intersection point $s_1$ 
 of the 1st exceptional divisor 
 and the transform of $V(T_3)\subseteq \PP_2$
and single blow ups in 
$[0,1,0]$ and $[1,1,0]$.
\begin{center}
\begin{minipage}{1cm}
\tiny
\begin{tikzpicture}[scale=.22]
\coordinate (p1) at (-2,0);
\coordinate (p2) at (2,0);
\coordinate (p3) at (0,3);

\fill[color=black!25] (p1)--(p2)--(p3)--cycle;
\draw (p1)--(p2)--(p3)--cycle;
 
\fill (p1) circle (.20cm) node[anchor=east]{$3$};
\fill (p2) circle (.20cm);
\fill ($(p1)!.5!(p2)$) circle (.20cm);
\end{tikzpicture}
\end{minipage}
\end{center}

\item
The $\ZZ^6$-graded ring $\KT{13}/I$, where 
generators for $I$ and the degree matrix are given as
\begin{center}
\begin{minipage}{8cm}
\tiny
\begin{gather*}
\begin{array}{ll}
    T_{1}T_{11}-T_{4}T_{3}T_{9}-T_{8}T_{12},\, & 
    T_{1}T_{7}-T_{2}T_{8}+T_{3}T_{9}T_{13},\,\\
    T_{2}T_{6}+T_{7}T_{10}-T_{3}T_{5}T_{13},\,&
    T_{1}T_{6}+T_{8}T_{10}-T_{3}T_{4}T_{13},\,\\
   T_{2}T_{11}-\lambda T_{5}T_{3}T_{9}-T_{7}T_{12},\,&
   (\lambda -1)T_{1}T_{5}-T_{10}T_{9}-T_{12}T_{13},\,\\
   (\lambda -1)T_{5}T_{8}+T_{6}T_{9}-T_{11}T_{13},\,&
   T_{10}T_{11}-(\lambda -1)T_{4}T_{3}T_{5}+T_{6}T_{12},\\
   (\lambda -1)T_{4}T_{7}+\lambda T_{6}T_{9}-T_{11}T_{13},\,&
   (\lambda -1)T_{2}T_{4}-\lambda T_{10}T_{9}-T_{12}T_{13}.
\end{array}
\end{gather*}
\end{minipage}
\\[1ex]
\begin{minipage}{6cm}
\mbox{\tiny$
  \left[
    \mbox{\tiny $
       \begin{array}{rrrrrrrrrrrrr}
    1 & 0 & 0 & 0 & -1 & 0 & 0 & 1 & 0 & 0 & -1 & -1 & 1 \\
    0 & 1 & 0 & 0 & 1 & 0 & 0 & -1 & 0 & 1 & 0 & 1 & 0 \\
    0 & 0 & 1 & 0 & 0 & 0 & 0 & 0 & 0 & 0 & 1 & 1 & -1 \\
    0 & 0 & 0 & 1 & 1 & 0 & 0 & 0 & 1 & 0 & 2 & 2 & -1 \\
    0 & 0 & 0 & 0 & 0 & 1 & 0 & 0 & -1 & 1 & -1 & -1 & 1 \\
    0 & 0 & 0 & 0 & 0 & 0 & 1 & 1 & 1 & -1 & 1 & 0 & 0
    \end{array}
    $}
    \right]  
$}
\end{minipage}
\end{center}
In this case, $X$ is the the blow up of $\PP_2$ in 
$[1,0,0]$, $[0,1,0]$, $[0,0,1]$, $[1,1,1]$ and $[1,\lambda,0]$
where $\lambda \in \KK^*\setminus\{1\}$.
\begin{center}
\begin{minipage}{1cm}
\tiny
\begin{tikzpicture}[scale=.22]
\coordinate (p1) at (-2,0);
\coordinate (p2) at (2,0);
\coordinate (p3) at (0,3);
\coordinate (gen) at (0,1.25);

\fill[color=black!25] (p1)--(p2)--(p3)--cycle;
\draw (p1)--(p2)--(p3)--cycle;
 
\fill (p1) circle (.20cm);
\fill (p2) circle (.20cm);
\fill (p3) circle (.20cm);
\fill (gen) circle (.20cm);
\fill ($(p1)!.72!(p2)$) circle (.20cm) node[anchor=north]{$\lambda$};
\end{tikzpicture}
\end{minipage}
\end{center}

\item
The $\ZZ^6$-graded ring $\KT{16}/I$, where 
generators for $I$ and the degree matrix are given as
\begin{center}
\begin{minipage}{9cm}
\tiny
\setlength\arraycolsep{2pt}
\begin{gather*}
\begin{array}{ll}
    T_{6}T_{12}+\lambda T_{7}T_{14}-T_{8}T_{13},\, &
    T_{5}T_{12}-\mu T_{7}T_{15}-T_{9}T_{13},\,\\
    T_{4}T_{13}-\lambda T_{5}T_{14}-\mu T_{6}T_{15},\, &
    T_{4}T_{12}-\mu T_{8}T_{15}-\lambda T_{9}T_{14},\,\\
    T_{3}T_{11}+T_{7}T_{14}-T_{8}T_{13},\, &
    T_{1}T_{13}-T_{2}T_{14}-T_{3}T_{15},\,\\
    T_{1}T_{11}-T_{8}T_{15}-T_{9}T_{14},\, &
    T_{2}T_{11}-T_{7}T_{15}-T_{9}T_{13},\,\\
    (\lambda -\mu )T_{3}T_{5}+\mu T_{7}T_{10}-T_{13}T_{16},\, &
    (-\lambda +{1})T_{5}T_{14}+(-\mu +{1})T_{6}T_{15}+T_{10}T_{11},\,\\
     (\lambda -{1})T_{5}T_{8}+(-\mu +{1})T_{6}T_{9}-T_{11}T_{16},\, &
    (\lambda -{1})T_{4}T_{7}+(\lambda -\mu )T_{6}T_{9}-T_{11}T_{16},\,\\
    (\mu -{1})T_{3}T_{4}-\mu T_{8}T_{10}+T_{14}T_{16},\, &
    (-\lambda +{1})T_{2}T_{14}+(-\mu +{1})T_{3}T_{15}+T_{10}T_{12},\,\\
    (\lambda \mu -\mu )T_{2}T_{8}+(-\lambda \mu +\lambda )T_{3}T_{9}-T_{12}T_{16}, \,&
 (\lambda -\mu )T_{2}T_{6}+\lambda T_{7}T_{10}-T_{13}T_{16},\,\\
    (\lambda -{1})T_{2}T_{4}-\lambda T_{9}T_{10}-T_{15}T_{16},\, &
    (\lambda \mu -\mu )T_{1}T_{7}+(\lambda -\mu )T_{3}T_{9}-T_{12}T_{16},\,\\
    (\mu -{1})T_{1}T_{6}-T_{8}T_{10}+T_{14}T_{16},\, &
    (\lambda -{1})T_{1}T_{5}-T_{9}T_{10}-T_{15}T_{16}
\end{array}
\end{gather*}
\setlength\arraycolsep{5pt}
\end{minipage}
\\[1ex]
\begin{minipage}{9cm}
\mbox{\tiny$
  \left[
    \mbox{\tiny $
    \begin{array}{rrrrrrrrrrrrrrrr}
    1 & 0 & 0 & 0 & -1 & -1 & 0 & 1 & 1 & -1 & 0 & 1 & -1 & 0 & 0 & 0 \\
    0 & 1 & 0 & 0 & 1 & 0 & 0 & -1 & 0 & 1 & 0 & 0 & 1 & 0 & 1 & 0 \\
    0 & 0 & 1 & 0 & 0 & 1 & 0 & 0 & -1 & 1 & 0 & 0 & 1 & 1 & 0 & 0 \\
    0 & 0 & 0 & 1 & 1 & 1 & 0 & 0 & 0 & 1 & 0 & -1 & 0 & 0 & 0 & 1 \\
    0 & 0 & 0 & 0 & 0 & 0 & 1 & 1 & 1 & -1 & 0 & 0 & -1 & -1 & -1 & 1 \\
    0 & 0 & 0 & 0 & 0 & 0 & 0 & 0 & 0 & 0 & 1 & 1 & 1 & 1 & 1 & -1
    \end{array}
    $}
    \right]  
$}
\end{minipage}
\end{center}
In this case, $X$ is the blow up of $\PP_2$ in 
$[1,0,0]$, $[0,1,0]$, $[0,0,1]$, $[1,1,1]$ and $[1,\lambda,\mu]$
where $\lambda\ne \mu \in \KK^*\setminus\{1\}$.
\begin{center}
\begin{minipage}{.7cm}
\tiny
\begin{tikzpicture}[scale=.22]
\coordinate (p1) at (-2,0);
\coordinate (p2) at (2,0);
\coordinate (p3) at (0,3);
\coordinate (gen) at (.2,1.25);
\coordinate (gen2) at (-.65,1.15);

\fill[color=black!25] (p1)--(p2)--(p3)--cycle;
\draw (p1)--(p2)--(p3)--cycle;
 
\fill (p1) circle (.20cm); 
\fill (p2) circle (.20cm);
\fill (p3) circle (.20cm);
\fill (gen) circle (.20cm); 
\fill (gen2) circle (.20cm) node[anchor=east]{$\mu,\lambda\ $};
\end{tikzpicture}
\end{minipage}
\end{center}

\item
The $\ZZ^6$-graded ring $\KT{11}/I$, where 
generators for $I$ and the degree matrix are given as
\begin{center}
\begin{minipage}{7cm}
\tiny
\begin{gather*}
\begin{array}{ll}
    T_{6}T_{2}T_{4}+T_{5}T_{9}-T_{8}T_{10},\, &
    T_{3}T_{4}T_{8}-T_{1}T_{6}-T_{9}T_{11},\,\\
    T_{3}T_{4}T_{5}+T_{6}T_{7}-T_{11}T_{10},\, &
    T_{1}T_{5}+T_{7}T_{8}-T_{2}T_{4}T_{11},\,\\
    T_{3}T_{4}^{2}T_{2}-T_{7}T_{9}-T_{1}T_{10}
\end{array}
\end{gather*}
\end{minipage}
\\[1ex]
\begin{minipage}{6cm}
\mbox{\tiny$
  \left[
    \mbox{\tiny $
        \begin{array}{rrrrrrrrrrr}
    1 & 0 & 0 & 0 & 0 & 0 & 0 & 1 & 0 & -1 & 1 \\
    0 & 1 & 0 & 0 & 0 & 0 & 0 & 0 & 1 & 1 & -1 \\
    0 & 0 & 1 & 0 & 0 & 0 & 1 & -1 & 0 & 1 & 0 \\
    0 & 0 & 0 & 1 & 0 & 0 & 1 & -1 & 1 & 2 & -1 \\
    0 & 0 & 0 & 0 & 1 & 0 & 1 & 0 & -1 & 0 & 1 \\
    0 & 0 & 0 & 0 & 0 & 1 & -1 & 1 & 1 & 0 & 0
    \end{array}
    $}
    \right]  
$}
\end{minipage}
\end{center}
In this case, $X$ is the blow up of $\PP_2$ in 
the points $[1,0,0]$, $[0,1,0]$, $[0,0,1]$, $[1,1,0]$
and $[1,0,1]$.
\begin{center}
\begin{minipage}{1cm}
\tiny
\begin{tikzpicture}[scale=.22]
\coordinate (p1) at (-2,0);
\coordinate (p2) at (2,0);
\coordinate (p3) at (0,3);
\coordinate (gen) at (0,1.25);

\fill[color=black!25] (p1)--(p2)--(p3)--cycle;
\draw (p1)--(p2)--(p3)--cycle;
 
\fill (p1) circle (.20cm); 
\fill (p2) circle (.20cm);
\fill (p3) circle (.20cm);
\fill ($(p1)!.5!(p2)$) circle (.20cm);
\fill ($(p1)!.5!(p3)$) circle (.20cm);
\end{tikzpicture}
\end{minipage}
\end{center}

\item
The $\ZZ^6$-graded ring $\KT{10}/I$, where 
generators for $I$ and the degree matrix are given as
\begin{center}
\begin{minipage}{7cm}
\tiny
\begin{gather*}
\begin{array}{l}
T_{3}T_{5}T_{8}-T_{2}T_{6}-T_{9}T_{10},\ 
    T_{1}T_{5}+T_{7}T_{8}-T_{2}T_{4}T_{10}
\end{array}
\end{gather*}
\end{minipage}
\\[1ex]
\begin{minipage}{6cm}
\mbox{\tiny$
  \left[
    \mbox{\tiny $
    \begin{array}{rrrrrrrrrr}
    1 & 0 & 0 & 0 & 0 & 0 & 1 & 0 & -1 & 1 \\
    0 & 1 & 0 & 0 & 0 & 0 & -1 & 1 & 2 & -1 \\
    0 & 0 & 1 & 0 & 0 & 0 & 1 & -1 & 0 & 0 \\
    0 & 0 & 0 & 1 & 0 & 0 & 0 & 0 & 1 & -1 \\
    0 & 0 & 0 & 0 & 1 & 0 & 2 & -1 & -1 & 1 \\
    0 & 0 & 0 & 0 & 0 & 1 & -1 & 1 & 1 & 0
    \end{array}
    $}
    \right]  
$}
\end{minipage}
\end{center}
In this case, $X$ is the $2$-fold blow up of $\PP_2$ in 
$[1,0,0]$ of type $g_1$
and single blow ups in $[0,1,0]$, $[0,0,1]$ and $[1,1,0]$.
\begin{center}
\begin{minipage}{1cm}
\tiny
\begin{tikzpicture}[scale=.22]
\coordinate (p1) at (-2,0);
\coordinate (p2) at (2,0);
\coordinate (p3) at (0,3);
\coordinate (gen) at (0,1.25);

\fill[color=black!25] (p1)--(p2)--(p3)--cycle;
\draw (p1)--(p2)--(p3)--cycle;
 
\fill (p1) circle (.20cm) node[anchor=east]{$2$};
\fill (p2) circle (.20cm);
\fill (p3) circle (.20cm);
\fill ($(p1)!.5!(p2)$) circle (.20cm);
\end{tikzpicture}
\end{minipage}
\end{center}

\item
The $\ZZ^6$-graded ring $\KT{10}/I$, where 
generators for $I$ and the degree matrix are given as
\begin{center}
\begin{minipage}{7cm}
\tiny
\begin{gather*}
\begin{array}{l}
T_{1}T_{5}T_{10}-T_{2}T_{6}-T_{7}T_{8},\ 
T_{2}T_{4}T_{7}^{a-1}T_{8}^{a-2}-T_{3}T_{5}-T_{9}T_{10}
\end{array}
\end{gather*}
\end{minipage}
\\[1ex]
\begin{minipage}{6cm}
\mbox{\tiny$
  \left[
    \mbox{\tiny $
    \begin{array}{rrrrrrrrrr}
    1 & 0 & 0 & 0 & 0 & 0 & 0 & 0 & 1 & -1 \\
    0 & 1 & 0 & 0 & 0 & 0 & -a+1 & a & -1 & 1 \\
    0 & 0 & 1 & 0 & 0 & 0 & 1 & -1 & 1 & 0 \\
    0 & 0 & 0 & 1 & 0 & 0 & -1 & 1 & 0 & 0 \\
    0 & 0 & 0 & 0 & 1 & 0 & 1 & -1 & 2 & -1 \\
    0 & 0 & 0 & 0 & 0 & 1 & -a+2 & a-1 & -1 & 1
    \end{array}
    $}
    \right]  
$}
\end{minipage}
\end{center}
In this case, $X$ is the blow up of $\FF_a$ in 
 $[0,1,0,1]$, $[1,0,0,1]$, $[1,1,0,1]$ and $[0,1,1,1]$
 where $a\in \ZZ_{\geq 3}$.
 \begin{center}
 \begin{minipage}{1cm}
\tiny
  \begin{tikzpicture}[scale=.22]
    \fill[color=black!25] (0,0) rectangle (4,-2.5);
    
    \draw (0,0) -- (4,0);
    \draw (0,-2.5) -- (4,-2.5);
    \draw (4,0) -- (4,-2.5);
    \draw (0,0) -- (0,-2.5);
     
     \fill (0,0) circle (.2cm); 
     \fill (2,0) circle (.2cm);    
     \fill (4,0) circle (.2cm);  
     \fill (0,-1.25) circle (.2cm); 
   \end{tikzpicture}
\end{minipage}
 \end{center}

\item
The $\ZZ^6$-graded ring $\KT{10}/I$, where 
generators for $I$ and the degree matrix are given as
\begin{center}
\begin{minipage}{7cm}
\tiny
\begin{gather*}
\begin{array}{l}
T_{2}^{a}T_{4}-T_{3}T_{5}T_{6}^{2}T_{10}-T_{7}T_{8},\ 
T_{1}T_{2}^{a-1}T_{4}T_{8}-T_{3}T_{6}-T_{9}T_{10}  
\end{array}
\end{gather*}
\end{minipage}
\\[1ex]
\begin{minipage}{6cm}
\mbox{\tiny$
  \left[
    \mbox{\tiny $
        \begin{array}{rrrrrrrrrr}
    1 & 0 & 0 & 0 & 0 & 0 & 1 & -1 & 0 & 0 \\
    0 & 1 & 0 & 0 & 0 & 0 & 2a-1 & -a+1 & -a & a \\
    0 & 0 & 1 & 0 & 0 & 0 & -1 & 1 & 2 & -1 \\
    0 & 0 & 0 & 1 & 0 & 0 & 2 & -1 & -1 & 1 \\
    0 & 0 & 0 & 0 & 1 & 0 & 0 & 0 & 1 & -1 \\
    0 & 0 & 0 & 0 & 0 & 1 & -1 & 1 & 3 & -2
    \end{array}
    $}
    \right]  
$}
\end{minipage}
\end{center}
In this case, $X$ is the $3$-fold blow up of $\FF_a$, $a\in \ZZ_{\geq 3}$, in 
 $[0,1,0,1]$ of type $g_1s_1$ with the intersection point $s_1$ of the 1st 
 exceptional divisor and the transform of $V(T_3)\subseteq \FF_a$
 and a single blow up of $[0,1,1,1]$. 
 \begin{center}
 \begin{minipage}{1cm}
\tiny
  \begin{tikzpicture}[scale=.22]
    \fill[color=black!25] (0,0) rectangle (4,-2.5);
    
    \draw (0,0) -- (4,0);
    \draw (0,-2.5) -- (4,-2.5);
    \draw (4,0) -- (4,-2.5);
    \draw (0,0) -- (0,-2.5);
     
     \fill (0,0) circle (.2cm) node[anchor=east]{$3$};     
     \fill (0,-1.25) circle (.2cm); 
   \end{tikzpicture}
\end{minipage}
\end{center}

\item
The $\ZZ^6$-graded ring $\KT{10}/I$, where 
generators for $I$ and the degree matrix are given as
\begin{center}
\begin{minipage}{7cm}
\tiny
\begin{gather*}
\begin{array}{l}
T_{2}^{a}T_{4}-T_{3}T_{6}T_{10}T_{5}-T_{7}T_{8},\ 
T_{1}T_{2}^{a-1}T_{4}T_{7}T_{8}^{2}-T_{3}^{2}T_{5}-T_{9}T_{10}
\end{array}
\end{gather*}
\end{minipage}
\\[1ex]
\begin{minipage}{6cm}
\mbox{\tiny$
  \left[
    \mbox{\tiny $
    \begin{array}{rrrrrrrrrr}
    1 & 0 & 0 & 0 & 0 & 0 & 1 & -1 & 0 & 0 \\
    0 & 1 & 0 & 0 & 0 & 0 & 3a-1 & -2a+1 & -a & a \\
    0 & 0 & 1 & 0 & 0 & 0 & -2 & 2 & 3 & -1 \\
    0 & 0 & 0 & 1 & 0 & 0 & 3 & -2 & -1 & 1 \\
    0 & 0 & 0 & 0 & 1 & 0 & -1 & 1 & 2 & -1 \\
    0 & 0 & 0 & 0 & 0 & 1 & 0 & 0 & 1 & -1
    \end{array}
    $}
    \right]  
$}
\end{minipage}
\end{center}
In this case, $X$ is the $3$-fold blow up of $\FF_a$, $a\in \ZZ_{\geq 3}$, 
in $[0,1,0,1]$ of type $g_2s_1$ with the intersection point $s_1$ 
of the 1st exceptional divisor and the transform of $V(T_1)\subseteq\FF_a$
and a single blow up of $[0,1,1,1]$.
\begin{center}
\begin{minipage}{1cm}
\tiny
  \begin{tikzpicture}[scale=.22]
    \fill[color=black!25] (0,0) rectangle (4,-2.5);
    
    \draw (0,0) -- (4,0);
    \draw (0,-2.5) -- (4,-2.5);
    \draw (4,0) -- (4,-2.5);
    \draw (0,0) -- (0,-2.5);
     
     \fill (0,0) circle (.2cm) node[anchor=east]{$3$};     
     \fill (0,-1.25) circle (.2cm);      
   \end{tikzpicture}
\end{minipage}
\end{center}

\item
$\dagger$
The $\ZZ^6$-graded ring $\KT{9}/I$, where 
generators for $I$ and the degree matrix are given as
\begin{center}
\begin{minipage}{7cm}
\tiny
\begin{gather*}
\begin{array}{l}
 T_{1}T_{2}^{2a-1}T_{4}^{2}
 -
 T_{3}^{2}T_{5}
 -
 T_{1}T_{2}^{a-1}T_{3}T_{4}T_{5}T_{6}T_{7}T_{9}
 +
 T_{7}T_{8}T_{9}^{2}
\end{array}
\end{gather*}
\end{minipage}
\\[1ex]
\begin{minipage}{6cm}
\mbox{\tiny$
  \left[
    \mbox{\tiny $
    \begin{array}{rrrrrrrrr}
    1 & 0 & 0 & 0 & 1 & 0 & 0 & 3 & -1 \\
    0 & 1 & 0 & 0 & 2a-1 & 0 & 0 & 4a-3 & -a+1 \\
    0 & 0 & 1 & 0 & -2 & 0 & 0 & -2 & 1 \\
    0 & 0 & 0 & 1 & 2 & 0 & 0 & 4 & -1 \\
    0 & 0 & 0 & 0 & 0 & 1 & 0 & 2 & -1 \\
    0 & 0 & 0 & 0 & 0 & 0 & 1 & 1 & -1
    \end{array}
    $}
    \right]  
$}
\end{minipage}
\end{center}
In this case, $X$ is the $4$-fold blow up of $\FF_a$, $a\in \ZZ_{\geq 3}$, in 
 $[0,1,0,1]$ of type $g_3g_2s_1$ with the intersection point $s_1$ 
of the 1st exceptional divisor and the transform of $V(T_1)\subseteq\FF_a$.
\begin{center}
\begin{minipage}{1cm}
\tiny
  \begin{tikzpicture}[scale=.22]
    \fill[color=black!25] (0,0) rectangle (4,-2.5);
    
    \draw (0,0) -- (4,0);
    \draw (0,-2.5) -- (4,-2.5);
    \draw (4,0) -- (4,-2.5);
    \draw (0,0) -- (0,-2.5);
     
     \fill (0,0) circle (.2cm) node[anchor=east]{$4$};        
   \end{tikzpicture}
\end{minipage}
\end{center}

\item
$\dagger$
The $\ZZ^6$-graded ring $\KT{9}/I$, where 
generators for $I$ and the degree matrix are given as
\begin{center}
\begin{minipage}{7cm}
\tiny
\begin{gather*}
\begin{array}{l}
  T_{2}^{a}T_{4}
   -
   T_{3}T_{5}T_{6}^{2}
   -T_{7}T_{9}T_{1}T_{2}^{a-1}T_{4}T_{5}T_{6}
   +T_{7}T_{9}^{2}T_{8}
\end{array}
\end{gather*}
\end{minipage}
\\[1ex]
\begin{minipage}{6cm}
\mbox{\tiny$
  \left[
    \mbox{\tiny $
    \begin{array}{rrrrrrrrr}
    1 & 1 & 0 & -a & 0 & 0 & 0 & 0 & 0 \\
    -1 & 0 & -1 & 0 & 1 & 0 & 0 & 0 & 0 \\
    -1 & 0 & -2 & 0 & 0 & 1 & 0 & 0 & 0 \\
    -1 & 0 & 1 & 1 & 0 & 0 & 1 & 0 & 0 \\
    0 & 0 & 1 & 1 & 0 & 0 & 0 & 1 & 0 \\
    -1 & 0 & 2 & 2 & 0 & 0 & 0 & 0 & 1
    \end{array}
    $}
    \right]  
$}
\end{minipage}
\end{center}
In this case, $X$ is the $2$-fold blow up of $\FF_a$, $a\in \ZZ_{\geq 3}$, in 
 $[0,1,0,1]$ of type $s_1$ with
 the intersection point $s_1$ of the 1st exceptional divisor
 and the transform of $V(T_3)\subseteq\FF_a$
 and the $2$-fold blow up of $[0,1,1,1]$ of type $g_1$.
\begin{center}
\begin{minipage}{1cm}
\tiny
  \begin{tikzpicture}[scale=.22]
    \fill[color=black!25] (0,0) rectangle (4,-2.5);
    
    \draw (0,0) -- (4,0);
    \draw (0,-2.5) -- (4,-2.5);
    \draw (4,0) -- (4,-2.5);
    \draw (0,0) -- (0,-2.5);
     
     \fill (0,0) circle (.2cm) node[anchor=east]{$2$};     
     \fill (0,-1.25) circle (.2cm) node[anchor=east]{$2$};
   \end{tikzpicture}
\end{minipage}
\end{center}

\item
$\dagger\dagger$
The $\ZZ^6$-graded ring $\KT{11}/I$, where 
generators for $I$ and the degree matrix are given as
\begin{center}
\begin{minipage}{8cm}
\tiny
\begin{gather*}
\begin{array}{ll}
 T_7T_8 - T_2^aT_4T_6^{a-1}T_{11}^a + T_3T_5,&
  T_9T_{11} - T_{1}^{a}T_{4}T_{5}^{a}T_{8}^{a-1} - \kappa T_{6}T_{7},\\
    T_{10}T_{11} - T_{1}^{a}T_{4}T_{5}^{a-1}T_{8}^{a} + \kappa T_{3}T_{6},&
  -\kappa T_{2}^{a}T_{4}T_{6}^{a}T_{11}^{a-1} + T_{8}T_{9} - T_{5}T_{10},\\
  \multicolumn{2}{c}{
  T_{1}^{a}T_{2}^{a}T_{4}^{2}T_{5}^{a-1}T_{6}^{a-1}T_{8}^{a-1}T_{11}^{a-1}-T_{3}T_{9}-T_{7}T_{10}}
\end{array}
\end{gather*}
\end{minipage}
\\[1ex]
\begin{minipage}{9cm}
\mbox{\tiny$
  \left[
    \mbox{\tiny $
           \begin{array}{rrrrrrrrrrr}
    1 & 0 & 0 & 0 & 0 & 0 & 1 & -1 & 1 & 0 & 0 \\
    0 & 1 & 0 & 0 & 0 & 0 & 0 & 0 & 1 & 1 & -1 \\
    0 & 0 & 1 & 0 & 0 & a-1 & 0 & 1 & 2a-3 & 2a-2 & -a+2 \\
    0 & 0 & 0 & 1 & 0 & 1 & 0 & 0 & 2 & 2 & -1 \\
    0 & 0 & 0 & 0 & 1 & a-1 & 1 & 0 & 2a-2 & 2a-3 & -a+2 \\
    0 & 0 & 0 & 0 & 0 & a & -1 & 1 & 2a-2 & 2a-1 & -a+1
    \end{array}
    $}
    \right]  
$}
\end{minipage}
\end{center}
In this case, $X$ is the blow up of $\FF_a$, $a\in \ZZ_{\geq 3}$, in 
$[0,1,0,1]$, $[1,0,0,1]$, $[0,1,1,1]$, $[1,0,1,\kappa]$ 
where $\kappa\in \KK^*$.
\begin{center}
\begin{minipage}{.7cm}
\tiny
  \begin{tikzpicture}[scale=.22]
    \fill[color=black!25] (0,0) rectangle (4,-2.5);
    
    \draw (0,0) -- (4,0);
    \draw (0,-2.5) -- (4,-2.5);
    \draw (4,0) -- (4,-2.5);
    \draw (0,0) -- (0,-2.5);
     
     \fill (0,0) circle (.2cm); 
     \fill (4,0) circle (.2cm); 
     \fill (4,-1.25) circle (.2cm) node[anchor=west]{$\kappa$};
     \fill (0,-1.25) circle (.2cm);      
   \end{tikzpicture}
\end{minipage}
\end{center}
\end{enumerate}
Of the surfaces marked with a single $\dagger$ we do 
not know whether they admit a $\KK^*$-action.
The surface marked with $\dagger\dagger$ does not admit a non-trivial
$\KK^*$-action and the listed
ring is the Cox ring for $a\leq 15$, whereas for $a > 15$,
the surface is a Mori dream surface having
the $H_2$-equivariant normalization of 
$\KT{11}/I:(T_1\cdots T_{11})^\infty$
as its Cox ring.
\end{theorem}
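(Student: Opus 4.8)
The plan is to combine the classical structure theory of smooth rational surfaces with a systematic application of Algorithm~\ref{algo:latticeideal} and its variants. By the structure theorem recalled above, a smooth rational surface $X$ with $\varrho(X) = k \le 6$ is an iterated blow up of $\PP_2$ at $k-1$ points or of a Hirzebruch surface $\Fa$ at $k-2$ points. The first step is to enumerate, up to the action of $\Aut(\PP_2) = \mathrm{PGL}_3$, respectively $\Aut(\Fa)$, the combinatorial types of such centers: which chosen points are torus fixed, which lie on the lines, respectively fibres and negative sections, spanned by others, and what their infinitely near structure is. Over $\PP_2$ this yields a finite list; over $\Fa$ a finite list parametrised by $a \in \ZZ_{\ge 1}$, where the values $a \le 2$ give surfaces already contained in the $\PP_2$-list or among the toric and $\KK^*$-surfaces. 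Configurations whose blow up is toric have a polynomial Cox ring by~\cite{cox}, and those whose blow up carries a nontrivial $\KK^*$-action are treated in~\cite{Ha3,Hug,HaSu}; discarding these, and invoking~\cite{Ke} for $\varrho(X) \le 5$, leaves exactly $\b M_{0,5}$ when $\varrho(X) \le 5$ and exactly the fourteen configurations of the theorem when $\varrho(X) = 6$.

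For the remaining $\PP_2$-configurations, cases~(i)--(ix), I would start from the CEMDS given by $\PP_2$, with Cox ring $\KK[T_1,T_2,T_3]$, zero ideal and the standard complete fan, and perform the prescribed sequence of blow ups with Algorithm~\ref{algo:latticeideal}, encoding an infinitely near point as a point of the previously computed CEMDS lying on the relevant exceptional curve or strict transform, specified in Cox coordinates by its associated lattice ideal. Each successful run, certified by its dimension test $\dim(I_2 + \bangle{T_{r_2}}) > \dim(I_2 + \bangle{T_{r_2},T^\nu})$, produces a CEMDS describing the blow up; hence $X$ is a Mori dream space and the printed ring is $\Cox(X)$, with the stated class group and degree matrix. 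When one set of lattice-ideal generators does not suffice, Algorithm~\ref{algo:latticeideal2} is used to add generators in higher Rees degrees, exactly as in Example~\ref{ex:wpp345}; several of the listed ideals contain such higher-degree generators.

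The \emph{main obstacle} is the family of Hirzebruch cases~(x)--(xiv), since the algorithm cannot be executed separately for every $a \ge 3$. Instead, the construction of Algorithm~\ref{algo:latticeideal} is carried out symbolically: starting from the toric CEMDS of $\Fa$, whose data depends on $a$, the matrix $P_2 = [P_1',P_1'\cdot v]$, the stellar subdivision $\Sigma_2$, the pushed-forward equations $G_2'$ and the candidate generator set become explicit families depending polynomially on $a$, as displayed in the tables. One then verifies, uniformly in $a$, either the dimension test of Algorithm~\ref{algo:latticeideal} or directly the hypotheses of Theorem~\ref{thm:ambientblow}(i) --- that $R_2$ is normal and the $T_i$ are pairwise non-associated $K_2$-primes, which via Remark~\ref{rem:K1primcrit} and the fact that $K_2 \to K_1$ is split here reduces to the new variables and to dimension inequalities. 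These checks are carried out with Gr\"obner bases over the function field $\QQ(a)$, together with a finite list of separate verifications at the primes $a$ where leading coefficients degenerate. The genuinely delicate point is the surface marked $\dagger\dagger$: there the listed presentation $\KT{11}/I$ is normal --- hence equal to $\Cox(X)$ by Theorem~\ref{thm:ambientblow}(i) --- only for $a \le 15$, whereas for $a > 15$ one has to fall back on Algorithm~\ref{algo:fg}, whose dimension checks and the primeness of the exceptional variable $T_{r_2}$ still go through uniformly in $a$, yielding that $X$ stays a Mori dream surface with Cox ring the $H_2$-equivariant normalization of $\KT{11}/I:(T_1\cdots T_{11})^\infty$. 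Locating the exact threshold $a = 15$ and controlling the entire $a$-family of ideals --- rather than one ideal at a time --- is where the real work lies; for the two surfaces marked with a single $\dagger$ one would in addition have to decide whether the underlying configuration is stabilised by a one-parameter subgroup of $\Aut(\Fa)$, which we leave open.

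It remains to show that the fourteen items are pairwise non-isomorphic and that none of them --- apart from the undecided $\dagger$-cases --- admits a nontrivial $\KK^*$-action. For the first point I would compare the $\GL$-equivalence classes of the degree matrices together with the combinatorial shape of the defining ideals (the number of generators, the number of relations, and their multidegrees) and match these with the stated geometric descriptions. For the second, a nontrivial $\KK^*$-action would induce an extra $\ZZ$-grading on $\Cox(X)$ compatible with the $\Cl(X)$-grading, equivalently a one-parameter subgroup of $\Aut(\PP_2)$, respectively $\Aut(\Fa)$, stabilising the configuration; one checks directly that the listed configurations (other than the $\dagger$-ones) admit no such subgroup.
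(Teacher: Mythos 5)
Your proposal follows essentially the same route as the paper: a stepwise classification of the blow ups of $\PP_2$ and $\FF_a$ up to Picard number six, machine computation via Algorithms~\ref{algo:latticeideal} and~\ref{algo:latticeideal2} for the $\PP_2$-cases, a symbolic, uniform-in-$a$ execution of Algorithm~\ref{algo:latticeideal} for the Hirzebruch family (the paper works out case~(x) in full as a template, replacing the automated checks by Serre's criterion, Smith normal forms and explicit Gr\"obner bases valid for all $a$, and certifying the center via Lemma~\ref{lem:isblowup}), and Algorithm~\ref{algo:fg} for the $\dagger\dagger$-case. The one genuine methodological difference is how $\KK^*$-actions are excluded: you propose to show that no one-parameter subgroup of $\Aut(\PP_2)$ resp.\ $\Aut(\FF_a)$ stabilises the configuration, whereas the paper reads off the graph of negative curves and invokes the Orlik--Wagreich theory of $\KK^*$-surfaces, on which sink and source cannot meet --- a criterion checkable directly from the computed Cox ring data, and consistent with both treatments leaving the single-$\dagger$ cases undecided.
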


\begin{remark}
In Theorem~\ref{thm:pic6}, 
surface (vi) is a smooth del Pezzo surface of degree~4.
The surfaces $(iii)$, $(v)$, $(vii)$, $(viii)$
are weak del Pezzo surfaces of degree $4$ of singularity type
$A_3$, $A_1$, $2A_1$, $A_2$ respectively.
The number of generators and relations of their Cox rings
was given in~\cite[Table~6.2]{Der2}.
All other surfaces listed in the table of Theorem~\ref{thm:pic6}
contain $(-k)$-curves with $k\geq 3$
and therefore are not weak del Pezzo surfaces.
\end{remark}

\begin{remark}
The Cox ring generators occurring in Theorem~\ref{thm:pic6}, 
are either contractible curves on $X$ or they define curves in $\PP_2$ or,
in the case of a blown up $\FF_a$, in $\PP(1,1,a)$,
where their degrees are given as follows (the contractible ones 
are indicated by ``$-$''): 
\begingroup
\footnotesize
\begin{longtable}{ll}
\hline
surface & generator degrees in $\PP_2$ or $\PP(1,1,a)$
\\
\hline
$(i)$
&
$1, 1, 1, -, -, 2, -, -, 4, -$
\\
$(ii)$
&
$1, 1, 1, -, -, -, 1, -, 1, -$
\\
$(iii)$
&
$1, 1, 1, -, -, -, 1, -, 2, 2, -$
\\
$(iv)$
&
$1, 1, 1, -, -, -, 1, -, 2, -$
\\
$(v)$
&
$1, 1, 1, -, -, -, 1, 1, 1, -, 1, 1, -$
\\
$(vi)$
&
$1, 1, 1, -, -, -, 1, 1, 1, -, 1, 2, 1, 1, 1, -$
\\
$(vii)$
&
$1, 1, 1, -, -, -, 1, -, 1, 1, -$
\\
$(viii)$
&
$1, 1, 1, -, -, -, 1, -, 1, -$
\\
$(ix)$
&
$1, 1, a, -, -, -, 1, -, a, -$
\\
$(x)$
&
$1, 1, a, -, -, -, a, -, a, -$
\\
$(xi)$
&
$1,   1,   a,   -,   -,   -,   a,   -, 2a, -$
\\
$(xii)$
&
$1, 1, a, -, -, -, -, 2a, -$
\\
$(xiii)$
&
$1, 1, a, -, -, -, -, a, -$
\\
$(xiv)$
&
$1, 1, a, -, -, -, a, -, a, a, -$
\\
\hline
\end{longtable}
\endgroup 
\end{remark}

\begin{lemma}
\label{lem:isblowup}
Consider Setting~\ref{set:ambmod}.
Assume that $X_1 \subseteq Z_1$ 
is a CEMDS, $Z_2 \to Z_1$ 
arises from a barycentric subdivision
of a regular cone $\sigma \in \Sigma_1$
and $X_2 \to X_1$ has as center a point
$x \in X_1 \cap \TT^n \cdot z_\sigma$.
Let $f$ be the product over all $T_i$,
where $P_1(e_i) \not\in \sigma$,
and choose $z\in \KK^{r_1}$ with $p_1(z) = x$.
Then $X_2 \to X_1$ is the blow up 
at $x$ provided we have 
$$
 \bangle{
 T_i;\,z_i = 0
 }_f
 +
 I(P_1,z)_f
 \ =\ 
 \bangle{
 T_i;\,e_i\in \widehat\sigma
 }_f
 + 
 I(
 \overline{X_1}
 )_f
 \ \subseteq \ 
 \KK[T_1,\ldots,T_{r_1}]_{f}.
$$
\end{lemma}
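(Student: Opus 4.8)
The plan is to localize the question around $x$ by restricting to the affine toric chart cut out by $f$, and then to identify both $X_2\to X_1$ and the blow-up $\bl_x X_1$, over that chart, with the blow-up of $X_1$ in the reduced point $x$.

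First I would pass to the chart $Z_\sigma$. Since $\sigma$ is regular, a Cox coordinate point $z$ with $p_1(z)$ in the orbit $\TT^n\cdot z_\sigma$ has $z_i=0$ exactly for the indices with $e_i\in\widehat\sigma$, so that $\langle T_i;\, z_i=0\rangle=\langle T_i;\, e_i\in\widehat\sigma\rangle$; moreover $\{f\ne 0\}\subseteq Z_1$ is precisely the affine chart $Z_\sigma\cong\KK^k\times\TT^{\,n-k}$ with $k=\dim\sigma$, its unique closed orbit is $\TT^n\cdot z_\sigma\ni x$, and, $Z_\sigma$ lying in the smooth locus of $Z_1$, the morphism $p_1$ restricts to an $H_1$-torsor over $Z_\sigma$. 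Both $X_2\to X_1$ and $\bl_x X_1\to X_1$ are isomorphisms over $X_1\setminus\{x\}$, so it suffices to produce an isomorphism over the open neighbourhood $X_1\cap Z_\sigma$ of $x$ that is compatible with the maps to $X_1$, and then glue.

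Next, over $Z_\sigma$ the barycentric subdivision of $\sigma$ realizes $Z_2\to Z_1$ as the blow-up of $Z_1$ along the smooth centre $V(\sigma)$; since $X_1$ is irreducible and meets $\TT^n$, it is not contained in $V(\sigma)$, so its strict transform is $X_2=\bl_{\mathcal{I}_{V(\sigma)}\cdot\mathcal{O}_{X_1}}(X_1)=\bl_{X_1\cap V(\sigma)}(X_1)$ with $X_1\cap V(\sigma)$ taken scheme-theoretically. It then remains to check that $\mathcal{I}_{X_1\cap V(\sigma)}=\mathfrak{m}_x$ on $X_1\cap Z_\sigma$, and this is where the hypothesis enters. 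Under the torsor $p_1$, the ideal $\mathcal{I}_{V(\sigma)}$ pulls back to $\langle T_i;\, e_i\in\widehat\sigma\rangle$ localized at $f$, so $\mathcal{I}_{V(\sigma)}\cdot\mathcal{O}_{X_1}$ corresponds to $(\langle T_i;\, e_i\in\widehat\sigma\rangle+I(\overline{X_1}))_f$. On the other hand, as $x$ lies in the smooth locus, Lemma~\ref{lem:reeslem} gives $p_1^*\mathfrak{m}_x=\mathcal{I}_{p_1^{-1}(x)}$, and $p_1^{-1}(x)=\overline{H_1\cdot z}\cap\{f\ne 0\}$ is cut out by $I(P_1,z)_f$ because $I(P_1,z)$ is the vanishing ideal of the orbit closure $\overline{H_1\cdot z}$. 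Since $\langle T_i;\, e_i\in\widehat\sigma\rangle\subseteq I(P_1,z)$, the assumed equality reads $I(P_1,z)_f=(\langle T_i;\, e_i\in\widehat\sigma\rangle+I(\overline{X_1}))_f$, which by descent along the torsor says exactly $\mathfrak{m}_x=\mathcal{I}_{V(\sigma)}\cdot\mathcal{O}_{X_1}$ on $X_1\cap Z_\sigma$. Hence $X_2\cong\bl_x X_1$ over $Z_\sigma$, and together with the previous paragraph this proves the lemma.

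The main obstacle I anticipate is the scheme-theoretic bookkeeping in the last step: verifying that $\overline{H_1\cdot z}\cap\{f\ne 0\}$ is the \emph{reduced} fibre $p_1^{-1}(x)$ with no spurious components, exploiting the $H_1$-torsor property over the smooth chart $Z_\sigma$ to be sure that all pullbacks and descents along $p_1$ preserve ideal sheaves on the nose (so that $p_1^*$ identifies $\mathcal{I}_{V(\sigma)}$ with $\langle T_i;\, e_i\in\widehat\sigma\rangle_f$ and $\mathfrak{m}_x$ with $I(P_1,z)_f$), and recalling the precise form of the identity ``strict transform equals blow-up along the scheme-theoretic intersection'' for the closed embedding $X_1\subseteq Z_1$. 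Everything else is formal.
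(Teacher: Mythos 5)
Your proposal is correct and follows essentially the same route as the paper's proof: localize to the affine chart of the regular cone $\sigma$, reduce the claim to the identity $\mathfrak{m}_x=\mathcal{I}_{V(\sigma)}\cdot\mathcal{O}_{X_1}$ there (i.e.\ that the scheme-theoretic intersection of $X_1$ with the toric center is the reduced point $x$), and verify it by pulling back along $p_1$, over which chart the quotient is free, so that the two sides of the hypothesis are exactly $p_1^*(\mathfrak{m}_x)$ and $p_1^*(I(\TT^n\cdot z_\sigma))+p_1^*(I(X_{1,\sigma}))$. You spell out the ``strict transform equals blow-up of the restricted ideal sheaf'' step and the gluing away from $x$ more explicitly than the paper does, but the substance is identical.
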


\begin{proof}
Let $Z_{1,\sigma} \subseteq Z_1$ 
be the affine chart given by $\sigma$ and set
$X_{1,\sigma} := X_1 \cap Z_{1,\sigma}$. 
In order to see that the toric blow up $Z_2 \to Z_1$ 
induces a blow up $X_2 \to X_1$, we have to 
show 
$$ 
\mathfrak{m}_x
\ = \ 
I(\TT^n \cdot z_\sigma) + I(X_{1,\sigma})
\ \subseteq \ 
\Gamma(Z_{1,\sigma},\mathcal{O}).
$$
Consider the 
quotient map $p_1 \colon \rq{Z}_1 \to Z_1$.
Then we have $p_1^{-1}(Z_{1,\sigma}) = \KK^{r_1}_f$
and, since $\sigma$ is regular, 
$\Gamma(p_1^{-1}(Z_{1,\sigma}),\mathcal{O})$ admits 
units in every $K_1$-degree.
This implies 
\begin{align*}
&\qquad\qquad\qquad p_1^*(\mathfrak{m}_x)
\ = \ 
 \bangle{
 T_i;\,z_i = 0
 }_f
 +
I(P_1,z)_f,\\
&p_1^*(I(\TT^n \cdot z_\sigma))
\ = \ 
\bangle{T_i;\,e_i\in \widehat\sigma}_f,
\qquad\qquad
p_1^*(I(X_{1,\sigma}))
\ = \ 
I( \overline{X_1} )_f.
\end{align*}
Consequently, the assumption together with injectivity 
of the pullback map $p_1^*$ give the assertion.
\end{proof}

\begin{proof}[Proof of Theorem~\ref{thm:pic6}]
The idea is a stepwise classification of the Cox rings
for all smooth rational surfaces of Picard number
$\varrho(X) = 1,\ldots,6$.
For $\varrho(X) \leq 5$, it is possible to list all
occurring Cox rings up to isomorphism.
See~\cite{Ke} for the full list.
We treat exemplarily the surface $(x)$ in the table of the theorem,
which is obtained as blow up of a $\KK^*$-surface
$X_1$ with $\varrho(X_1) = 5$.
The Cox ring and degree matrix $Q_1$ of $X_1$ are
\[
\KK[T_1,\ldots,T_{8}]/\<
T_{2}^{a}T_{4}-T_{3}T_{5}T_{6}^{2}-T_{7}T_{8}
\>
,\quad
\mbox{\tiny $
\left[
\begin{array}{rrrrrrrr}
    1 & 1 & 0 & -a & 0 & 0 & 0 & 0 \\
    -1 & 0 & -1 & 0 & 1 & 0 & 0 & 0 \\
    -1 & 0 & -2 & 0 & 0 & 1 & 0 & 0 \\
    0 & 0 & 1 & 1 & 0 & 0 & 1 & 0 \\
    -1 & 0 & 1 & 1 & 0 & 0 & 0 & 1
    \end{array}
 \right]
 $}
 .
\]

The surface $X_2$ is the blow up of $X_1$ in the point 
with Cox coordinates $q := (1,1,1,1,0,1,1,1)\in \b X_1$.
To compute the Cox ring of $X_2$, 
we formally apply the steps of Algorithm~\ref{algo:latticeideal}.
In Setting~\ref{set:stretchcompress},
we choose the embedding
\[
 \b\iota\colon \KK^8\,\to\, \KK^{9}
 ,\qquad
 z\, \mapsto\, (z,f_1(z))
 ,\qquad
 f_1\ :=\ T_1T_2^{a-1}T_4T_8 - T_3T_6.
\]
The new degree matrix $Q_1'$ and the
matrix $P_1'$, whose columns are 
generators for the rays of the fan $\Sigma_1'$
of the ambient toric variety $Z_1'$, are
 \[
 Q_1' = 
 \left[
 Q_1
  \mbox{
  \tiny
  $
  \begin{array}{|r}
   0\\
   -1\\
   -1\\
   1\\
   1
  \end{array}
  $
  }
  \right]
  ,\quad
  P_1' = 
   \left[
  \mbox{
  \tiny
  $
  \begin{array}{rrrrrrrrr}
    1 & a-1 & 0 & 1 & 0 & 0 & 0 & 1 & -1 \\
    0 & a & 0 & 1 & 0 & 0 & -1 & -1 & 0 \\
    0 & 0 & 1 & 0 & 0 & 1 & 0 & 0 & -1 \\
    0 & 0 & 0 & 0 & 1 & 1 & -1 & -1 & 1
    \end{array}
  $
  }
  \right]
  .
 \]
To blow up the point on $X_1'$
 with Cox coordinates 
 $\b\iota(q) = (1,1,1,1,0,1,1,1,0)\in \b X_1'$,
 we perform the toric modification $\pi\colon Z_2\to Z_1'$
 given by the stellar subdivision $\Sigma_2 \to \Sigma_1'$
 at $v := (-1,0,-1,2)\in \ZZ^4$.
 Note that this completes the first threes steps of
 Algorithm~\ref{algo:latticeideal}.
 For the fourth one,
 we now formally apply Algorithm~\ref{algo:modifycemds}.
 Let $P_2 := [P_1',v]$.
 The ideal $I_2\subseteq \KT{10}$ 
 of $\b X_2$ is generated by
\begin{gather*}
  g_1
  \ :=\ 
  p_2^\sharp\,(p_1)_\sharp\,(T_{2}^{a}T_{4}-T_{3}T_{5}T_{6}^{2}-T_{7}T_{8})
  \,=\ 
 T_{2}^{a}T_{4}-T_{3}T_{5}T_{6}^{2}T_{10}-T_{7}T_{8}
 ,\\
 g_2
  \ :=\ 
   p_2^\sharp\,(p_1)_\sharp\,(T_{9} - f_1)
  \,=\ 
    T_{1}T_{2}^{a-1}T_{4}T_{8}-T_{3}T_{6}-T_{9}T_{10}.
\end{gather*}
We show that $I_2$ is prime; this implies in particular
that $I_2$ is saturated with respect to $T_{10}$.
On the open subset
\[
U \ :=\  
 \left\{
 x\in \overline X_2;\ 
 x_8x_{9}\not=0
 \ \,\text{or}\ \,
 x_7x_{10}\not=0 
 \right\}
 \ 
 \subseteq 
 \ 
 \b X_2
\]
the Jacobian $(\partial g_i/\partial T_j)_{i,j}$
is of rank two
and $U$ is a subset of the union of
the $8$-dimensional subspaces 
\[
 V\left(\KK^{10};\,T_{8},\,T_{7}\right),
    \quad
 V\left(\KK^{10};\,T_{8},\,T_{10}\right),
    \quad
 V\left(\KK^{10};\,T_{9},\,T_{7}\right),
    \quad
 V\left(\KK^{10};\,T_{9},\,T_{10}\right),
\]
all of which have six-dimensional intersection with
$\b X_2$.
Hence, $\overline X_2 \setminus U$ is of codimension
at least two in $\b X_2$.
Furthermore, since the effective cone of $X_2$ is pointed,
$\b X_2$ is connected.
By Serre's criterion
the ideal $I_2$ is prime, see e.g.~\cite{Kra}.

We claim that in $R_2 = \KT{10}/I_2$, 
the variable $T_{10}$ defines a prime element.
Instead of showing that 
$I_2 + \< T_{10}\>\subseteq\KT{10}$
is prime, removing non-used variables, 
we may show this for
$$
I_0
\,:=\,
\<
 T_{2}^{a}T_{4} - T_{6}T_{7},\ 
 T_{1}T_{2}^{a-1}T_{4}T_{7} - T_{3}T_{5}
\>\ \subseteq\ \KT{7}.
$$
Considered as an ideal in $\laurant{7}$, 
it is prime since the matrix with the exponents of the binomial
generators as its rows
\begin{align*}
     \left[
    \mbox{\tiny $
    \begin{array}{rrrrrrr}
    0 & a & 0 & 1 & 0 & -1 & -1 \\
    1 & a-1 & -1 & 1 & -1 & 0 & 1
    \end{array}
    $}
    \right]
\end{align*}
has Smith normal form $[E_2,0]$, 
where $E_2$ is the $2\times 2$ unit matrix.
Now, by Lemma~\ref{lem:satprops}, $I_0$ is prime if 
$I_0 = I_0 : (T_1\cdots T_7)^\infty$.
To this end, the set
$$
 \mathcal{G}
\ :=\ 
\left\{
T_{1}T_{6}T_{7}^{2}-T_{2}T_{3}T_{5},\ 
T_{2}^{a}T_{4}-T_{6}T_{7},\ 
T_{1}T_{2}^{a-1}T_{4}T_{7}-T_{3}T_{5}
\right\}    
\ \subseteq\ I_0
$$
turns out to be a Gr\"obner basis for $I_0$
with respect to the degree reverse lexicographical 
ordering for any ordering of the variables.
By~\cite[Lem.~12.1]{Stu}, we know that
\[
 \left\{
 \frac{f}{T_{i}^{k_i(f)}};\ 
 f\in \mathcal G
 \right\}
 \ =\ 
 {\mathcal G}
 ,\qquad
 k_i(f)
 \ :=\ 
 \max\bigl(
 n\in \ZZZ;\,T_{i}^n\mid f
 \bigr)
\]
is a Gr\"obner basis for $I_0 : T_{i}^\infty$ for any $i$.
As in~\cite[p.~114]{Stu}, the claim follows from
\[
 I_0 : (T_1\cdots T_7)^\infty
 \ =\ 
 \left(\left(\cdots (I_0 : T_1^\infty)\cdots\right) : T_7^\infty \right)
 \ =\ 
 I_0.
\]

Moreover,
no two variables $T_i$, $T_j$
are associated since $\deg T_i\ne \deg T_j$
for all $i$,~$j$
and $T_{10}\nmid T_{i}$ for all $i<10$ because 
$\dim \,\b X_2 \cap V(T_{i},\,T_{10})$ is at most six 
for each $i$. For instance,
$$
    \b X_2 \cap V(T_{5},\,T_{10})
    \ =\  
    V(
    T_{10},\,
    T_{5},\,
    T_{2}^{a}T_{4}-T_{7}T_{8},\,
    T_{1}T_{2}^{a-1}T_{4}T_{8}-T_{3}T_{6}
    )\ \subseteq\ \KK^{10}
$$
is of dimension six on $\TT^{10}\cdot (1,1,1,1,0,1,1,1,1,0)$
since its dimension equals $8 - \rank B$ with 
\[
B \ :=\  
 \left[
    \mbox{\tiny $
    \begin{array}{rrrrrrrrrr}
    0 & a & 0 & 1 & 0 & 0 & -1 & -1 & 0 & 0 \\
    1 & a-1 & -1 & 1 & 0 & -1 & 0 & 1 & 0 & 0
    \end{array}
    $}
    \right]
\]
having the exponents of the binomial equations as its rows.
Similarly, on the smaller tori, the dimension is at most six. 
By Theorem~\ref{thm:ambientblow} and Algorithm~\ref{algo:modifycemds},
$R_2$ is the Cox ring of the performed modification.
We claim that we performed the desired blow up.
The ideal
$$
 I'
 \ :=\  
 \bangle{
 T_5,\ T_9,\  f_1,\ T_2^aT_4 -T_7T_8
 }
 \  \subseteq\ 
 \KT{9}
$$
is prime since $I_0$ is prime,
$\b\iota(q)\in V(I')\subseteq \KK^{9}$
and, as seen above,
\[
\dim\, V\left(\KK^9;\,I'\right)
\ =\ 
-1\,+\,\dim\, \b X_2 \cap V\left(\KK^{10};\,T_5,\,T_{10}\right)
\ =\ 
5.
\]
By~\cite[Thm.~7.4]{MiSt}, as $K_1'$ is free,
$I(P_1',\b\iota(q))$ is prime.
This implies $I' = I(P_1',\b\iota(q))$.
Lemma~\ref{lem:isblowup} applies.
The Cox ring and degree matrix of $X_2$ 
are listed in the table under $(x)$.
Note that $X_2$ is not a $\KK^*$-surface:
by the blow up sequence, its
graph of negative curves contains the subgraph
 \begin{center}
  \includegraphics{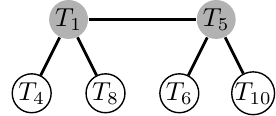}
 \end{center}
 where, by the theory of $\KK^*$-surfaces~\cite{OrWa}, 
 the curves corresponding to $T_1$ and $T_5$
  must correspond to the sink and source of the 
 $\KK^*$-action. 
 On a $\KK^*$-surface, sink and source must not meet. 
\end{proof}

\section{Linear generation}
\label{section:lineargen}
We consider the blow up $X$ of a projective 
space $\PP_n$ at $k$ distinct points 
$x_1, \ldots,x_k$, where $k > n+1 $.
Our focus is on special configurations in 
the sense that the Cox ring of $X$ is 
generated by the exceptional divisors and 
the proper transforms of hyperplanes.
We assume that $x_1, \ldots, x_{n+1}$ are 
the standard toric fixed points, i.e.~we have
$$ 
x_1 \ = \ [1,0,\ldots,0], 
\quad
\ldots,
\quad 
x_{n+1} \ = \ [0, \ldots, 0,1].
$$
Now, write $\mathcal{P} := \{x_1,\ldots, x_k\}$
and let $\mathcal{L}$ denote the set of all 
hyperplanes $\ell \subseteq \PP_n$ containing 
$n$ (or more) points of $\mathcal{P}$. 
For every $\ell \in \mathcal{L}$, we fix a linear 
form $f_{\ell} \in \KK[T_1, \ldots, T_{n+1}]$ with 
$\ell = V(f_\ell)$.
Note that the $f_\ell$ are homogeneous elements 
of degree one in the Cox ring of $\PP_n$.

The idea is now to take all $T_\ell$, where 
$\ell \in \mathcal{L}$, as prospective generators 
of the Cox ring of the blow up $X$ and then 
to compute the Cox ring using 
Algorithms~\ref{algo:stretchcemds}, 
\ref{algo:modifycemds} and~\ref{algo:compresscemds}.
Here comes the algorithmic formulation.

\begin{algorithm}[LinearBlowUp]
\label{algo:lineargen}
{\em Input: } a collection  
$x_1,\ldots, x_k \in \PP_n$ 
of pairwise distinct points.
\begin{itemize}
\item 
Set $X_1 := \PP_n$, let $\Sigma_1$ be the fan 
of $\PP_n$ and $P_1$ the matrix with columns 
$e_0, \ldots, e_n$, where $e_0 = - (e_1+ \ldots +e_n)$.
\item
Compute the set $\mathcal{L}$ of all 
hyperplanes through any $n$ of the 
points $x_1, \ldots, x_k$, let 
$(f_\ell; \; \ell \in \mathcal{L}')$ 
be the collection of the $f_\ell$ different 
from all $T_i$.
\item
Compute the stretched CEMDS $(P_1',\Sigma_1',G_1')$ 
by applying Algorithm~\ref{algo:stretchcemds} to 
$(P_1,\Sigma_1,G_1)$ and  $(f_\ell; \; \ell \in \mathcal{L}')$.
\item
Determine the Cox coordinates $z_i' \in \KK^{r_1'}$ 
of the points $x_i' \in X_1'$ corresponding 
to $x_i \in X_1$.
\item
Let $\Sigma_2$ be the barycentric subdivision 
of $\Sigma_1'$ at the cones $\sigma_i'$,
corresponding to the toric orbits containing
$x_i' = p_1'(z_i')$.
Write primitive generators for the rays of $\Sigma_2$
into a matrix $P_2 = [P_1,B]$.
\item
Compute $(P_2,\Sigma_2,G_2)$ 
by applying Algorithm~\ref{algo:modifycemds}
to $(P_1',\Sigma_1',G_1')$ and the pair $(P_2,\Sigma_2)$ 
\item
Set $(P_2',\Sigma_2',G_2') := (P_2,\Sigma_2,G_2)$.
Eliminate all fake relations by 
applying Algorithm~\ref{algo:compresscemds}
with option \texttt{verify}.
Call the output $(P_2,\Sigma_2,G_2)$. 
\end{itemize}
{\em Output: } $(P_2,\Sigma_2,G_2)$.
If the verifications in the last step
were positive, 
this is a CEMDS describing the blow up of
$\PP_n$ at the points $x_1, \ldots, x_k$; 
in particular the $K_2$-graded algebra $R_2$ 
is the Cox ring of~$X_2$.
\end{algorithm}

\begin{lemma}
\label{lem:lingenisblowup}
In Algorithm~\ref{algo:lineargen}
for each $x_i$
the barycentric subdivision of $\sigma_i'$
induces a blow up of $X_1'$ in $x_i'$.
\end{lemma}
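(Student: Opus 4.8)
The plan is to reduce the statement to a direct application of Lemma~\ref{lem:isblowup}. We are in the situation of Algorithm~\ref{algo:lineargen}, where $X_1' \subseteq Z_1'$ is a CEMDS (the output of Algorithm~\ref{algo:stretchcemds}), and for each input point $x_i \in \PP_n$ we have its Cox coordinates $z_i' \in \KK^{r_1'}$ and the minimal cone $\sigma_i' \in \Sigma_1'$ whose associated torus orbit contains $x_i' = p_1'(z_i')$. The barycentric subdivision of $\Sigma_1'$ at $\sigma_i'$ gives the toric modification $Z_2 \to Z_1'$, and we want to see that the induced map $X_2 \to X_1'$ is the blow up of $X_1'$ at the point $x_i'$. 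Since blowing up finitely many distinct points can be done one at a time, it suffices to treat a single point, so I would fix one $x_i$, drop the subscript, and write $\sigma$, $z$, $x = p_1'(z)$.

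First I would check the hypotheses of Lemma~\ref{lem:isblowup}. The cone $\sigma$ is a face of the simplicial fan of the toric variety $Z_1'$; I would argue that it is regular. Indeed, the ambient toric variety in the CEMDS is constructed via Gale duality from an almost free grading, and the relevant chart is smooth: the points $x_i$ we blow up lie in the smooth locus of $X_1' = \PP_n$ (a hyperplane blow-up configuration on the projective space itself before any modification), and since $X_1' \subseteq Z_1'$ is a CEMDS with $\Cl(X_1') = \Cl(Z_1')$ and the toric characteristic space $p_1'$ restricting to the characteristic space of $X_1'$, the relevant affine toric chart $Z_{1,\sigma}'$ must be smooth; hence $\sigma$ is a regular cone. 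The center of $X_2 \to X_1'$ is the point $x \in X_1' \cap \TT^{n'} \cdot z_\sigma$ by construction (this is exactly how $\sigma$ was selected — as the minimal cone with $x$ in the closure of the corresponding orbit, so $x$ lies in that orbit itself). The barycentric subdivision of $\sigma$ is in particular the stellar subdivision at the barycenter, which is the kind of modification Lemma~\ref{lem:isblowup} addresses. What remains is the ideal-theoretic identity
$$
\bangle{T_j;\ z_j = 0}_f + I(P_1',z)_f
\ = \
\bangle{T_j;\ e_j \in \widehat\sigma}_f + I(\b{X}_1')_f
\ \subseteq \
\KK[T_1,\ldots,T_{r_1'}]_f,
$$
where $f$ is the product of all $T_j$ with $P_1'(e_j) \notin \sigma$, and $I(P_1',z)$ is the associated ideal of the orbit closure $\b{H_1' \cdot z}$.

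The heart of the matter is that identity. I would exploit that $X_1' = \PP_n$ and that the stretching only added variables $T_\ell$ for the extra hyperplanes $\ell \in \mathcal{L}'$, each with $T_\ell = f_\ell$ a linear form in $T_1, \ldots, T_{n+1}$ already present in $R_1'$ as a fake relation. Concretely, on the localization $\KK[T_1,\ldots,T_{r_1'}]_f$, the ideal $I(\b{X}_1')_f$ contains all the relations $T_\ell - f_\ell(T_1,\ldots,T_{n+1})$, so modulo $I(\b{X}_1')_f$ every added variable is identified with a genuine linear form. The point $x = x_i$ corresponds to a specific choice of $n$ of the coordinates of the torus; the orbit $\TT^{n'} \cdot z_\sigma$ through which it is specified has $z_j = 0$ exactly for those $j$ with $e_j \in \widehat\sigma$, so the two "variable parts" $\bangle{T_j;\ z_j = 0}$ and $\bangle{T_j;\ e_j \in \widehat\sigma}$ agree — this part I expect to be immediate from how $\sigma$ was picked. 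The real content is then: modulo those vanishing variables and modulo $I(\b{X}_1')_f$, the binomial ideal $I(P_1',z)$ (which cuts out the $H_1'$-orbit closure of $z$ inside $\b{X}_1'$) already vanishes, i.e.\ $x$ is a single reduced point in the smooth chart and its maximal ideal is generated by the remaining coordinates. I expect this to follow from the fact that on a smooth affine toric chart the characteristic space is a trivial torus bundle in each graded degree, so the pullback of the maximal ideal of a point in the open orbit is exactly the binomial/lattice ideal described, together with the CEMDS relations; this is essentially the computation already performed inside the proof of Lemma~\ref{lem:isblowup}, so I would invoke that lemma and only need to supply the above verification.

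The main obstacle, as I see it, is the bookkeeping of the Gale-dual data: making sure that the cone $\sigma_i'$ chosen in the algorithm — the minimal cone of $\Sigma_1'$ whose orbit contains $x_i'$ — is indeed regular and that the linear forms $f_\ell$ behave well on the corresponding chart, i.e.\ that none of the $T_\ell$ that must be nonzero at $x_i'$ gets forced to vanish, and conversely. In the present setting ($X_1' = \PP_n$ blown up compatibly, all extra generators linear) this is manageable: $\PP_n$ is smooth, its fan is the regular fan of the simplex, and after stretching by linear forms the new ambient toric variety still has all the cones over the original faces regular; the extra rays through $B$ only subdivide. So I would phrase the proof as: (1) reduce to one point; (2) observe $\sigma$ is a regular cone of $\Sigma_1'$ since $X_1' \subseteq Z_1'$ is a CEMDS with smooth $X_1' = \PP_n$ and the chart through $x$ is smooth; (3) observe the center is the point $x \in \TT^{n'} \cdot z_\sigma$; (4) verify the ideal identity using $I(\b{X}_1')_f \ni T_\ell - f_\ell$ and the description of the orbit maximal ideal; (5) conclude by Lemma~\ref{lem:isblowup}.
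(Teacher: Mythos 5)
Your overall strategy --- reduce to one point and feed the situation into Lemma~\ref{lem:isblowup} --- is exactly the route the paper takes, and your steps (1), (3) and the observation that the two ``variable parts'' $\bangle{T_j;\,z_j=0}$ and $\bangle{T_j;\,e_j\in\widehat{\sigma}}$ coincide are fine. But the step you yourself call ``the real content'', namely the inclusion $I(P_1',z)_f \subseteq \bangle{T_j;\,e_j\in\widehat{\sigma}}_f + I(\b{X}_1')_f$, is not actually established: your justification (``the pullback of the maximal ideal of a point in the open orbit is exactly the binomial/lattice ideal described, together with the CEMDS relations'') is a restatement of the identity to be proved, not an argument for it. That identity is a genuine geometric condition which can fail --- it says precisely that $\b{X}_1'$ meets the coordinate subspace $V(T_j;\,e_j\in\widehat{\sigma}_i')$ in exactly the orbit closure $\b{H_1'\cdot z_i'}$, scheme-theoretically. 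The paper supplies the two missing inputs: (a) the easy containment $\b{X}_1'\cap V(T_j;\,e_j\in\widehat{\sigma}_i')\supseteq \b{H_1'\cdot z_i'}$ (the left side is $H_1'$-invariant and contains $z_i'$) is upgraded to equality by a dimension comparison, using that $x_i$ is cut out by $n$ hyperplanes of $\mathcal{L}$ --- this is where the normalization of $x_1,\dots,x_{n+1}$ as the standard toric fixed points enters, and without it the orbit attached to $\sigma_i'$ could meet $X_1'$ in something strictly larger than $x_i'$; and (b) the resulting set-theoretic equality is promoted to the required ideal identity because, after substituting the fake relations $T_\ell-f_\ell$, all ideals involved are generated by linear forms and hence radical. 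You mention (b) in passing, but (a) is absent from your sketch, and it is the actual content of the lemma.

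A second, smaller defect: your argument for the regularity of $\sigma_i'$ --- that $x_i'$ lies in the smooth locus of $X_1'$ and $X_1'\subseteq Z_1'$ is a CEMDS, ``hence the chart must be smooth'' --- is a non sequitur; a smooth subvariety can pass through a singular point of its toric ambient variety. The conclusion is nevertheless correct for an elementary reason: every generator $T_i$ and every $f_\ell$ has degree one in $\Cl(\PP_n)=\ZZ$, so $Q_1'$ sends each basis vector to $1$, the Gale dual $P_1'$ maps the standard basis to $r_1'$ vectors any $r_1'-1$ of which form a lattice basis, and $Z_1'$ is simply $\PP_{r_1'-1}$, all of whose fan cones are regular. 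With point (a) above and this correction supplied, your plan becomes the paper's proof.
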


\begin{proof}
In Algorithm~\ref{algo:lineargen},
let $G_1'= \{T_{n+1+j}-f_j;\,1\leq j\leq s\}$.
We have
\begin{align*}
\overline{X_1'}
\cap
V
\left(
T_j;\,e_j\in\widehat\sigma_i'
\right)
\ \supseteq\
V\left(
I(P_1',z_i')
\right)
=
\overline{H_1'\cdot z_i'}
\end{align*}
since the left hand side
is $H_1'$-invariant.
Equality is achieved by comparing
 dimensions and the fact that $x_i$
is cut out by $n$ hyperplanes.
Taking ideals, this implies that
$I(\overline{X_1'})
+
\bangle{
T_j;\,e_j\in\widehat{\sigma}_i'
}
$
equals
$I(P_1',z_i')$ because the ideals
are linear and thus radical.
Since $\sigma_i'$
is smooth, we may use Lemma~\ref{lem:isblowup}.
\end{proof}

\begin{proof}[Proof of Algorithm~\ref{algo:lineargen}]
By Lemma~\ref{lem:lingenisblowup}, $X_2\to X_1'$ is the blow up 
at~$x_1',\ldots,x_k'$.
It remains to show
that the input ring $R_2'$ of the last step is normal; 
this is necessary for Algorithm~\ref{algo:compresscemds}.
We only treat the case $k=1$.
Consider the stretched ring $R_1'$
obtained from the third step
and the ring $R_2$ obtained after the sixth step
\begin{align*}
 R_1'
 \ =\ 
 \KK[T_1,\ldots,T_{r_1'}]/\bangle{G_1'},
\qquad 
 R_2
 \ =\ 
 \KK[T_1,\ldots,T_{r_1'},T_{r_2}]/\bangle{G_2},
\end{align*}
where $T_{r_2}$ corresponds to the 
exceptional divisor.
We assume that of the $r_1'-r_1$ new 
equations $T_i - f_i$ in $\bangle{G_1'}$
the last $l$ will result in 
fake relations in $\bangle{G_2}$.
Localizing and passing to degree zero,
we are in the situation
\[
 \xymatrix{
 (R_1)_{T_1\cdots T_{r_1}}
 \ar[r]
 &
 (R_1)_{T_1\cdots T_{r_1}f_1\cdots f_{r_1'-l}}
 &
 (R_2)_{T_1\cdots T_{r_1'-l}T_{r_2}} 
 \\
  \left(
  (R_1)_{T_1\cdots T_{r_1}}
  \right)_0
 \ar[r]
 &
 \ar[u]
 \ar@{=}[r]
 \left(
 (R_1)_{T_1\cdots T_{r_1}f_1\cdots f_{r_1'-l}}
 \right)_0
 &
 \ar[u]
  \left(
 (R_2)_{T_1\cdots T_{r_1'-l}T_{r_2}} 
 \right)_0
 }
\]
The upper left ring is $K_1$-factorial by assumption.
By~\cite[Thm.~1.1]{Be}
the middle ring in the lower row
is a UFD and the ring on the 
upper right is $K_2$-factorial.
Thus, $R_2$ is $K_2$-factorial.
Since $K_1$ is free, also $K_2$ is,
so $R_2$ is a UFD. In particular, $R_2$ is normal and we may
apply Algorithm~\ref{algo:compresscemds}.
\end{proof}

Before eliminating fake relations, 
the ideal of the intersection of $\b{X}_2$ 
with the ambient big torus $\TT^{r_2}$ 
admits the following description in terms of 
incidences.

\begin{remark}
At the end of the fifth step in Algorithm~\ref{algo:lineargen},
the Cox ring of $Z_2$ is the polynomial 
ring $\KK[T_\ell,S_p]$ with indices
$\ell \in \mathcal{L}$ and $p \in \mathcal{P}$.
Consider the homomorphism
$$
\beta \colon 
\KK[T_\ell;\,\ell\in\mathcal{L}]
\ \to \
\KK[T_\ell,S_p;\,\ell\in\mathcal{L},\,p\in\mathcal{P}],
\qquad
T_\ell 
\ \mapsto \ 
T_\ell \cdot \prod_{p\in \ell} S_p.
$$
Then the extension of the ideal $I_2 \subseteq \KK[T_\ell,S_p]$ 
to the Laurent polynomial ring 
$\KK[T_\ell^\pm,S_p^\pm]$ is generated by 
$\beta(T_\ell - f_\ell)$, where $\ell \in \mathcal{L}'$.
\end{remark}

\begin{remark}\label{rem:linearspace} 
Some geometric properties of the blow 
up $X$ may be seen directly from the combinatorics
of the underlying \textit{finite linear space}
$\mathfrak{L}=(\mathcal{P},\mathcal{L},\in)$,
compare e.g.~\cite{finitelinearspaces}.
For instance, if $\mathfrak{L}$ is
an \textit{$n$-design}, i.e.~each line 
$\ell\in \mathcal{L}$ contains
exactly $n$ points, the ideal
of the Cox ring of $X$ is classically homogeneous.
Furthermore, for a surface $X$,
if all points but one lie on a common line,
i.e.~$\mathfrak{L}$ is a \textit{near-pencil},
then $X$ comes with a $\KK^*$-action.
It would be interesting to see further
relations between $X$ and~$\mathfrak{L}$.
\end{remark}

\begin{example}
\label{ex:almostfanoplane}
Let $X$ be the blow up of $\PP_2$
in the seven points 
\begin{center}
 \begin{minipage}{3cm}
  \begin{align*}
  &x_1 := [1,0,0],\quad 
  x_2 := [0,1,0],\quad
  x_3 := [0,0,1],\\
  &x_4 := [1,1,0],\quad 
  x_5 := [1,0,-1],\quad
  x_6 := [0,1,1],\\ 
  &x_7 := [1,1,1].
 \end{align*}
 \end{minipage}
 \ 
\begin{minipage}{3cm}
 \includegraphics{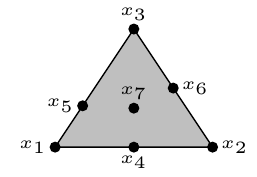}
\end{minipage}
\end{center}
Write $S_i$ for the variables 
corresponding to $x_i$ and
let $T_1,\ldots,T_9$ correspond
to the nine lines in $\mathcal{L}$.
Algorithm~\ref{algo:lineargen}
provides us with the Cox ring of $X$.
It is given as the factor ring
$\KK[T_1,\ldots,T_9,S_1,\ldots,S_{7}]/I$
where $I$ is generated by
{\tiny
\begin{alignat*}{2}
&{2}T_{8}S_4S_6-T_{5}S_2+T_{9}S_7,
&{2}T_{1}S_3S_6+T_{5}S_5-T_{6}S_7,\\
&{2}T_{4}S_1S_6+T_{6}S_2-T_{9}S_5,
&-T_{1}S_2S_6+T_{2}S_1S_5-T_{7}S_4S_7,\\
&{2}T_{7}S_3S_4+T_{6}S_2+T_{9}S_5,
&-T_{2}S_5S_3+T_{3}S_4S_2-T_{4}S_7S_6,\\
&{2}T_{3}S_1S_4+T_{5}S_5+T_{6}S_7,
&T_{1}S_2S_3+T_{8}S_4S_5+T_{4}S_1S_7,\\
&{2}T_{2}S_1S_3+T_{5}S_2+T_{9}S_7,
&T_{2}T_{6}S_3-T_{3}T_{9}S_4-T_{4}T_{5}S_6,\\
&T_{3}S_1S_2+T_{8}S_5S_6-T_{7}S_3S_7,
&T_{3}T_{9}S_1-T_{5}T_{7}S_3-T_{6}T_{8}S_6,\\
&T_{2}T_{6}S_1-T_{5}T_{7}S_4+T_{1}T_{9}S_6,
&T_{4}T_{5}S_1+T_{1}T_{9}S_3+T_{6}T_{8}S_4,\\
&T_{3}T_{7}S_4^{2}+T_{1}T_{4}S_6^{2}+T_{2}T_{6}S_5,
&T_{2}T_{7}S_3^{2}+T_{4}T_{8}S_6^{2}+T_{3}T_{9}S_2,\\
&T_{1}T_{2}S_3^{2}+T_{3}T_{8}S_4^{2}-T_{4}T_{5}S_7,
&T_{1}T_{3}S_2^{2}+T_{2}T_{8}S_5^{2}+T_{4}T_{7}S_7^{2},\\
&T_{3}T_{4}S_1^{2}+T_{1}T_{7}S_3^{2}-T_{6}T_{8}S_5,
&T_{2}T_{4}S_1^{2}+T_{7}T_{8}S_4^{2}-T_{1}T_{9}S_2,\\
&T_{2}T_{3}S_1^{2}+T_{1}T_{8}S_6^{2}+T_{5}T_{7}S_7,
&T_{4}T_{5}^{2}T_{7}+T_{2}T_{6}^{2}T_{8}+T_{1}T_{3}T_{9}^{2}
\end{alignat*}
}
and the $\ZZ^8$-grading is given by the degree matrix
{\tiny
\begin{align*}
\left[
 \begin{array}{rrrrrrrrrrrrrrrr}
0 & -1 & -1 & -1 & 0 & 0 & 0 & 0 & 0 & 1 & 0 & 0 & 0 & 0 & 0 & 0 \\
-1 & 0 & -1 & 0 & -1 & -1 & 0 & 0 & 0 & 0 & 1 & 0 & 0 & 0 & 0 & 0 \\ 
0 & 0 & -1 & 0 & 0 & 0 & 1 & 1 & 1 & 1 & 1 & 0 & 0 & 0 & 0 & 0 \\ 
-1 & -1 & 0 & 0 & 0 & 0 & -1 & 0 & 0 & 0 & 0 & 1 & 0 & 0 & 0 & 0 \\
0 & 0 & -1 & 0 & 0 & 0 & -1 & -1 & 0 & 0 & 0 & 0 & 1 & 0 & 0 & 0 \\ 
0 & 0 & 0 & 1 & -1 & 0 & 1 & 0 & 0 & 0 & 1 & 0 & 0 & 1 & 0 & 0 \\ 
-1 & 0 & 0 & -1 & 0 & 0 & 0 & -1 & 0 & 0 & 0 & 0 & 0 & 0 & 1 & 0 \\ 
0 & 1 & 0 & 0 & 0 & -1 & 0 & 1 & 0 & 0 & 1 & 0 & 0 & 0 & 0 & 1
 \end{array}
 \right].
 \end{align*}
}
\end{example}

The following theorem concerns blow ups of 
$\PP_3$ in six points $x_1,\ldots,x_6$.
As before, we assume that $x_1,\ldots,x_4$
are the standard toric fixed points.
We call the point configuration {\it edge-special\/} 
if at least one point of $\{x_5, x_6\}$ 
is contained in two different hyperplanes 
spanned by the other points.

\begin{theorem}
\label{thm:PP3}
Let $X$ be the blow up of $\mathbb{P}_3$
at distinct points $x_1,\dots,x_6$
not contained in a  hyperplane. 
Then $X$ is a Mori dream space.
Moreover, for the following typical 
edge-special configurations, we obtain:
 \begin{enumerate}
  \item 
  For $x_5 := [1,1,0,0]$, $x_6 := [0,1,1,1]$,
  the Cox ring of $X$ is $\KK[T_1,\ldots,T_{16}]/I$, 
  where $I$ is generated by
  
  \begin{center}
  \begin{minipage}{2cm}
  \includegraphics{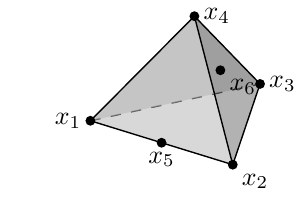}
  \end{minipage}
  \qquad\qquad
  \begin{minipage}{7cm}
   {\tiny
\begin{alignat*}{2}
&{2}T_{4}T_{13}-{2}T_{5}T_{16}-{2}T_{3}T_{14},
&T_{4}T_{12}T_{15}-T_{2}T_{14}-T_{6}T_{16},\\
&T_{5}T_{12}T_{15}-T_{6}T_{13}+T_{7}T_{14},
&T_{3}T_{12}T_{15}-T_{2}T_{13}-T_{7}T_{16},\\
&T_{5}T_{11}T_{12}-T_{9}T_{13}+T_{10}T_{14},
&T_{4}T_{11}T_{12}-T_{8}T_{14}-T_{9}T_{16},\\
&T_{3}T_{11}T_{12}-T_{8}T_{13}-T_{10}T_{16},
&T_{1}T_{12}T_{13}+T_{7}T_{11}-T_{10}T_{15},\\
&T_{1}T_{12}T_{14}+T_{6}T_{11}-T_{9}T_{15},
&T_{1}T_{12}T_{16}-T_{2}T_{11}+T_{8}T_{15},\\
&T_{5}T_{8}-T_{3}T_{9}+T_{4}T_{10},
&T_{2}T_{5}-T_{3}T_{6}+T_{4}T_{7},\\
&T_{1}T_{5}T_{12}^{2}+T_{7}T_{9}-T_{6}T_{10},
&T_{1}T_{3}T_{12}^{2}+T_{7}T_{8}-T_{2}T_{10},\\
&T_{1}T_{4}T_{12}^{2}+T_{6}T_{8}-T_{2}T_{9}
\end{alignat*}
}
  \end{minipage}
  \end{center} 
  with the $\ZZ^7$-grading given by the degree matrix 
  {\tiny
  \begin{align*}
   \left[
\begin{array}{rrrrrrrrrrrrrrrr}
1 & 1 & 1 & 1 & 1 & 1 & 1 & 1 & 1 & 1 & 0 & 0 & 0 & 0 & 0 & 0 \\ 
0 & -1 & -1 & -1 & -1 & -1 & -1 & 0 & 0 & 0 & 1 & 0 & 0 & 0 & 0 & 0 \\ 
-1 & 0 & -1 & -1 & -1 & 0 & 0 & 0 & 0 & 0 & 0 & 1 & 0 & 0 & 0 & 0 \\ 
-1 & -1 & 0 & -1 & 0 & -1 & 0 & -1 & -1 & 0 & 0 & 0 & 1 & 0 & 0 & 0 \\ 
0 & 0 & 0 & 1 & 1 & 1 & 0 & 0 & 1 & 0 & 0 & 0 & 0 & 1 & 0 & 0 \\ 
1 & 1 & 0 & 0 & 0 & 1 & 1 & 0 & 0 & 0 & 0 & 0 & 0 & 0 & 1 & 0 \\ 
0 & 1 & 1 & 1 & 0 & 0 & 0 & 1 & 0 & 0 & 0 & 0 & 0 & 0 & 0 & 1
\end{array}
\right].
\end{align*}
}
  \item 
  For $x_5 := [2,1,0,0]$, $x_6 := [1,1,0,1]$,
  the Cox ring of $X$ is $\KK[T_1,\ldots,T_{15}]/I$, 
  where $I$ is generated by
  
  \begin{center}
  \begin{minipage}{2cm}
    \includegraphics{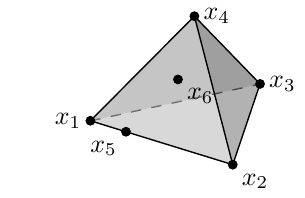}
  \end{minipage}
  \qquad\qquad
  \begin{minipage}{7cm}
   {\tiny
\begin{alignat*}{2}
&T_{1}T_{11}+T_{7}T_{14}+{2}T_{8}T_{15},
&T_{2}T_{10}+T_{7}T_{14}+T_{8}T_{15},\\
&T_{4}T_{11}T_{14}-T_{2}T_{13}-T_{5}T_{15},
&T_{4}T_{10}T_{14}-T_{1}T_{13}-T_{6}T_{15},\\
&T_{4}T_{10}T_{11}+T_{7}T_{13}-T_{9}T_{15},
&T_{6}T_{11}-{2}T_{8}T_{13}-T_{9}T_{14},\\
&T_{5}T_{10}-T_{8}T_{13}-T_{9}T_{14},
&{2}T_{4}T_{8}T_{10}+T_{6}T_{7}+T_{1}T_{9},\\
&T_{4}T_{8}T_{11}+T_{5}T_{7}+T_{2}T_{9},
&T_{4}T_{8}T_{14}+T_{1}T_{5}-T_{2}T_{6}
\end{alignat*}
}
\end{minipage}
\end{center} 
with the $\ZZ^7$-grading given by the degree matrix 
\[
\mbox{\tiny$
\left[
\begin{array}{rrrrrrrrrrrrrrr}
1 & 1 & 1 & 1 & 1 & 1 & 1 & 1 & 1 & 0 & 0 & 0 & 0 & 0 & 0 \\ 
0 & -1 & -1 & -1 & -1 & 0 & 0 & 0 & 0 & 1 & 0 & 0 & 0 & 0 & 0 \\ 
-1 & 0 & -1 & -1 & 0 & -1 & 0 & 0 & 0 & 0 & 1 & 0 & 0 & 0 & 0 \\ 
0 & 0 & 1 & 0 & 0 & 0 & 0 & 0 & 0 & 0 & 0 & 1 & 0 & 0 & 0 \\ 
-1 & -1 & -1 & 0 & 0 & 0 & -1 & -1 & 0 & 0 & 0 & 0 & 1 & 0 & 0 \\ 
1 & 1 & 0 & 0 & 1 & 1 & 0 & 1 & 0 & 0 & 0 & 0 & 0 & 1 & 0 \\ 
1 & 1 & 0 & 1 & 0 & 0 & 1 & 0 & 0 & 0 & 0 & 0 & 0 & 0 & 1
\end{array}
\right].
$}
\]
  \item  
  For $x_5 := [1,0,0,1]$, $x_6 := [0,1,0,1]$,
  the Cox ring of $X$ is $\KK[T_1,\ldots,T_{13}]/I$, 
  where $I$ is generated by
  \begin{center}
  \begin{minipage}{2cm}
    \includegraphics{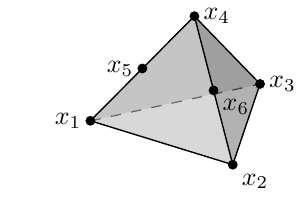}
  \end{minipage}
  \qquad\qquad
  \begin{minipage}{7cm}
   {\tiny
\begin{alignat*}{2}
&T_{2}T_{8}T_{11}-T_{6}T_{9}+T_{7}T_{13},
&T_{2}T_{11}T_{12}-T_{4}T_{9}+T_{5}T_{13},\\
&T_{1}T_{9}T_{11}-T_{5}T_{8}+T_{7}T_{12},
&T_{1}T_{11}T_{13}-T_{4}T_{8}+T_{6}T_{12},\\
&T_{1}T_{2}T_{11}^{2}-T_{5}T_{6}+T_{4}T_{7}
\end{alignat*}
}
\end{minipage}
\end{center} 
with the $\ZZ^7$-grading given by the degree matrix 
\[
\mbox{\tiny$
\left[
\begin{array}{rrrrrrrrrrrrr}
1 & 1 & 1 & 1 & 1 & 1 & 1 & 0 & 0 & 0 & 0 & 0 & 0 \\ 
0 & -1 & -1 & -1 & -1 & 0 & 0 & 1 & 0 & 0 & 0 & 0 & 0 \\ 
-1 & 0 & -1 & -1 & 0 & -1 & 0 & 0 & 1 & 0 & 0 & 0 & 0 \\ 
0 & 0 & 1 & 0 & 0 & 0 & 0 & 0 & 0 & 1 & 0 & 0 & 0 \\ 
-1 & -1 & -1 & 0 & 0 & 0 & 0 & 0 & 0 & 0 & 1 & 0 & 0 \\ 
1 & 0 & 0 & 1 & 1 & 0 & 0 & 0 & 0 & 0 & 0 & 1 & 0 \\ 
0 & 1 & 0 & 1 & 0 & 1 & 0 & 0 & 0 & 0 & 0 & 0 & 1
\end{array}
\right]
$}.
\]

  \item 
  For $x_5 := [1,0,0,1]$, $x_6 := [0,1,1,0]$,
  the Cox ring of $X$ is $\KK[T_1,\ldots,T_{12}]/I$, 
  where $I$ is generated by 
  \begin{center}
  \begin{minipage}{2cm}
 \includegraphics{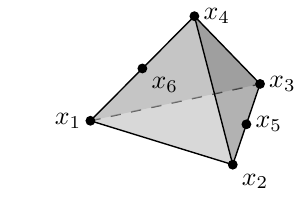}
  \end{minipage}
  \qquad
  \begin{minipage}{7cm}
   {\tiny
\begin{alignat*}{2}
&T_{3}T_{8}-T_{5}T_{12}-T_{2}T_{9},\\
&T_{4}T_{7}-T_{6}T_{11}-T_{1}T_{10}
\end{alignat*}
}
\end{minipage}
\end{center} 
with the $\ZZ^7$-grading given by the degree matrix 
\[
\mbox{\tiny$
\left[
\begin{array}{rrrrrrrrrrrr}
1 & 1 & 1 & 1 & 1 & 1 & 0 & 0 & 0 & 0 & 0 & 0 \\ 
0 & -1 & -1 & -1 & -1 & 0 & 1 & 0 & 0 & 0 & 0 & 0 \\ 
0 & 1 & 0 & 0 & 1 & 0 & 0 & 1 & 0 & 0 & 0 & 0 \\ 
0 & 0 & 1 & 0 & 1 & 0 & 0 & 0 & 1 & 0 & 0 & 0 \\ 
-1 & -1 & -1 & 0 & -1 & 0 & 0 & 0 & 0 & 1 & 0 & 0 \\ 
0 & -1 & -1 & 0 & -1 & -1 & 0 & 0 & 0 & 0 & 1 & 0 \\ 
0 & 1 & 1 & 0 & 0 & 0 & 0 & 0 & 0 & 0 & 0 & 1
\end{array}
\right]
$}.
\]
  \item 
  For $x_5 := [2,1,0,0]$, $x_6 := [1,2,0,0]$,
  the Cox ring of $X$ is $\KK[T_1,\ldots,T_{12}]/I$,
  where $I$ is generated by
  \begin{center}
  \begin{minipage}{2cm}
  \includegraphics{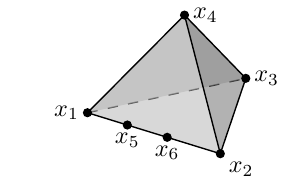}
  \end{minipage}
  \qquad
  \begin{minipage}{7cm}
   {\tiny
\begin{alignat*}{2}
&{3}T_{2}T_{7}+{2}T_{5}T_{11}+T_{6}T_{12},\\
&{3}T_{1}T_{8}+T_{5}T_{11}+{2}T_{6}T_{12}
\end{alignat*}
}
\end{minipage}
\end{center} 
with the $\ZZ^7$-grading given by the degree matrix 
\[
\mbox{\tiny$
\left[
\begin{array}{rrrrrrrrrrrr}
1 & 1 & 1 & 1 & 1 & 1 & 0 & 0 & 0 & 0 & 0 & 0 \\ 
0 & -1 & -1 & -1 & 0 & 0 & 1 & 0 & 0 & 0 & 0 & 0 \\ 
-1 & 0 & -1 & -1 & 0 & 0 & 0 & 1 & 0 & 0 & 0 & 0 \\ 
0 & 0 & 1 & 0 & 0 & 0 & 0 & 0 & 1 & 0 & 0 & 0 \\ 
0 & 0 & 0 & 1 & 0 & 0 & 0 & 0 & 0 & 1 & 0 & 0 \\ 
0 & 0 & -1 & -1 & -1 & 0 & 0 & 0 & 0 & 0 & 1 & 0 \\ 
1 & 1 & 0 & 0 & 1 & 0 & 0 & 0 & 0 & 0 & 0 & 1
\end{array}
\right]
$}.
\]
\end{enumerate}
\end{theorem}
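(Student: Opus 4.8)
The plan is to put the problem into the framework of Setting~\ref{set:ambmod} and to run the algorithms of Sections~\ref{sec:algos} and~\ref{section:lineargen}. First I would normalise: since $x_1,\dots,x_6$ span $\PP_3$, a projective transformation moves $x_1,\dots,x_4$ onto the toric fixed points, as assumed throughout Section~\ref{section:lineargen}, so that $x_5,x_6$ become arbitrary, subject only to all six points being distinct and spanning. The blow-up $Y$ of $\PP_3$ at $x_1,\dots,x_4$ is a smooth complete toric threefold, hence a CEMDS, and $X\to Y$ is the blow-up of the (at most two) smooth points $\tilde x_5,\tilde x_6\in Y$. In particular $\varrho(X)\le 7$ and $\Cl(X)\cong\ZZ^7$, and $-K_X=4H-2(E_1+\dots+E_6)$ with $(-K_X)^3=64-48=16$; moreover $-K_X\sim 2\tilde Q$, where $\tilde Q\sim 2H-E_1-\dots-E_6$ is the strict transform of a general quadric through the six points, the quadrics through those points forming a linear system of projective dimension at least three, so that $|\tilde Q|$ is big and hence $-K_X$ is big.

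For the Mori dream statement I would argue that $X$ is of Fano type and then quote finite generation of $\mathcal{R}(X)$ for varieties of Fano type (a consequence of the minimal model program). For configurations general enough this is direct: by adjunction $\tilde Q|_{\tilde Q}=-K_{\tilde Q}$, so $\tilde Q$ is nef on $X$ as soon as the degree-two weak del Pezzo surface $\tilde Q$ (a smooth quadric blown up in six points) has nef anticanonical class, and then $(X,(1-\varepsilon)\tilde Q)$ is klt with $-(K_X+(1-\varepsilon)\tilde Q)$ nef and big for small $\varepsilon>0$; this also recovers the general-position case already known by~\cite{CaTe,StuXu}. The remaining, more degenerate configurations I would organise by the combinatorial type of the finite linear space $\mathfrak{L}=(\mathcal{P},\mathcal{L},\in)$: the strata carrying a nontrivial torus action are settled through the toric description and the methods of~\cite{HaSu}, and each of the finitely many remaining types is handled either by producing a klt boundary as above or by running Algorithm~\ref{algo:latticeideal2} on $Y$ with the ideals of $\tilde x_5,\tilde x_6$, which then terminates and simultaneously delivers $\mathcal{R}(X)$.

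For the five displayed edge-special configurations I would apply Algorithm~\ref{algo:lineargen} verbatim: compute the set $\mathcal{L}$ of hyperplanes containing at least three of the six points and linear forms $f_\ell$ cutting them out; stretch the CEMDS $\PP_3$ by the $f_\ell\notin\{T_1,\dots,T_4\}$ via Algorithm~\ref{algo:stretchcemds}; read off the Cox coordinates $z_i'$ of the six points; perform the barycentric subdivisions at the cones of $\Sigma_1'$ whose relative interiors contain the $p_1'(z_i')$; apply Algorithm~\ref{algo:modifycemds}; and eliminate the fake relations with Algorithm~\ref{algo:compresscemds} under the \texttt{verify} option. By Lemma~\ref{lem:lingenisblowup} and the correctness of Algorithm~\ref{algo:lineargen}, a positive outcome of the \texttt{verify}-checks (the inequalities $\dim I_2-\dim(I_2+\langle T_i,T_j\rangle)\ge 2$, the $K_2$-primality of the variables, and normality of $R_2$) certifies that the resulting ring is $\mathcal{R}(X)$ and that $X$ is the asserted blow-up; a change of $\ZZ$-basis of $K_2\cong\ZZ^7$ together with a reordering of the variables then puts the output into the normal form listed under~(i)--(v).

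The hard part will be the uniformity of the Mori dream statement over \emph{all} six-point configurations not lying on a hyperplane: one must either isolate a single conceptual reason valid on every stratum or carry the stratification of $\mathfrak{L}$ through completely, and the naive Fano-type argument already breaks down for the most degenerate configurations (for instance when five of the points are coplanar, so that the anticanonical class of $\tilde Q$ is no longer nef), precisely where the stratification is needed. On the computational side, the bottleneck in~(i)--(v) is the \texttt{verify} step of Algorithm~\ref{algo:lineargen}: the saturations defining $I_2$ and, above all, the $K_2$-primality tests for the variables $T_i$, equivalently the transitivity of the characteristic quasitorus $H_2$ on the irreducible components of $V(I_2+\langle T_i\rangle)$. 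It is exactly the edge-special hypothesis---some $x_j$ lying on a coordinate line, hence in two of the spanning hyperplanes---that forces linear generation, i.e.\ that the proper transforms of hyperplanes together with the six exceptional divisors already generate $\mathcal{R}(X)$ with no additional generator such as the proper transform of a twisted cubic; verifying this is what makes the algorithm terminate positively.
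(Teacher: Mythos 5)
Your treatment of the five listed configurations coincides with the paper's: the second half of the theorem is exactly an application of Algorithm~\ref{algo:lineargen}, and your account of the stretch/modify/compress pipeline, of Lemma~\ref{lem:lingenisblowup}, and of why the edge-special hypothesis is what makes linear generation plausible is accurate.

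The gap is in the first, general statement. You correctly identify the paper's strategy --- $-K_X\sim 2D$ with $D=2H-E_1-\dots-E_6$ big and movable, so by \cite[Lemma 4.10]{MK} it suffices to show $X$ is log Fano --- but you then leave precisely the substance of that argument open, conceding that your boundary $(1-\varepsilon)\t{Q}$ fails as soon as the anticanonical class of the blown-up quadric stops being nef (which happens already for one collinear triple, since the quadric then contains the line) and that one must ``either isolate a single conceptual reason \dots or carry the stratification through completely.'' The paper does carry it through, and the two ingredients you are missing are exactly what make it work: first, a classification of all configurations into five cases according to the collinear subsets of $\{x_1,\dots,x_6\}$ (no collinear triple; one triple; two triples sharing a point; two disjoint triples; a collinear quadruple), together with, in each case, an explicit effective $\QQ$-divisor $\Delta$ with snc support and $\lfloor\Delta\rfloor=0$ assembled from the strict transforms $E_I=H-\sum_{i\in I}E_i$ of planes through collinear subsets and the quadric cones $Q_i=2H-2E_i-\sum_{k\ne i}E_k$; second, Proposition~\ref{mori}, an explicit computation of the Mori cone of $X$ (generated by lines in the exceptional divisors and strict transforms of lines through collinear subsets, proved via Lemma~\ref{base} and duality with the nef cone), which is what certifies that $A=-K_X-\Delta$ is ample. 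Note also that your stratification is by the combinatorics of $\LLL$ (incidences with hyperplanes), whereas the relevant invariant turns out to be collinearity alone. Your proposed fallback --- running Algorithm~\ref{algo:latticeideal2} stratum by stratum --- cannot close the gap either: the strata have moduli, the algorithm is only guaranteed to terminate when the answer is already affirmative, and finitely many runs do not cover a positive-dimensional family without a semicontinuity argument you do not supply.
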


Let $X$ be the blow up of $\mathbb{P}_3$
at six non-coplanar points $x_1,\dots,x_6$.
Denote by $H$ the total transform of 
a plane of $\mathbb{P}_3$ and by $E_i$
the exceptional divisor over $x_i$. 
Denote by $E_I := H-\sum_{i\in I}E_i$
and by $Q_i=2H-2E_i-\sum_{k\neq i}E_k$,
the last being the strict transform
of the quadric cone with vertex in $x_i$
and through the remaining five $x_j$.
We consider five possibilities for $X$
according to the collinear subsets
of $\{x_1,\dots,x_6\}$, modulo permutations
of the indices. The meaning of the divisor
$\Delta$ in the second column of the table
will be explained in the proof of 
Theorem~\ref{thm:PP3}.

\begin{center}
\tiny
\begin{longtable}{ll}
\hline
Collinear subsets & $\Delta$
\\
\hline
\\
& $\frac{1}{2}\left(Q_4+Q_5+Q_6+E_{123}\right)$
\\
\\
\hline
\\
$\{x_1,x_2,x_5\}$
&
$\frac{5}{8}\left(E_{1235}+E_{1256}+E_{134}+E_{246}+E_{456}\right)$
\\
\\
\hline
\\
$\{x_1,x_2,x_5\}$,
$\{x_1,x_3,x_6\}$
&
$\frac{2}{3} E_{12356}
+\frac{1}{2}(E_{1245}+E_{1346}+E_{234})$
\\
\\
\hline
\\
$\{x_1,x_2,x_5\}$,
$\{x_3,x_4,x_6\}$
&
$\frac{1}{3}(E_{1235}+E_{1346}+E_{2346})
+\frac{1}{2}(E_{1245}+E_{1256}+E_{3456})$
\\
\\
\hline
\\
$\{x_1,x_2,x_5,x_6\}$
&
$\frac{3}{4}(E_{12356}+E_{12456})
+\frac{1}{3}(E_{234}+E_{346})
+\frac{1}{4}E_{345}$
\\
\\
\hline
\end{longtable}
\end{center}

\begin{lemma}\label{base}
Let $X$ be a smooth projective variety,
let $D$ be an effective divisor of $X$
and let $C$ be an irreducible and reduced
curve of $X$ such that $C\cdot D<0$.
Then $C$ is contained in the stable
base locus of $|D|$.
\end{lemma}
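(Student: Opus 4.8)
The plan is to unwind the definition of the stable base locus and to invoke the elementary fact that an irreducible reduced curve meeting an effective Cartier divisor properly has nonnegative intersection number with it. Recall that
$$
\mathbb{B}(D)
\ := \
\bigcap_{m \ge 1} \mathrm{Bs}\,|mD|,
\qquad
\mathrm{Bs}\,|mD|
\ = \
\bigcap_{D' \in |mD|} \mathrm{Supp}(D'),
$$
and that, since $D$ is effective, $mD$ is effective and hence $|mD| \ne \emptyset$ for every $m \ge 1$. Thus it suffices to fix an arbitrary $m \ge 1$ and an arbitrary effective divisor $D' \in |mD|$ and to show $C \subseteq \mathrm{Supp}(D')$; then $C \subseteq \mathrm{Bs}\,|mD|$ for all $m$, and therefore $C \subseteq \mathbb{B}(D)$.

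First I would argue by contradiction, assuming $C \not\subseteq \mathrm{Supp}(D')$. Since $X$ is smooth, $D'$ is a Cartier divisor, so $C \cdot D' = \deg\bigl(\mathcal{O}_X(D')|_C\bigr)$ is a well-defined intersection number. As $C$ is irreducible and reduced and is not contained in $\mathrm{Supp}(D')$, the set $C \cap \mathrm{Supp}(D')$ is finite and a local equation of $D'$ restricts to a nonzero section of $\mathcal{O}_X(D')|_C$ vanishing exactly on this finite set; hence $C \cdot D' \ge 0$. On the other hand, $D' \sim mD$ gives $C \cdot D' = m\,(C \cdot D) < 0$ by hypothesis, a contradiction. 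Therefore $C \subseteq \mathrm{Supp}(D')$, as required.

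I do not expect a genuine obstacle here: the argument is essentially formal once the definition of $\mathbb{B}(D)$ is in place. The only point deserving a line of care is the standard observation that an irreducible reduced curve not contained in an effective Cartier divisor meets it in nonnegative degree, which follows immediately from the existence of the nonzero section of the restricted line bundle described above; everything else is bookkeeping with the definition of the stable base locus and with linear equivalence $D' \sim mD$.
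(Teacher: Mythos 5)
Your proof is correct and is essentially the same argument as the paper's: the paper's entire proof is the one-line observation that a prime divisor not containing $C$ meets it nonnegatively, which is exactly the fact you isolate and then apply to every member of $|mD|$. You have merely written out the bookkeeping with the definition of the stable base locus that the paper leaves implicit.
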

\begin{proof}
The statement follows from the observation
that if $E$ is a prime divisor not containing
$C$ then $C\cdot E\geq 0$.
\end{proof}

\begin{proposition}\label{mori}
Let $X$ be the blow up of $\mathbb{P}_3$ at six
non-coplanar points $x_1,\dots,x_6$.
Denote by $e_I$ with $I\subset\{1,\dots,6\}$
the class of the strict transform of the line through
the points $\{x_i;\, i\in I\}$, if $|I|\geq 2$, or
the class of a line in the exceptional divisor
$E_i$ over $x_i$, if $I=\{i\}$.
Then the Mori cone of $X$ is
\[
 {\rm NE}(X)
 =
 \langle e_I;\, I\subset\{1,\dots,6\},
 |I| = 1 \text{ or the set $\{x_i;\, i\in I\}$ is contained in a line}\rangle.
\]
\end{proposition}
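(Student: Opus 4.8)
The plan is to establish the two inclusions separately. The easy inclusion ``$\supseteq$'' is immediate: each $e_I$ as described is the class of an irreducible curve on $X$ (a strict transform of a genuine line through the listed points, or a line inside an exceptional $\PP^2$), hence lies in $\mathrm{NE}(X)$; since $\mathrm{NE}(X)$ is a cone, the cone generated by these classes is contained in it. The content is the reverse inclusion ``$\subseteq$'', and for this I would argue that every irreducible reduced curve $C\subseteq X$ has class lying in the cone $\sigma:=\langle e_I;\ |I|=1\text{ or }\{x_i\}_{i\in I}\text{ collinear}\rangle$. Write $\varrho(X)=7$, with basis $H,E_1,\dots,E_6$ of $\Cl(X)$, and let $d:=H\cdot C\ge 0$ be the degree of the image of $C$ in $\PP_3$ and $m_i:=E_i\cdot C\ge 0$ the multiplicity of the image curve at $x_i$ (strictly speaking, the intersection with the exceptional divisor; if $C$ lies in an exceptional $E_i$ then $C=e_i$ and we are done). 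So assume $C$ is not contained in any exceptional divisor.

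The main step is a case distinction on $d$. If $d=0$ then $C$ maps to a point, so $C\subseteq E_i$ for some $i$, contradiction (or handled above). If $d=1$, the image is a line $\ell\subseteq\PP_3$; it passes through some subset $\{x_i\}_{i\in I}$ of the six points, and collinearity of that subset forces $I$ to be one of the admissible index sets (a subset of a collinear subset), so $[C]=e_I$ directly, or $[C]=e_\emptyset=H$ which is a nonnegative combination $H=e_I+\sum_{i\in I}e_i$ of admissible generators. If $d\ge 2$, the strategy is to subtract, from $[C]$, an effective class supported on the admissible generators so as to strictly decrease $d$, then induct. Concretely, I would use the bend-and-break / general-position philosophy made rigorous here by the explicit divisor $\Delta$ appearing in the table preceding the proposition: in each of the five collinearity types one exhibits an effective divisor $\Delta$ (a positive rational combination of the $E_I$'s and $Q_i$'s, i.e.\ of strict transforms of planes and quadric cones through subsets of the $x_i$) such that $-K_X-\Delta$ is nef, or more precisely such that the finitely many classes $e_I$ are exactly the classes $C$ with $(-K_X)\cdot C$ minimal; then Lemma~\ref{base} applied to the relevant linear systems shows that any $C$ with $[C]\notin\sigma$ would have to be forced into the stable base locus of some effective linear system whose base locus is a known finite union of the lines $e_I$, giving a contradiction unless $[C]$ is one of them. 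The role of $\Delta$ is precisely to witness, via $C\cdot D<0$, which curves are ``rigid'': one checks $\Delta\cdot e_I<0$ only for the admissible $I$.

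The technically delicate point — the one I expect to be the main obstacle — is verifying, case by case through the five configurations in the table, that the listed cone $\sigma$ is actually closed under the relevant extremal contractions and that no further generator is needed; equivalently, that the dual description via the $\Delta$'s and the quadrics $Q_i$ really does account for all $(-1)$- and $(-2)$-type curves that can appear after the (possibly special) blow-up. This amounts to: (1) enumerating all lines through subsets of $\{x_1,\dots,x_6\}$ allowed by the prescribed collinearities, (2) computing the intersection numbers $\Delta\cdot e_I$ and $Q_i\cdot e_I$ and checking signs, and (3) confirming that the resulting finite set of rays spans a full-dimensional cone whose defining inequalities are exactly ``$D\cdot(-)\ge 0$'' for the effective divisors in play. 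Steps (1)--(3) are finite linear-algebra checks over $\ZZ$ (intersection form on the blow-up of $\PP_3$ at six points), so while laborious they are routine; the conceptual work is only in producing the correct $\Delta$ for each type, which is supplied by the table. Once $\mathrm{NE}(X)$ is known to be this rational polyhedral cone, finite generation of the Cox ring (hence the Mori dream property asserted in Theorem~\ref{thm:PP3}) and the explicit presentations follow from the algorithms of Sections~\ref{sec:ambmod}--\ref{section:latticeideal}.
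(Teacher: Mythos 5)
Your inclusion ``$\supseteq$'' and the low-degree cases $d\le 1$ are fine, but the heart of the matter --- showing that \emph{every} irreducible curve of degree $d\ge 2$ has class in the candidate cone $\sigma$ --- is not established. The inductive step you propose (``subtract from $[C]$ an effective class supported on the admissible generators so as to strictly decrease $d$'') is exactly the assertion to be proved, and it is not justified: there is no a priori reason that the class of an irreducible curve of degree $\ge 2$ decomposes as an admissible generator plus another effective class, and bend-and-break does not supply such a decomposition here. Your fallback via $\Delta$ rests on a misreading of the paper's structure: the divisors $\Delta$ in the table are used only in the proof of Theorem~\ref{thm:PP3} to exhibit $-K_X\sim A+\Delta$ and conclude that $X$ is log Fano; they play no role in the Mori cone computation, and the claim that ``the $e_I$ are exactly the classes with $(-K_X)\cdot C$ minimal'' is neither correct nor sufficient to pin down $\mathrm{NE}(X)$.

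The paper's actual argument runs in the dual direction and is the route you only gesture at in your last paragraph. Since $\mathrm{Nef}(X)=\mathrm{NE}(X)^\vee$, the inclusion $\sigma\subseteq\mathrm{NE}(X)$ gives $\sigma^\vee\supseteq\mathrm{Nef}(X)$, so it suffices to show that every extremal ray of the explicitly computable rational polyhedral cone $\sigma^\vee$ is nef; then $\sigma^\vee=\mathrm{Nef}(X)$ and dualizing back yields $\sigma=\overline{\mathrm{NE}(X)}$. A finite computation identifies each such extremal ray with the class of an effective divisor $D=\sum_{k\in S}E_{I_k}$, a sum of strict transforms of planes through collinear subsets; the base locus of $|D|$ is therefore contained in the union of the corresponding lines $e_{I_k}$, and Lemma~\ref{base} reduces nefness of $D$ to checking $D\cdot e_{I_k}\ge 0$, which is again a finite verification. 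If you reorganize your argument around this duality step --- verify nefness of the extremal rays of $\sigma^\vee$ rather than decompose arbitrary curve classes --- the remaining work is precisely the finite linear algebra you list in items (1)--(3), and the proof closes.
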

\begin{proof}
We consider five cases for $X$ according to
the collinear subsets of $\{x_1,\dots,x_6\}$.
In each case we denote by $\mathcal{E}$ be the cone
spanned by the classes of the $e_I$ defined
in the statement.
Recall that, via the intersection form between 
divisors and curves, the nef cone ${\rm Nef}(X)$ is
dual to the closure of the Mori cone ${\rm NE}(X)$
~\cite[Proposition 1.4.28]{Laz}.
Hence $\mathcal{E}\subset {\rm NE}(X)$
gives $\mathcal{E}^\vee
\supset {\rm NE}(X)^\vee= {\rm Nef}(X)$. 
Thus, it is enough to show that each
extremal ray of $\mathcal{E}^\vee$ is a nef class.
A direct calculation shows that the primitive 
generator $w$ of an extremal ray
is the strict transform of one of the following 
divisors (here, by ``points'' we mean a subset 
of $\{x_1,\dots,x_6\}$):
\begin{itemize}
\item a plane through one simple point,
\item a quadric through simple points,
\item a cubic with one double point and 
simple points,
\item a quartic with one triple point and
simple points.
\end{itemize}
In each case, $w$ is the class of a divisor
$D = \sum_{k\in S} E_{I_k}$, where 
$I_k\subset\{x_1,\dots,x_6\}$ 
is a collinear subset for any $k\in S$,
where $S$ is a finite set of indices.
Hence the base locus of $|D|$ is contained
in the union of the strict transforms
of the lines spanned by each 
subset $\{x_i;\, i \in I_k\}$.
Again, a direct calculation shows that
$D\cdot e_{I_k}\geq 0$ for any $k\in S$
and thus $D$ is nef by Lemma~\ref{base}.
\end{proof}

\begin{proof}[Proof of Theorem~\ref{thm:PP3}]
We prove the first statement.
The anticanonical divisor $-K_X$ of $X$ 
is big and movable since $-K_X\sim 2D$,
where $D=2H-E_1-\dots-E_6$ is the strict 
transform of a quadric through the six
points.
Hence, $X$ is Mori dream if and only
if it is log Fano by~\cite[Lemma 4.10]{MK}.
We will prove that $X$ is log Fano by showing 
that
\[
 -K_X\sim A+\Delta,
\]
where both $A$ and $\Delta$ are
$\mathbb{Q}$-divisors, $A$ is nef and big, 
$\Delta$ is effective, its support is simple 
normal crossing and $\lfloor\Delta\rfloor=0$.
In the above table we provide $\Delta$ 
in each case. The ampleness
of $A$ is a direct consequence of
our description of the Mori cone 
of $X$ given in Proposition~\ref{mori}.
The second part of the theorem
is an application of 
 Algorithm~\ref{algo:lineargen}.
\end{proof}


\bibliographystyle{abbrv}

\end{document}